\begin{document}

\title[Reaction--diffusion model]{Exponentially slow mixing and hitting times of rare events for a reaction--diffusion model}
\author[K. Tsunoda]{Kenkichi Tsunoda}
\address{Department of Mathematics, Osaka university, Toyonaka, Osaka 560-0043, JAPAN and RIKEN Center for Advanced Intelligence Project (AIP), Tokyo, JAPAN}
\email{k-tsunoda@math.sci.osaka-u.ac.jp}
\subjclass[2010]{Primary 82C22, secondary 60F10, 82C35.}
\keywords{Hydrodynamic limit, large deviations, mixing time.}
\date{\today}

\maketitle

\begin{abstract}
We consider the superposition of symmetric simple exclusion dynamics speeded-up in time,
with spin-flip dynamics in a one-dimensional interval with periodic boundary conditions.
We show that the mixing time has an exponential lower bound in the system size
if the potential of the hydrodynamic equation has more than two local minima.
We also apply our estimates to show that the normalized hitting times of rare events
converge to a mean one exponential random  variable if the potential has a unique minimum.
\end{abstract}

\section{Introduction}\label{sec1}
In this paper, we study the superposition of symmetric simple exclusion dynamics speeded-up in time,
with spin-flip dynamics in a one-dimensional interval with periodic boundary conditions.
We call this model the {\it reaction--diffusion model}\footnote{This model is also known as {\it Glauber+Kawasaki model}.}.
De Masi, Ferrari, and Lebowitz in \cite{MR857069} have introduced this model
to study a reaction--diffusion equation of the form
\begin{align}\label{sec1hdleq}
\partial_t\rho=(1/2)\Delta\rho - V'(\rho),
\end{align}
where $V$ is a potential from a stochastic microscopic systems viewpoint.
They showed the {\it hydrodynamic limit}, that is, the macroscopic density of the reaction--diffusion model
evolves according to the reaction--diffusion equation \eqref{sec1hdleq}, under diffusive scaling. 
We refer  to \cite{MR1175626} and \cite[Subsection 3.1]{MR3956160} and the references therein
for the recent development of the reaction--diffusion model.

This paper is a continuation of our studies \cite{MR3765880,MR3947320,tanaka2020glauberexclusion}
and  we use several results established in these papers.
We have studied the hydrostatic limit and the dynamical large deviation principle in \cite{MR3765880},
the static large deviation principle in \cite{MR3947320},
and rapid mixing in \cite{tanaka2020glauberexclusion}.
More precisely, in \cite{tanaka2020glauberexclusion}, we have shown that
the total variation mixing time is of the order $\log N$ ($N$ is the system size)
if the reaction--diffusion model is {\it attractive} and the hydrodynamic equation \eqref{sec1hdleq}
has a strictly convex potential.
Therefore, it is natural to ask what happens when the potential $V$ has more than two local minima.

We first consider the case where the potential $V$ has more than two local minima
and show that the total variation mixing time is bounded below by $e^{cN}$ for some constant $c$
for any $N$ sufficiently large. In particular, for the case of the original model introduced in \cite{MR857069},
our result and rapid mixing established in \cite{tanaka2020glauberexclusion}
imply a phase transition for the mixing time. Namely, if an inverse temperature of the system
is larger than some critical temperature,
the reaction--diffusion model exhibits exponentially slow mixing, otherwise rapid mixing.
Note that this type of phase transition cannot be observed
for the Glauber dynamics related to the Ising model
on the one-dimensional periodic domain \cite[Theorem 15.5]{MR3726904}.
This is surprising because our particle system interacts only through nearest neighbors,
and the dimension of the underlying system is one.

Using hitting time estimates, which will be established in this paper,
we study the hitting times of rare events for the reaction--diffusion model when the potential has a unique minimum.
As the second main result, we show that the hitting time of an open set,
which does not contain a unique minimum of the potential,
converges to a mean one exponential random variable.
We note that the techniques developed in this paper are robust enough to apply to other models,
including boundary--driven exclusion processes \cite{MR1997915,MR2052137,farfan2009static,MR2770905}.

We  mention several papers related to this work.
Our motivation to study this problem originates from two recently developed theories.
One is the {\it macroscopic fluctuation theory} and the other is the {\it martingale approach to metastability}.
For details of each theory, see survey papers \cite{MR3403268} and \cite{MR3960293}, respectively.
This paper combines these two theories following the Freidlin--Wentzell theory \cite{MR1652127}.
Apparently, the mixing time for the exclusion process or Glauber dynamics is related to the problem we consider.
Lacoin et al. have extensively studied the mixing time for the exclusion process
\cite{MR2869447,MR3474475,MR3551201,MR3689972,MR3945753,MR4132639}.
The mixing time for the Glauber dynamics has been classically studied.
We only refer to \cite{MR3726904}  and the sophisticated work by Lubetzky and Sly \cite{MR3020173}.
The convergence to a mean one exponential random variable also has a long history in probability theory \cite{MR528293}.
As clarified later, we use a general criterion established in \cite{MR3131505}.
Therefore, we also refer to the references in this paper.
We finally mention Hinojosa's study. He has studied the convergence to a mean one exponential random variable
of an exit time for the reaction--diffusion model on the entire domain
in a double-well case \cite{MR2127751} and a one-well case \cite{MR3845029}.

This paper is organized as follows. We introduce our model and results in Section \ref{sec2}.
We prove one of our main results (Theorem \ref{thm2-1}) at the end of this section.
Section \ref{sec3} is devoted to proving Lemma \ref{lem2-3}, which is critical in proving Theorem \ref{thm2-1}.
In Section \ref{sec4}, we prove our second main result (Theorem \ref{thm4-1}).
Since our argument strongly relies on the results in \cite{MR3765880,MR3947320},
we summarize several results for reader's convenience in the appendix.
Appendix \ref{seca} collects miscellaneous properties about the reaction--diffusion equation
and Appendix \ref{secb} discusses the rate function of the dynamical large deviation principle.

\section{Notation and Results}\label{sec2}

Let $\T_N=\Z/N\Z$, $N\ge 1$, be a one-dimensional discrete torus
with $N$ points.  Denote the set $\{0,1\}^{\T_N}$ by $X_N$ and 
the elements of $X_N$ by $\eta$, called {\it configurations}.  For each
$x\in\T_N$ and $\eta\in X_N$,
$\eta(x)$ represents the occupation variable at site $x$
so that $\eta(x)=1$ if site $x$ is occupied,
and $\eta(x)=0$ if site $x$ is vacant.  For each $x \not
=y\in\T_N$, denote by $\eta^{x,y}, \eta^{x}$, the
configuration obtained from $\eta$ by exchanging the occupation
variables $\eta(x)$ and $\eta(y)$, by flipping the occupation
variable $\eta(x)$, respectively.
\begin{align*}
\eta^{x,y}(z)  =  \begin{cases}
     \eta(y)  & \text{if $z=x$} , \\
     \eta(x)  & \text{if $z=y$} , \\
     \eta(z)    & \text{otherwise} ,
\end{cases}
\quad
\eta^x(z)  =  \begin{cases}
     1-\eta(x)  & \text{if $z=x$} ,  \\
     \eta(z)  & \text{if $z\neq x$} .
\end{cases}
\end{align*}

Consider a superposition of the speeded-up symmetric simple
exclusion process with spin-flip dynamics. The generator of this
$X_N$-valued, continuous-time Markov process acts on functions
$f:X_N\to\R$ as
\begin{align*}
L_Nf  = L_Gf + N^2 L_Kf ,
\end{align*}
where $L_G$ is the generator of spin-flip dynamics (Glauber dynamics)
\begin{align*}
L_Gf(\eta)  =  \sum_{x\in\T_N} c(x,\eta)[f(\eta^x) - f(\eta)] ,
\end{align*}
and $L_K$ is the generator of a symmetric simple exclusion
process (Kawasaki dynamics)
\begin{align*}
L_Kf(\eta)  =  (1/2)\sum_{x\in\T_N} [f(\eta^{x,x+1}) - f(\eta)] .
\end{align*}
In defining $L_G$, the jump rate $\{ c(x,\eta): x\in\T_N,\eta\in X_N\}$ is chosen as
$c(x,\eta)=c(\eta(\cdot+x))$ for a given function $c:\{0,1\}^{\Z}\to[0,\infty)$, where modulo $N$ carries the sum.
We also assume that $c$ is {\it local} in the sense that $c$ depends only on finitely many occupation variables $\eta(x)$.
Then, $c$ is identified with a function on $X_N$ for $N$ sufficiently large.

In this paper, we always assume that $c$ is strictly positive,
assuring that the Markov process generated by $L_N$ is irreducible.
Therefore, the process admits a unique probability distribution, which is invariant under the dynamics.
We denote by $\mu_{N}$ its unique stationary probability measure. 

Fix a topological space $X$. For $I=[0,T]$, $T>0$, or $I = \R_+=[0,\infty)$, let $C(I,X)$ be the space of
continuous trajectories from $I$ to $X$, endowed with the uniform topology.
Similarly, let $D(I,X)$ be the space of right continuous trajectories from $I$
to $X$ with left limits, endowed with the Skorokhod topology.  For each $N$, let
$\{\eta_t^N:t\ge0\}$ be the continuous-time Markov process on $X_N$
whose generator is given by $L_N$. For a probability
measure $\nu$ on $X_N$, denote by ${{\bb P}}_{\nu}$ the probability measure
on $D(\R_{+},X_N)$ induced by the process $\eta^{N}_{t}$ starting
from $\nu$.  Denote the measure ${{\bb P}}_{\nu}$ by ${{\bb P}}_{\eta}$ when
the probability measure $\nu$ is the Dirac measure concentrated on the
configuration $\eta$. The expectation with respect to ${{\bb P}}_{\eta}$ is represented
by $\bb E_{\eta}$.

Let $\nu_\rho=\nu^N_{\rho}$, $0\le\rho\le 1$, be the Bernoulli product
measure on $X_N$ with a density $\rho$.  Define the polynomial functions $B,D:[0,1]\to\R$ by
\begin{align*}
B(\rho) = \int [1-\eta(0)] c(0,\eta) d\nu_\rho , \quad
D(\rho) = \int \eta(0) c(0,\eta) d\nu_\rho  .
\end{align*}
We also set $F(\rho)=B(\rho)-D(\rho)$ and denote
a primitive function of $-F$ by $V$.
We call $V$ a {\it potential}.
Note that $V$ has at least one local minimum on $(0,1)$ since 
$F(0)>0, F(1)<0$, and $V(\rho)$ is a polynomial in $\rho$.

As examined in the introduction, De Masi, Ferrari, and Lebowitz in \cite{MR857069} has shown that
under an appropriate convergence of the initial distribution, the macroscopic density 
\begin{align*}
\pi_t^N=\frac{1}{N}\sum_{x\in \T_N}\eta_t^N(x)\delta_{x/N} ,
\end{align*}
converges in probability to a unique weak solution to  the reaction--diffusion equation
\begin{align*}
\partial_t\rho=(1/2)\Delta\rho + F(\rho).
\end{align*}
As clarified later, our proof strongly relies on the corresponding large deviation principle (Theorem \ref{thm2-2}).

We here give an example of the jump rate $c$.
The following example has been given in \cite{MR857069}.

\begin{example}\label{exa2-1}
For $0\le\gamma<1$, define
\begin{align*}
c(\eta)=1+\gamma(1-2\eta(0))(\eta(1)+\eta(-1)-1)+\gamma^2(2\eta(-1)-1)(2\eta(1)-1).
\end{align*}
Letting $\gamma=\tanh \beta, \beta\ge0$, the Glauber dynamics generated by $L_G$ is
reversible with respect to a Gibbs measure of the one-dimensional nearest neighbor Ising model at the inverse temperature $\beta$.
However, our stationary measure $\mu_N$ is neither Bernoulli nor Gibbs, except $\gamma=0$ \cite{MR1400378}.

An elementary calculation shows
\begin{align*}
B(\rho)&=(1-\rho)\left\{1-2\gamma(1-2\rho)+\gamma^2(1-2\rho)^2 \right\},\\
D(\rho)&=\rho\left\{1+2\gamma(1-2\rho)+\gamma^2(1-2\rho)^2 \right\},
\end{align*}
and
\begin{align*}
F(\rho)&=-2(\rho-1/2)\left\{ 1-2\gamma + 4\gamma^2(\rho-1/2)^2 \right\},
\end{align*}
for each $\rho\in[0,1]$, and $V$ defined by
\begin{align*}
V(\rho)=(1-2\gamma)(\rho-1/2)^2 + 2\gamma^2(\rho-1/2)^4
\end{align*}
is a potential. $V$ has two local minima if, and only if,
$\gamma>1/2$, otherwise, a unique minimum.
\end{example}

Let us recall the notion of the {\it total variation mixing time}.
For any probability measures $\mu, \nu$ on $X_N$, define
\begin{align*}
\|\mu-\nu\|_{\TV} =  \max_{A\subset X_N}|\mu(A)-\nu(A)|
 = \dfrac{1}{2}\sum_{\eta\in X_N}|\mu(\eta)-\nu(\eta)| .
\end{align*}
Then, for each $0<\e<1$, we define the mixing time $t_{\mix}^N(\e)$ by
\begin{align*}
t^N_{\mix}(\e) =  \inf \left\{ t\ge0: \max_{\eta\in X_N} \|{{\bb P}}_\eta(\eta_t^N\in\cdot) - \mu_N(\cdot)\|_{\TV}\le\e\right\} .
\end{align*}

In this paper, we first study the mixing time of the reaction--diffusion model
when the potential $V$ has more than two local minima.
In this setting, we show that the mixing time has an exponential lower bound in $N$.
The precise statement is as follows.

\begin{theorem}\label{thm2-1}
Assume that the potential $V$ has $\ell$ local minima with $\ell\ge2$. 
Let $h_0$ be the constant given in \eqref{2-12}.
Then, for any $0<\e<\ell^{-1}$ and  any $N$ sufficiently large, we have
\begin{align*}
t_{\mix}^N(\e)  \ge  e^{Nh_0} .
\end{align*}
\end{theorem}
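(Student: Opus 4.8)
The plan is to exhibit $\ell$ disjoint "basins" in configuration space, each carrying a non‑negligible portion of the stationary mass $\mu_N$, such that escaping from one basin to another requires the empirical density to traverse a region of low probability, which by the static large deviation principle (Theorem \ref{thm2-2}) costs an exponentially small probability. The standard route is through the relaxation/mixing bottleneck bound: if $A\subset X_N$ is such that $\mu_N(A)\le 1/2$ and the "conductance" of $A$ — the stationary escape rate from $A$ — is at most $e^{-Nh_0}$, then $t_{\mix}^N(\e)\gtrsim e^{Nh_0}$ for $\e$ below a threshold depending on $\mu_N(A)$. More precisely, I would use the hitting‑time characterization of mixing: choose $A$ to be (a configuration neighborhood of) one of the potential wells $m_i$, show $\mu_N(A)\ge \e$ for $\e<\ell^{-1}$ using that each of the $\ell$ wells attracts a comparable fraction of mass, and show that starting from the deepest configurations of a \emph{different} well the time to reach $A$ is at least $e^{Nh_0}$ with probability close to $1$. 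Since $\max_{\eta}\|\bb P_\eta(\eta_t^N\in\cdot)-\mu_N\|_{\TV}\ge \mu_N(A)-\bb P_\eta(\eta_t^N\in A)$, and the right side stays above $\e$ until the process has had time to hit $A$, the lower bound on $t_{\mix}^N(\e)$ follows.

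The key steps, in order: (i) Fix the $\ell$ local minima $m_1<\dots<m_\ell$ of $V$ on $(0,1)$, and for a small $\delta>0$ let $\mathcal N_i$ be the set of configurations $\eta$ whose empirical density profile lies within $\delta$ (in the relevant weak/$L^1$ metric) of the constant profile $\equiv m_i$. These are disjoint for $\delta$ small. (ii) Using the static large deviation upper and lower bounds of \cite{MR3947320} (summarized in the appendix), show that $\mu_N(\mathcal N_i)$ is bounded below uniformly in $N$ — indeed $\mu_N(\mathcal N_i)\to$ a positive constant, and since the $\mathcal N_i$ together with the transition regions exhaust $X_N$ with the transition regions having vanishing mass, we get $\mu_N(\mathcal N_i)>\e$ for every $i$ once $\e<\ell^{-1}$ and $N$ is large. (iii) This is where $h_0$ enters: define $h_0$ (this is the constant referenced as \eqref{2-12}) as, essentially, the minimal quasi‑potential barrier separating the wells — the infimum, over paths of density profiles joining $\mathcal N_i$ to $\bigcup_{j\ne i}\mathcal N_j$, of the static rate function, minimized over $i$; strict positivity of $h_0$ comes from the structure of $V$ (more than two minima forces a genuine saddle). (iv) Invoke Lemma \ref{lem2-3} — the hard analytic input proved in Section \ref{sec3} — which should say precisely that, starting from $\eta$ with density near $m_j$, the hitting time of $\bigcup_{i\ne j}\mathcal N_i$ exceeds $e^{Nh_0}$ with probability $\to 1$; equivalently that $\bb P_\eta(\tau_{\mathcal N_i}\le e^{Nh_0})\to 0$ for $j\ne i$. (v) Combine: pick $i$ achieving $\mu_N(\mathcal N_i)>\e$, pick any $\eta$ with density near $m_j$, $j\ne i$; then for $t\le e^{Nh_0}$, $\bb P_\eta(\eta_t^N\in\mathcal N_i)\le \bb P_\eta(\tau_{\mathcal N_i}\le t)$, which is small, so $\|\bb P_\eta(\eta_t^N\in\cdot)-\mu_N\|_{\TV}\ge \mu_N(\mathcal N_i)-o(1)>\e$, giving $t_{\mix}^N(\e)\ge e^{Nh_0}$.

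I expect the main obstacle to be step (iv), i.e.\ controlling the hitting time from below by $e^{Nh_0}$ uniformly — this is exactly why it is isolated as Lemma \ref{lem2-3} and deferred to its own section. The difficulty is that a large‑deviation \emph{cost} $h_0$ for a single excursion does not automatically yield an $e^{Nh_0}$ \emph{time} bound: one must rule out that many independent‑ish attempts, or a cleverly correlated sequence of moderate deviations, accumulate to cross the barrier faster; this typically requires a renewal/restart decomposition of the trajectory into excursions away from a neighborhood of $m_j$, a uniform (in starting point within the well) upper bound of order $e^{-Nh_0}$ on the probability that a single excursion reaches $\mathcal N_i$, obtained from the dynamical large deviation principle (Theorem \ref{thm2-2}) together with compactness of the sublevel sets of the dynamical rate function, and then a union bound over the $O(e^{Nh_0})$ excursions occurring before time $e^{Nh_0}$ — with care taken because the number of excursions is itself random. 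A secondary technical point is that the large deviation principles are stated for the empirical density in a weak topology while the sets $\mathcal N_i$ and the barrier $h_0$ must be defined so that the cost of reaching $\bigcup_{j\ne i}\mathcal N_j$ is genuinely bounded below; choosing the right $\delta$ and working with the $L^1$ or uniform metric on profiles, as in \cite{MR3765880,MR3947320}, handles this. The mass estimate (ii) and the soft mixing-time inequality (v) are routine given the cited results.
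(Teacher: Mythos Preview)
Your overall architecture matches the paper's: pick a basin around one local minimum as a test set for the total variation distance, start the chain in a different basin, and use Lemma~\ref{lem2-3} to show the process has not reached the test set by time $e^{Nh_0}$, whence $\|\bb P_{\eta^N}(\eta_t^N\in\cdot)-\mu_N\|_{\TV}>\e$ for all $t\le e^{Nh_0}$. Your description of the obstacle in step~(iv) (renewal decomposition into excursions, single-excursion LDP cost, union bound) is exactly how Lemma~\ref{lem2-3} is proved in Section~\ref{sec3}.

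The one genuine gap is step~(ii). You claim $\mu_N(\mathcal N_i)>\e$ for \emph{every} $i$, invoking the static large deviation bounds of \cite{MR3947320}. But that static LDP is established there only under the unique-minimum hypothesis (condition \hyperlink{UM}{(UM)}); in the multi-well situation there is no proof available that every local minimum of $V$ carries a uniformly positive fraction of $\mu_N$---indeed, a local-but-not-global minimum may well have exponentially small mass. The paper deliberately avoids this issue (and, as stressed in Remark~\ref{rem2-1}, avoids any LDP lower bound altogether). What it actually uses is only the concentration result Lemma~\ref{lem2-2}, which says $\mu_N$ piles up on an $\alpha_0$-neighborhood of $\mc M_{\sol}$, together with a pigeonhole/union argument: writing this neighborhood as $\bigcup_{i=1}^\ell \tilde{\mc M}_i[\alpha_0]$ with $\tilde{\mc M}_i=\mc M_{\sol}\setminus\{\bar\varrho_i\}$, one finds \emph{some} index $i_N$ with $\mc P_N(\tilde{\mc M}_{i_N}[\alpha_0])\ge(1-\e_0)/\ell$. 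That single index is all that is needed for the TV-distance lower bound. Your step~(v) already half-acknowledges this (``pick $i$ achieving \ldots''), so the fix is to replace the over-strong claim in~(ii) by this pigeonhole step and to drop the appeal to the static LDP.
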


\begin{remark}\label{rem2-2}
The jump rate provided in Example \ref{exa2-1} is {\it attractive} in the sense that, for any configurations
$\eta, \xi$ such that $\eta(x)\ge\xi(x)$ for any $x$, it holds that
\begin{align*}
\begin{cases}
c(\eta) \le c(\xi), \quad \text{if $\eta(0)=\xi(0)=1$},\\
c(\eta) \ge c(\xi), \quad \text{if $\eta(0)=\xi(0)=0$}.
\end{cases}
\end{align*}
Note that $V$ is strictly convex if, and only if, $0\le \gamma < 1/2$.
In \cite{tanaka2020glauberexclusion}, we have established that for any attractive reaction--diffusion model
with a strictly convex potential, the mixing time is in the order of $\log N$.
Therefore, the reaction--diffusion model exhibits a phase transition regarding
the mixing time with the critical parameter $\gamma=1/2$.
\end{remark}

\begin{remark}\label{rem2-3}
It is natural to expect that
\begin{align*}
\lim_{N\to\infty}\dfrac1N \log t_{\mix}^N(\e)  =  \tilde h_0,
\end{align*}
for some constant $\tilde h_0$. To capture this asymptotic behavior,
the transition from a metastable well to another metastable well, so-called metastability, must be studied.
The metastable behavior of this model has been longstanding as an open problem (\cite[Chapter 10]{MR1707314}).
We leave this problem as future work.
\end{remark}

We also study the hitting times of rare events by applying large deviation estimates and some mixing time estimates
in the case where the potential has a unique minimum. Roughly speaking, we show that
when the process starts from a small neighborhood of the unique minimum of the potential,
the normalized hitting time of an open set that does not contain a unique minimum of the potential
converges to a mean one exponential random variable.
Since we need some notations to state this result, its precise statement is postponed to Theorem \ref{thm4-1}.

In Sections \ref{sec2} and \ref{sec3}, we assume that the potential $V$ has $\ell$ local minima with $\ell\ge2$,
whereas, in Section \ref{sec4}, we assume that the potential $V$ has a unique minimum.

The proof of Theorem \ref{thm2-1} mainly consists of Lemmata \ref{lem2-2} and \ref{lem2-3}.
Lemma \ref{lem2-2} provides some concentration results for the stationary states
established in \cite{MR3765880}.
Lemma \ref{lem2-3} provides an asymptotic estimate of the escape time from
a small neighborhood of metastable states.
We conclude this section by stating these lemmata and proving Theorem \ref{thm2-1}.

Let $\T$ be the one-dimensional continuous torus $\T=\R/\Z=[0,1)$
and $\mc M_{+}=\mc M_{+}(\T)$ be the space of all nonnegative measures on $\T$
with the total mass bounded by $1$, endowed with the weak topology. 
Note that $\mc M_{+}$ is compact under the weak topology.
For a
measure $\varrho$ in $\mc{M}_+$ and a continuous function
$G:\T\to\R$, denote the integral of $G$ with respect to $\varrho$ by $\lan \varrho, G \ran$
\begin{equation*}
\lan \varrho, G \ran  =  \int_\T G(\theta) \varrho(d\theta)  .
\end{equation*}
For a measurable function $\rho:\T\to\R$, let $\|\rho\|_2$ denote the $L^2$-norm with respect to the Lebesgue measure on $\T$
\begin{align*}
\|\rho\|_2^2=\int_\T\rho(\theta)^2d\theta.
\end{align*}
We also denote by $\lan\rho_1, \rho_2\ran$ the $L^2$-inner product
for measurable functions $\rho_1, \rho_2:\T\to[0,1]$
\begin{align*}
\lan\rho_1, \rho_2\ran=\int_\T\rho_1(\theta)\rho_2(\theta)d\theta.
\end{align*}

The space $\mc M_+$ is metrizable. By letting $e_{0}(\theta)=1$, $e_{k}(\theta)=
\sqrt 2\cos(2\pi k \theta),$ and $e_{-k}(\theta)= \sqrt 2\sin(2\pi k\theta)$,
$k\in\N$, one can define the distance $d$ on $\mc M_{+}$ using
\begin{equation}\label{2-9}
d(\varrho_{1},\varrho_{2})  =  \sum_{k\in\Z} \dfrac{1}{2^{|k|}}
\, |\lan\varrho_{1},e_{k}\ran - \lan\varrho_{2}, e_{k}\ran|   ,
\end{equation}
and one can show that the topology induced by this distance
corresponds to the weak topology.
Note that for any measurable functions $\rho,\rho':\T\to[0,1]$, we have
\begin{align}\label{2-17}
d(\rho, \rho') \le  3\|\rho -\rho'\|_2.
\end{align}

For each $\varrho\in\mc M_+$ and each $\alpha>0$, let $\mc B(\alpha;\varrho)$, $\mc B[\alpha;\varrho]$
be the $\alpha$-open, $\alpha$-closed neighborhood of $\varrho$ in $\mc M_+$,
respectively. When $\varrho(d\theta)=\rho(\theta)d\theta$ for a measurable function $\rho:\T\to[0,1]$,
we sometimes write these notions with $\rho$ instead of $\varrho$.
For instance, we denote $d(\varrho_1, \varrho_2)$ as $d(\rho_1, \rho_2)$
for $\varrho_i(d\theta)=\rho_i(\theta)d\theta, i=1,2.$

Let $\pi_N: X_N\to\mc M_+$ be the empirical measure defined by
\begin{align*}
\pi_N(\eta) = \frac{1}{N}\sum_{x\in \T_N}\eta(x)\delta_{x/N} , \quad \eta\in X_N,
\end{align*}
where $\delta_\theta$ is the Dirac measure that has a point mass at
$\theta\in\T$.
We also let $\mc P_N=\mu_N\circ(\pi_N)^{-1}$, which is a probability measure on
$\mc M_+$.

Let $S$ be the set of all classical solutions to the  semi-linear elliptic equation
\begin{align}\label{seeq}
(1/2)\Delta\rho + F(\rho)=0, \quad \text{on $\T$},
\end{align}
and $\mc M_\sol$ be the set of all measures whose density is a classical solution to \eqref{seeq}
\begin{align*}
\mc M_\sol=\{\bar\varrho\in \mc M_+: \bar\varrho(d\theta)=\bar\rho(\theta)d\theta, \bar\rho\in S\}.
\end{align*}

The following result has been established in \cite{MR3765880}.

\begin{lemma}\cite[Theorem 2.2]{MR3765880}\label{lem2-2}
For any $\alpha>0$, we have
\begin{equation*}
\lim_{N\to\infty}\mc P_N \left(\varrho\in\mc M_+: \min_{\bar\varrho\in\mc M_{\sol}} d\left(\varrho, \bar\varrho\right)\ge\alpha\right)  =  0 .
\end{equation*}
\end{lemma}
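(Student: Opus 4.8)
The plan is to establish the equivalent statement that every subsequential weak limit $\mc P$ of $\{\mc P_N\}_{N\ge1}$ — one exists since $\mc M_+$ is compact — gives full mass to $\mc M_\sol$. This suffices because $\mc M_\sol$ is closed in $\mc M_+$: classical solutions of \eqref{seeq} with values in $[0,1]$ are uniformly bounded in $C^2(\T)$, hence form a compact set, whose image under $\bar\rho\mapsto\bar\rho(\theta)\,d\theta$ is compact. The essential input is an a priori bound on the stationary state: the relative entropy of $\mu_N$ with respect to $\nu_{1/2}$ is bounded by a constant times $N$, and consequently $N^2\langle\sqrt{f_N},(-L_K)\sqrt{f_N}\rangle_{\nu_{1/2}}$ is also bounded by a constant times $N$, where $f_N=d\mu_N/d\nu_{1/2}$. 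The entropy bound holds because $\nu_{1/2}$ is reversible for $L_K$ while $L_G$ is, relative to the accelerated generator $N^2L_K$, a uniformly small perturbation ($c$ is bounded above and away from zero by positivity and locality); the Dirichlet form bound then follows from the standard entropy-production inequality for a stationary density.

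Next I would run the hydrodynamic machinery in the stationary regime. Let $\bb Q_N$ be the law on $D([0,T],\mc M_+)$ of the trajectory $t\mapsto\pi_N(\eta^N_t)$ under $\bb P_{\mu_N}$. Standard tightness criteria, via the Dynkin martingale associated with $L_N$, give relative compactness of $\{\bb Q_N\}$. Feeding the a priori Dirichlet form bound into the one-block and two-block estimates lets one replace local averages of $c(x,\eta)(1-2\eta(x))$ by $F$ evaluated at the mesoscopic density; together with the energy estimate this shows that every limit point $\bb Q$ is concentrated on $C([0,T],\mc M_+)$, on trajectories whose densities lie in $L^2([0,T],H^1(\T))$ with values in $[0,1]$ and are weak solutions of $\partial_t\rho=(1/2)\Delta\rho+F(\rho)$. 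Since $\mu_N$ is stationary, the time-$t$ marginal of $\bb Q_N$ is $\mc P_N$ for every $t$, so along a common subsequence each time-$t$ marginal of $\bb Q$ equals the limit $\mc P$; in particular $t\mapsto\bb E_{\bb Q}[\mc F(\rho_t)]$ is constant, where $\mc F(\rho)=\int_\T[\tfrac14(\rho')^2+V(\rho)]\,d\theta$ is the free energy and $\bb E_{\mc P}[\mc F]<\infty$ by the energy estimate (so $\mc F(\rho_t)<\infty$ $\bb Q$-a.s.\ for every $t$).

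The last step uses the gradient-flow structure of the reaction--diffusion equation: it is the $L^2(\T)$-gradient flow of $\mc F$, so along any weak solution $t\mapsto\mc F(\rho_t)$ is non-increasing and satisfies the energy inequality $\mc F(\rho_T)+\int_0^T\|\partial_t\rho\|_2^2\,dt\le\mc F(\rho_0)$; in particular, $\mc F(\rho_0)=\mc F(\rho_T)$ forces $\partial_t\rho\equiv0$ on $[0,T]$, hence $\rho_0\in S$. Now $\bb Q$-almost surely $\mc F(\rho_T)\le\mc F(\rho_0)$, while $\bb E_{\bb Q}[\mc F(\rho_T)]=\bb E_{\bb Q}[\mc F(\rho_0)]$ by the previous paragraph; hence $\mc F(\rho_T)=\mc F(\rho_0)$ $\bb Q$-a.s., so $\rho_0\in S$ $\bb Q$-a.s. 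Therefore $\mc P(\mc M_\sol)=1$; that $S$ consists of \emph{classical} rather than merely weak solutions is automatic by a short elliptic bootstrap, $F$ being polynomial and the domain one-dimensional.

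I expect the a priori estimate on $\mu_N$ — the order-$N$ relative entropy bound and, above all, the induced order-$N$ bound on $N^2\langle\sqrt{f_N},(-L_K)\sqrt{f_N}\rangle_{\nu_{1/2}}$ — to be the main obstacle: in the stationary regime there is no prescribed initial profile to invoke, and this bound is precisely what powers the replacement estimates and the energy estimate that turn the soft compactness statement into the identification of limiting densities as solutions of \eqref{seeq}. The remaining ingredients (tightness, the Lyapunov argument, closedness of $\mc M_\sol$) are comparatively routine.
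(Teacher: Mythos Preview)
The paper does not prove this lemma; it is quoted from \cite[Theorem~2.2]{MR3765880} and used as a black box, so there is no in-paper argument to compare your proposal against.

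That said, your outline is the standard hydrostatic-limit strategy and matches what one finds in the cited reference: an order-$N$ bound on the relative entropy and on the Kawasaki Dirichlet form of $f_N=d\mu_N/d\nu_{1/2}$, tightness of the stationary empirical process on $D([0,T],\mc M_+)$, identification of subsequential limits $\bb Q$ as concentrated on weak solutions of the reaction--diffusion equation via the one-block/two-block replacement, and then a Lyapunov argument exploiting that all time marginals of $\bb Q$ coincide with $\mc P$ to force $\partial_t\rho\equiv0$ and hence $\rho_0\in S$. The reduction to ``every subsequential limit $\mc P$ is supported on $\mc M_\sol$'' via compactness of $\mc M_\sol$ and Portmanteau is correct. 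The one step that deserves an explicit line is the pathwise energy inequality $\mc F(\rho_T)+\int_0^T\|\partial_t\rho\|_2^2\,dt\le\mc F(\rho_0)$ for \emph{weak} solutions: you need $\rho_0\in H^1(\T)$ $\bb Q$-a.s.\ (which does follow from $\bb E_{\mc P}[\mc F]<\infty$ together with the equality of time marginals) and enough time regularity of $t\mapsto\rho_t$ to justify the chain rule for $\mc F$; both are routine for a semilinear parabolic equation on $\T$ with polynomial nonlinearity, but it is the only place where the argument is not purely soft.
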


To describe a metastable well,
we recall the dynamical large deviation principle from \cite{MR1245249,MR3765880}.
Let $\mc M_{+,1}$ be the closed subset of $\mc M_{+}$ consisting of all
absolutely continuous measures with a density bounded by $1$
\begin{equation*}
\mc{M}_{+,1} =  \left\{\varrho\in\mc{M}_+:
\varrho(d\theta)=\rho(\theta)d\theta,\ 0\le\rho(\theta)\le1\ {\rm a.e.} \ \theta\in\T \right\} .
\end{equation*}
Fix $T>0$. Denote by $C^{m,n}([0,T]\times\T)$, $m,n$ in
$\N_{0}$, the set of all real functions defined on
$[0,T]\times\T$ which are $m$ times differentiable in the first
variable and $n$ times in the second one, and whose derivatives are continuous.

For each trajectory $\pi(t,d\theta)=\rho(t,\theta)d\theta$ in $D([0,T],\mc{M}_{+,1})$,
define the energy $\mc E_{T}(\pi)$ as
\begin{equation*}
\mc E_{T}(\pi) = \sup_{G\in C^{0,1}([0,T]\times\T)}
\left \{2\int_0^Tdt\ \lan\rho_t,\nabla G_t\ran
-\int_0^Tdt\int_{\T}d\theta\ G(t,\theta)^2 \right \} .
\end{equation*}
Note that the energy $\mc E_{T}(\pi)$ is finite if, and only if, $\rho$ has a
generalized derivative denoted by $\nabla\rho$, and this generalized
derivative is square--integrable on $[0,T]\times\T$
\begin{equation*}
\int_{0}^{T} dt \ \int_{\T} d\theta \ |\nabla\rho(t,\theta)|^{2} < \infty .
\end{equation*}
Here, we have
\begin{align*}
\mc E_T(\pi) =  \int_{0}^{T} dt \ \int_{\T} d\theta \ |\nabla\rho(t,\theta)|^{2} .
\end{align*}

For each function $G$ in $C^{1,2}([0,T]\times\T)$, define the
functional $\bar{J}_{T,G}:D([0,T],\mc{M}_{+,1})\to\R$ by
\begin{align*}
\bar{J}_{T,G}(\pi) &   = \lan\pi_T,G_T\ran -\lan\pi_{0},G_0\ran-
\int_0^Tdt\ \lan\pi_t, \partial_tG_t+\frac{1}{2}\Delta G_t\ran \\
&-\frac{1}{2}\int_0^Tdt\ \lan \chi(\rho_t), (\nabla G_t)^2\ran 
-\int_0^Tdt\ \left\{\lan B(\rho_t), e^{G_t}-1\ran + \lan D(\rho_t) , 
e^{-G_t}-1 \ran \right\},
\end{align*}
where $\chi(r)=r(1-r)$ is the mobility. 

Let $J_{T,G}: D([0,T],\mc{M}_+) \to[0,\infty]$ be the functional defined by
\begin{equation*}
J_{T,G} (\pi)  =  
\begin{cases}
\bar{J}_{T,G}(\pi)  & \text{if $\pi\in D([0,T],\mc{M}_{+,1})$} , \\
\infty  & \text{otherwise} ,
\end{cases}
\end{equation*}
and let $I_T:D([0,T],\mc{M}_+)\to[0,\infty]$ be the functional defined by
\begin{equation*}
I_T(\pi)  =  \begin{cases}
     \sup{J_{T,G}(\pi)} &
     \text{if $\mc E_{T}(\pi)<\infty$} , \\
     \infty  & \text{otherwise} ,
\end{cases}
\end{equation*}
where the supremum is carried over all functions $G$ in
$C^{1,2}([0,T]\times\T)$.
It has been established in \cite[Theorem 4.7]{MR3765880} that $I_T(\cdot)$
is lower semicontinuous and has compact level sets.

For any $T>0$ and any measurable function $\rho:\T\to[0,1]$,
define the dynamical large deviation rate function
$I_T(\cdot|\rho):D([0,T],\mc{M}_+)\to[0,\infty]$ as
\begin{equation*}
I_T(\pi|\rho)  =  \begin{cases}
     I_{T}(\pi) &
     \text{if $\pi(0,d\theta)=\rho(\theta)d\theta$} , \\
     \infty  & \text{otherwise} .
\end{cases}
\end{equation*}

We say that a sequence of initial configurations $\{\eta^N\}_N$
is {\it associated with} a measurable function $\rho:\T\to[0,1]$ if,
for any continuous function $G:\T\to\R$,
\begin{align*}
\lim_{N\to\infty} \dfrac{1}{N} \sum_{x\in\T_N} G(x/N)\eta^N(x)  =  \int_{\T} G(\theta)\rho(\theta)d\theta .
\end{align*}
Fix $T>0$ and let $Q_{\eta^N}$ be the distribution on the path space $D([0,T], \mc M_+)$
of the $\mc M_+$-valued process $\pi_N(\eta_\cdot^N)$ starting from a deterministic configuration $\pi_N(\eta^N)$.
The large deviation principle for the reaction--diffusion model was first
established in \cite[Theorem 2.2]{MR1245249} for the case where product measures provide the initial distribution.
When the process starts from a deterministic configuration, \cite{MR3765880} established the same result.

\begin{theorem}\cite[Theorem 2.5]{MR3765880}\label{thm2-2}
Assume that a sequence of initial configurations $\{\eta^N\}_N$
is associated with a measurable function $\rho:\T\to[0,1]$.
Then, for any closed set $\mf C \subset D([0,T], \mc M_+)$, we have
\begin{align*}
\limsup_{N\to\infty}\dfrac1N \log Q_{\eta^N}
\left( \mf C \right) \le - \inf_{\pi\in\mf C} I_T(\pi|\rho) .
\end{align*}
\end{theorem}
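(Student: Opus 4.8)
The plan is to follow the classical route to hydrodynamic large deviation upper bounds, in the form carried out for this model in \cite{MR1245249,MR3765880}: exponential martingales indexed by test functions, a superexponential replacement lemma, exponential tightness, and a min--max argument. For each $G$ in $C^{1,2}([0,T]\times\T)$ I would set $H^G_s(\eta)=\sum_{x\in\T_N}G(s,x/N)\eta(x)=N\lan\pi_N(\eta),G_s\ran$ and use the mean--one exponential martingale
\begin{align*}
M^{N,G}_t=\exp\Big\{ H^G_t(\eta^N_t)-H^G_0(\eta^N_0)-\int_0^t \big(e^{-H^G_s}(\partial_s+L_N)e^{H^G_s}\big)(\eta^N_s)\,ds \Big\}.
\end{align*}
Expanding the action of $L_N=L_G+N^2L_K$ on $e^{H^G_s}$ and Taylor--expanding in $1/N$, the time integrand equals $N$ times an average of local functions: the $N^2L_K$ part produces, to leading order, the discrete-Laplacian term and the mobility term involving $\eta^N_s(x)\eta^N_s(x+1)$, while the $L_G$ part produces the Glauber terms built from $(1-\eta(x))c(x,\eta)$ and $\eta(x)c(x,\eta)$ paired with $e^{\pm G_s}-1$, all up to a uniform $o(1)$ error.

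The technical heart is the superexponential replacement. I would show that, up to superexponentially small probability, these empirical averages may be replaced by the corresponding macroscopic quantities---$N^{-1}\sum_x H(x/N)(1-\eta^N_s(x))c(x,\eta^N_s)$ by $\lan B(\rho_s),H\ran$, the analogous statement for $D$, and $N^{-1}\sum_x H(x/N)\eta^N_s(x)\eta^N_s(x+1)$ by $\lan\chi(\rho_s),H\ran$, with $\rho_s$ the density of $\pi^N_s$ averaged over a macroscopic box. This is the one-block/two-block estimate: combine the entropy (Feynman--Kac) inequality with the Rayleigh/spectral-gap bound for the symmetric simple exclusion generator sped up by $N^2$; since $c$ is bounded and local, the non-conservative Glauber part contributes only $O(N)$ to the perturbed Dirichlet form and is dominated by the $N^2$ exclusion part. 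Together with the first step this yields, for every such $G$ and every $\delta>0$,
\begin{align*}
\limsup_{N\to\infty}\frac1N\log\bb P_{\eta^N}\!\Big(\Big|\tfrac1N\log M^{N,G}_T+\bar J_{T,G}(\pi^N_{[0,T]})\Big|>\delta\Big)=-\infty .
\end{align*}

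Since $\mc M_+$ is compact, exponential tightness of $\{Q_{\eta^N}\}$ in $D([0,T],\mc M_+)$ would follow by controlling the oscillation of $t\mapsto\lan\pi^N_t,e_k\ran$ for the countable basis $\{e_k\}$ via Doob's inequality and a Garsia--Rodemich--Rumsey argument, so it suffices to treat compact $\mf C$. For such a set I would cover it by small balls, apply Chebyshev's inequality to the mean--one martingale $M^{N,G}_T$ on each ball together with the replacement estimate to get $Q_{\eta^N}(\text{ball around }\pi)\le e^{-N(\bar J_{T,G}(\pi)-o(1))}$, optimize over a finite family $G_1,\dots,G_m$, sum over the finite cover, and shrink the radius, obtaining $\limsup_N\frac1N\log Q_{\eta^N}(\mf C)\le-\inf_{\pi\in\mf C}\max_{j\le m}\bar J_{T,G_j}(\pi)$; a standard min--max argument (see \cite{MR1707314}) then exchanges the infimum over $\mf C$ with the supremum over $G$ to give $-\inf_{\pi\in\mf C}\sup_G J_{T,G}(\pi)$. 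Finally, a separate energy estimate shows that trajectories with $\mc E_T(\pi)=\infty$ carry superexponentially small probability, consistently with $I_T=\infty$ off $\{\mc E_T<\infty\}$; and since $\{\eta^N\}_N$ is associated with $\rho$, $Q_{\eta^N}$ charges only trajectories starting at $\pi_N(\eta^N)\to\rho(\theta)d\theta$, so the constraint $\pi(0,d\theta)=\rho(\theta)d\theta$ in $I_T(\cdot|\rho)$ is free. Combining these gives the claim.

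The main obstacle is the superexponential replacement step: in the non-conservative reaction--diffusion dynamics one cannot simply quote the exclusion-process replacement lemma, so the one-block and two-block estimates must be redone for $L_G+N^2L_K$, checking that the Glauber contribution to the perturbed Dirichlet form is genuinely of lower order. A secondary subtlety, specific to the deterministic initial configuration (rather than the product-measure case of \cite{MR1245249}), is the absence of a Sanov-type input at time $0$: one must verify that the short-time, diffusively regularized behavior of $\pi^N$ produces no extra cost, which is precisely where the energy estimate is needed.
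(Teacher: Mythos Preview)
The paper does not prove Theorem~\ref{thm2-2}: it is stated as a quotation of \cite[Theorem~2.5]{MR3765880} and used as a black box throughout, so there is no ``paper's own proof'' to compare against. Your sketch accurately outlines the standard argument carried out in \cite{MR1245249,MR3765880}---exponential martingales indexed by $C^{1,2}$ test functions, the superexponential one-block/two-block replacement lemma (with the Glauber perturbation of the Dirichlet form absorbed as lower order against the $N^2$-speeded exclusion part), exponential tightness, the min--max/finite-cover argument, and the energy estimate to justify $I_T=\infty$ off $\{\mc E_T<\infty\}$---and you correctly flag the two places where the deterministic-initial-condition version in \cite{MR3765880} differs from the product-measure version in \cite{MR1245249}. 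For the purposes of the present paper nothing more is needed; if you intend this as a standalone proof you would have to actually execute the replacement lemma and the energy estimate rather than just name them, but as a sketch it is faithful to the cited source.
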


\begin{remark}\label{rem2-1}
Besides the assumption of Theorem \ref{thm2-2}, under the additional condition
\begin{itemize}
\item[\hypertarget{C}{(C)}] the functions $B$ and $D$ are concave,
\end{itemize}
the large deviation lower bound was also established in \cite{MR1245249,MR3765880}.
Namely, for any open set $\mf O \subset D([0,T], \mc M_+)$, we have
\begin{align*}
\liminf_{N\to\infty}\dfrac1N \log Q_{\eta^N}
\left( \mf O \right) \ge - \inf_{\pi\in\mf O} I_T(\pi|\rho) .
\end{align*}
Note that we do not use the large deviation lower bound in Sections \ref{sec2} and \ref{sec3},
so we do not have to assume the condition {\rm \hyperlink{C}{(C)}} in these sections.
Moreover, several results, which we cited in this paper from \cite{MR3765880, MR3947320}, remain in force
because they all hold without the condition {\rm \hyperlink{C}{(C)}} (the lower bound is not involved everywhere).
\end{remark}

Denote the positions at which the local minima of $V$ are attained by $\rho_1, \ldots, \rho_\ell$, $\ell\ge2$.
For each $i=1,\ldots,\ell$, let $\bar\rho_i$ be the constant function defined by $\bar\rho_i(\theta)\equiv\rho_i, \theta\in\T$
and $\bar\varrho_i$ the measure $\bar\varrho_i(d\theta)=\bar\rho_i(\theta)d\theta$.
Clearly, $\bar\varrho_i\in \mc M_\sol$ for each $i=1,\ldots,\ell$.

Fix $i=1,\ldots,\ell$. We construct small regions around $\bar\varrho_i$ denoted by $\mc A_i, \mc B_i$, and $\mc C_i$.
Let $\alpha_i,\beta_i$, and $\gamma_i$ be positive numbers chosen in the following way.
We denote the following sets by $\mc A_i, \mc B_i,$ and $\mc C_i$, respectively
\begin{align*}
\mc A_i = \mc B(\alpha_i;\bar\varrho_i) , \quad
\mc B_i =  \mc B[2\beta_i;\bar\varrho_i]\setminus \mc B(\beta_i;\bar\varrho_i)  , \quad
\mc C_i = \mc B(\gamma_i;\bar\varrho_i) .
\end{align*}

\begin{enumerate}
\item[\hypertarget{ga}{($\gamma$)}] First, choose $\gamma_i>0$ so that $\mc C_i$ does not intersect with 
the $\gamma_i$-closed neighborhood of $\mc M_\sol\setminus\{\bar\varrho_i\}$.
This choice is possible by Corollary \ref{cora-2}. Moreover, by Lemma \ref{lemb-4}, we have
\begin{align*}
2h_i:=\inf \{ V_i(\varrho): \varrho\notin \mc C_i \} > 0 ,
\end{align*}
where the function $V_i:\mc M_+\to[0,\infty]$ is the so-called {\it quasi-potential}
\begin{align*}
V_i(\varrho) =  \inf \left\{ I_T(\pi|\bar\rho_i): T>0, \pi\in D([0,T], \mc M_+), \pi_T=\varrho \right\} , \quad \varrho\in\mc M_+ .
\end{align*}
\item[($\beta$)] Second, we choose $\beta_i>0$ so that the following conditions are in force:
\begin{enumerate}
\item[($\beta$-1)] $2\beta_i<\gamma_i$.
\item[\hypertarget{be2}{($\beta$-2)}] For any $\rho(\theta)d\theta\in\mc B_i$, denote by $\rho_t$
the unique weak solution to the Cauchy problem
\begin{align}\label{hdleq}
\begin{cases}
\partial_t\rho =  (1/2)\Delta\rho +F(\rho) ,\\
\rho(0,\cdot) =  \rho(\cdot) .
\end{cases}
\end{align}
Then, it holds that
$\rho_t$ converges to $\bar\rho_i$ in the supremum norm as $t\to\infty$.
\item[\hypertarget{be3}{($\beta$-3)}] There exists $T_{1,i}>0$ such that
for any $\beta'\le\beta_i, T'\ge T_{1,i}$
and $\tilde\rho(\theta)d\theta\in\mc B[\beta';\bar\varrho_i]$,
\begin{align*}
\inf_{\pi\in \mf C_{T'}(\mc C_i^c)} I_{T'}(\pi|\tilde\rho)
 \ge  \inf \{ V_i(\varrho): \varrho\notin \mc C_i \}- h_i/2,
\end{align*}
where for each $T>0$ and each set $\mc D\subset\mc M_+$,
$\mf C_T(\mc D)$ stands for the subset of $D([0,T],
\mc M_{+})$ consisting of all trajectories $\pi$ for which there exists some
time $t \in [0, T]$ such that $\pi(t)$ belongs to $\mc D$ or
$\pi(t-)$ belongs to $\mc D$.
\end{enumerate}
\hyperlink{be2}{($\beta$-2)} and \hyperlink{be3}{($\beta$-3)} can be in force
using Lemmata \ref{lema-3} and \ref{lem3-3}, respectively.
\item[($\alpha$)] Finally, we choose $\alpha_i>0$ so that $\alpha_i<\beta_i$.
\end{enumerate}
We also set
\begin{align}\label{2-12}
h_0 =  \min\{h_i:i=1,\ldots,\ell\} .
\end{align}

For a general set $\mc D\subset \mc M_+$, we define the set
$\mc D^N\subset X_N$ by $\mc D^N=\pi_N^{-1}(\mc D)$.\footnote{We denote a subset of $\mc M_+$ by a calligraphic letter.
The corresponding subset with superscript $N$ is a subset of $X_N$.
For instance, for $\mc D\subset\mc M_+$, $\mc D^N$ is the subset of $X^N$ defined as above.}
Let $\mc A$ and $\mc C$ be the open sets given by
\begin{align*}
\mc A =  \bigcup_{i=1}^\ell \mc A_i , \quad \mc C =  \bigcup_{i=1}^\ell \mc C_i ,
\end{align*}
respectively, and $H_N$ be the hitting time of $[\mc C^N]^c$:
\begin{align}\label{2-10}
H_N =  \inf \left\{ t\ge0: \eta_t^N\in [\mc C^N]^c  \right\}.
\end{align}
In the previous display, and in what follows, for a general set $A$, $A^c$ denotes
the complement of $A$.
The following lemma is critical in proving Theorem \ref{thm2-1}.
Its proof is postponed to Section \ref{sec3}.

\begin{lemma}\label{lem2-3}
We have
\begin{align*}
\lim_{N\to\infty} \max_{\eta^N\in \mc A^N}\bb P_{\eta^N}\left( H_N \le e^{Nh_0} \right)  =  0 ,
\end{align*}
where $h_0$ is the constant defined in \eqref{2-12}.
\end{lemma}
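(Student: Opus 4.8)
The plan is to show that, starting from any configuration in $\mc A^N$, the process cannot escape the larger neighborhood $\mc C^N$ before time $e^{Nh_0}$ with probability tending to $1$, by combining the large deviation upper bound (Theorem \ref{thm2-2}) with a renewal-type argument over successive time windows. First I would fix $i$ and reduce to controlling excursions starting from $\mc B[\beta_i;\bar\varrho_i]$: by property \hyperlink{be2}{($\beta$-2)}, the deterministic hydrodynamic trajectory starting from any point of $\mc B_i$ returns to a small neighborhood of $\bar\varrho_i$, so on the event that the empirical process stays close to the hydrodynamic limit for a time $T_{2,i}\ge T_{1,i}$, it is driven back inside $\mc B(\alpha_i;\bar\varrho_i)$; this "entropy" (return) estimate is quantified by Theorem \ref{thm2-2} applied to the complement of a tube around the hydrodynamic solution, which has arbitrarily small probability, exponentially in $N$. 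I would choose a single time horizon $T' = T_{2,i}$ uniformly (taking the max over the finitely many $i$) so that in one window of length $T'$ the process either returns near $\bar\varrho_i$ or has already been observed in $\mc C_i^c$.

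Next I would estimate the probability of escaping $\mc C_i$ within a single window of length $T'$ when starting from $\mc B[\beta_i;\bar\varrho_i]$. By the Markov property and the large deviation upper bound applied to the closed set $\mf C_{T'}(\mc C_i^c)$, together with property \hyperlink{be3}{($\beta$-3)} and the definition $2h_i = \inf\{V_i(\varrho):\varrho\notin\mc C_i\}$, this probability is at most $\exp(-N(2h_i - h_i/2 - \delta)) = \exp(-N(3h_i/2 - \delta))$ for $N$ large, for any fixed small $\delta>0$; in particular it is bounded by $e^{-Nh_i}\le e^{-Nh_0}$ for $N$ large. Combining with the return estimate from the previous paragraph, each window of length $T'$ contributes a failure probability at most $C e^{-Nh_0}$ (the escape probability) plus $\varepsilon_N$ (the probability of straying from the hydrodynamic tube), where $\varepsilon_N\to0$ exponentially. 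Then, writing $H_N\le e^{Nh_0}$ as the event that escape occurs within $\lceil e^{Nh_0}/T'\rceil$ consecutive windows and using a union bound over these windows together with iterated conditioning (each successful window returns the process to $\mc A^N\subset\mc B[\beta_i;\bar\varrho_i]$, so the single-window estimate applies again with the worst-case starting point), I get
\begin{align*}
\max_{\eta^N\in\mc A_i^N}\bb P_{\eta^N}(H_N\le e^{Nh_0})
\le \frac{e^{Nh_0}}{T'}\left(Ce^{-N(3h_i/2-\delta)} + \varepsilon_N\right),
\end{align*}
which tends to $0$ once $\delta$ is chosen with $3h_i/2-\delta > h_0$; since the escape from $\mc C^N$ requires escape from the connected component $\mc C_i^N$ containing the starting point (the $\mc C_i$ are disjoint by construction via \hyperlink{ga}{($\gamma$)} and ($\beta$-1)), taking the max over $i=1,\ldots,\ell$ finishes the argument.

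The main obstacle I anticipate is making the renewal/iteration step fully rigorous: one must verify that after a "good" window the process is genuinely back in a set on which the single-window bounds apply uniformly, and that the bad events in different windows can be handled by a clean union bound despite the conditioning. Concretely, the delicate point is that property \hyperlink{be3}{($\beta$-3)} is stated for starting densities in $\mc B[\beta';\bar\varrho_i]$ with $\beta'\le\beta_i$, so the return estimate must land the process inside $\mc B[\alpha_i;\bar\varrho_i]$ (hence inside $\mc B[\beta_i;\bar\varrho_i]$) with the stated uniformity, and one must also ensure that during a good window the process never touches $\mc C_i^c$ — this is exactly where the tube around the hydrodynamic solution, kept inside $\mc C_i$ by ($\beta$-1) and \hyperlink{be2}{($\beta$-2)}, is used. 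A secondary technical nuisance is the passage from continuous time to the discrete count of windows and the treatment of the initial (possibly partial) window, but this is routine. I would also need to invoke that $\pi_N(\eta^N)$ for $\eta^N\in\mc A^N$ is associated (in the sense of the excerpt) with a density in the relevant neighborhood, so that Theorem \ref{thm2-2} applies with the correct initial profile; since $\mc A_i$ is an $\alpha_i$-neighborhood of the constant $\bar\rho_i$, this is immediate.
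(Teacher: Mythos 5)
Your high-level strategy is the right one, and your single-excursion estimate is exactly the paper's: starting near $\bar\varrho_i$, the probability of touching $\mc C_i^c$ within a bounded time horizon is at most $e^{-N(3/2)h_i+o(N)}$ by Theorem \ref{thm2-2} applied to the closed set $\mf C_{T'}(\mc C_i^c)$ together with \hyperlink{be3}{($\beta$-3)}, and since $h_0\le h_i<(3/2)h_i$ this beats a union bound over $\sim e^{Nh_0}$ excursions. However, your renewal scheme over \emph{fixed-length} time windows has a genuine gap. Your per-window failure probability is $Ce^{-N(3h_i/2-\delta)}+\varepsilon_N$, where $\varepsilon_N$ is the probability of leaving a tube around the hydrodynamic trajectory. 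That event only costs the dynamical rate function of a small deviation \emph{inside} $\mc C_i$, which has no relation to the barrier $2h_i$; it decays like $e^{-cN}$ for some $c>0$ that can be far smaller than $h_0$ (and shrinks with the tube width). Hence $e^{Nh_0}\varepsilon_N$ need not tend to $0$, and the union bound fails. Relatedly, if a window is ``neither escape nor tube-following,'' the process at the end of that window can sit anywhere in $\mc C_i$, including arbitrarily close to $\partial\mc C_i$, where neither \hyperlink{be3}{($\beta$-3)} nor your single-window bound applies; so the iteration does not close. Two secondary points: \hyperlink{be2}{($\beta$-2)} gives convergence of the PDE flow to $\bar\rho_i$ for each initial datum but not uniformly over $\mc B_i$, so a single deterministic window length $T'$ guaranteeing return is not available; and a worst-case sequence $\eta^N\in\mc A_i^N$ is \emph{not} automatically associated with a density profile — one needs the compactness-of-$\mc M_+$ subsequence extraction to apply Theorem \ref{thm2-2}.

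The paper avoids all of this by replacing fixed windows with stopping times: $\sigma_k$ is the hitting time of the annulus $\mc B_i^N$ and $\tau_k$ the subsequent hitting time of $\mc A_i^N\cup[\mc C_i^N]^c$, so each cycle genuinely restarts from $\mc A_i^N$ (or has already exited), and no tube estimate is needed. The per-cycle exit probability (Lemma \ref{lem3-1}) is split into a short-time part, handled as you propose, and a long-time part $\{H_N>T\}$, handled by the uniform hitting estimate of Lemma \ref{lemb-2} (which gives cost $2h_i$), not by a tube argument. The number of cycles completed by time $e^{Nh_i}$ is then controlled by Lemma \ref{lem3-2}, which only asserts that each cycle lasts at least $T_2$ with probability $\ge 1/2$ — a law-of-large-numbers bound for Bernoulli indicators suffices here, so no exponentially small failure probability is required for the timing part. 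If you want to keep your architecture, you must replace the fixed windows by these stopping times and supply the analogue of Lemma \ref{lemb-2} for the long excursions.
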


We can now prove Theorem \ref{thm2-1}.

\begin{proof}[Proof of Theorem \ref{thm2-1}]

Fix $0<\e<\ell^{-1}$ and $\e_0=(1-\ell\e)/3$. We also let $\alpha_0=\min_i \alpha_i$.
By applying Lemma \ref{lem2-2} for $\alpha=\alpha_0$, 
there exists $N_0\ge1$ such that for all $N\ge N_0$, we have
\begin{equation}\label{2-2}
\mc P_N \left(\varrho\in\mc M_+: \min_{\bar\varrho\in\mc M_{\sol}} d\left(\varrho, \bar\varrho\right)\le\alpha_0\right)
 \ge  1 - \e_0  .
\end{equation}
For each $i=1,\ldots, \ell$, let $\tilde{\mc M}_i[\alpha_0]$ be the $\alpha_0$-closed neighborhood of $\tilde{\mc M}_i=\mc M_\sol\setminus\{\bar\varrho_i\}$. Note that
\begin{align*}
\left\{\varrho\in\mc M_+: \min_{\bar\varrho\in\mc M_{\sol}} d\left(\varrho, \bar\varrho\right)\le\alpha_0\right\} = \bigcup_{i=1}^\ell \tilde{\mc M}_i[\alpha_0].
\end{align*}
Therefore, the union bound shows that
\begin{align}\label{2-15}
\mc P_N \left(\varrho\in\mc M_+: \min_{\bar\varrho\in\mc M_{\sol}} d\left(\varrho, \bar\varrho\right)\le\alpha_0\right) \le \sum_{i=1}^\ell \mc P_N\left( \tilde{\mc M}_i[\alpha_0] \right).
\end{align}
It follows from \eqref{2-2} and \eqref{2-15} that for each $N\ge N_0$ there exists an integer $i_N\in\{1,\ldots,\ell\}$ such that
\begin{equation}\label{2-14}
\mc P_N \left(\tilde{\mc M}_{i_N}[\alpha_0] \right) \ge  (1 - \e_0)/\ell  .
\end{equation}

Let $j_N=i_N+1$ if $i_N=1,\ldots,\ell-1$, otherwise $j_N=1$ .
Let $\{\eta^N\}_N$ be a sequence in $\mc A_{j_N}^N$.
Then, from Lemma \ref{lem2-3}, there exists $N_1\ge1$ such that for all $N\ge N_1$, we have
\begin{align}\label{2-3}
\bb P_{\eta^N}\left( H_N \le e^{Nh_0} \right)  \le  \e_0/\ell .
\end{align}

We claim that for any $N\ge\max(N_0,N_1)$ and $t\le e^{Nh_0}$,
we have
\begin{align}\label{2-5}
\max_{\eta\in X_N} \|{{\bb P}}_\eta(\eta_t^N\in\cdot) - \mu_N(\cdot)\|_{\TV}  >  \e .
\end{align}
Once \eqref{2-5} is proven,
then the conclusion of Theorem \ref{thm2-1} is immediate by the definition of $t_\mix^N(\e)$.

Let us prove \eqref{2-5}. Let $N\ge\max(N_0,N_1)$ and $t\le e^{Nh_0}$. We have
\begin{align}\label{2-13}
\max_{\eta\in X_N} \|{{\bb P}}_\eta(\eta_t^N\in\cdot) - \mu_N(\cdot)\|_{\TV}
 &\ge  \|{{\bb P}}_{\eta^N}(\eta_t^N\in\cdot) - \mu_N(\cdot)\|_{\TV}\notag\\
 &\ge  \left| {{\bb P}}_{\eta^N}\left(\eta_t^N \in [\mc C_{i_N}^N]^c\right) - \mu_N([\mc C_{i_N}^N]^c)\right | \notag\\
 &\ge  \mu_N([\mc C_{i_N}^N]^c) -  {{\bb P}}_{\eta^N}\left(\eta_t^N\in [\mc C_{i_N}^N]^c\right)  .
\end{align}
Since $[\mc C_{i_N}^N]^c$ contains the set $\pi_N^{-1}(\tilde{\mc M}_{i_N}[\alpha_0])$,
we have from \eqref{2-14} and \eqref{2-13}
\begin{align}\label{2-7}
\max_{\eta\in X_N} \|{{\bb P}}_\eta(\eta_t^N\in\cdot) - \mu_N(\cdot)\|_{\TV}
&\ge  \mu_N \left( \pi_N^{-1} (\tilde{\mc M}_{i_N}[\alpha_0])\right)  - {{\bb P}}_{\eta^N}\left(\eta_t^N\in [\mc C_{i_N}^N]^c\right) \notag\\
&=  \mc P_N\left(\tilde{\mc M}_{i_N}[\alpha_0]\right) - {{\bb P}}_{\eta^N}\left(\eta_t^N\in [\mc C_{i_N}^N]^c\right) \notag\\
&\ge  (1 - \e_0)/\ell - {{\bb P}}_{\eta^N}\left(\eta_t^N\in [\mc C_{i_N}^N]^c\right).
\end{align}

Note that the event $\{\eta_t^N\in [\mc C_{i_N}^N]^c, H_N>e^{Nh_0}\}$ is empty
because $\eta_0^N=\eta^N\in \mc A_{j_N}^N$ and $t\le e^{Nh_0}$.
Therefore, we have from \eqref{2-3}
\begin{align}\label{2-8}
{{\bb P}}_{\eta^N}\left(\eta_t^N\in [\mc C_{i_N}^N]^c\right)
 &=  {{\bb P}}_{\eta^N}\left(\eta_t^N\in [\mc C_{i_N}^N]^c, H_N\le e^{Nh_0}\right)\notag\\
 &\le  {{\bb P}}_{\eta^N}\left(H_N\le e^{Nh_0}\right)  \le  \e_0/\ell .
\end{align}
Note that by the choice of $\e_0$ and $\e<\ell^{-1}$, we have
\begin{align}\label{2-6}
(1-2\e_0)/\ell = (1/3\ell)+(2\e/3) >  \e .
\end{align}
Therefore, \eqref{2-5} follows from \eqref{2-7},
\eqref{2-8}, and \eqref{2-6}, which completes proving Theorem \ref{thm2-1}.
\end{proof}

\section{Proof of Lemma \ref{lem2-3}}\label{sec3}

In this section, we prove Lemma \ref{lem2-3}.
Fix $i=1,\ldots,\ell$. We first observe that, starting from a configuration $\eta^N$ belonging to $\mc A_i^N$,
the process reaches $\mc A_j^N, j\neq i$ after exiting from $\mc C_i^N$.
Therefore, if we can show 
\begin{align}\label{3-14}
\lim_{N\to\infty} \max_{\eta^N\in\mc A_i^N}\bb P_{\eta^N}\left( H_N \le e^{Nh_i} \right)  =  0 ,
\end{align}
Lemma \ref{lem2-3} immediately follows from the definition of $h_0$ and \eqref{3-14}.

In what follows, we fix $i=1,\ldots,\ell$ and prove \eqref{3-14}.
Since $i$ is kept fixed, we sometimes omit dependence on $i$ for some notation.
Note that when the process starts from $\mc A_i^N$,
\begin{align}\label{3-21}
H_N =  \inf \left\{ t\ge0: \eta_t^N\in [\mc C_i^N]^c  \right\} .
\end{align}

Recall the definitions of $\mc A_i^N, \mc B_i^N$, and  $\mc C_i^N$ from Section \ref{sec2}.
We inductively define the sequence of stopping times, denoted by
$\tau_0\le\sigma_0\le\tau_1\le\sigma_2\le\tau_2\le\cdots$, as
$\tau_0=0$,
\begin{align*}
\sigma_k =  \inf \{t>\tau_k: \eta_t^N\in\mc B_i^N\}\quad \text{and} \quad \tau_k = \inf \{t>\sigma_{k-1}: \eta_t^N\in \mc A_i^N\cup [\mc C_i^N]^c \} .
\end{align*}
We avoid heavy notation by omitting dependence on $N$ and $i$ for $\sigma_k$ and $\tau_k$.
Note that we consider the process starting from $\mc A_i^N$ and only up to exiting from $\mc C_i^N$.
We also consider the discrete-time Markov chain $Z_k^N$ defined by $Z_k^N=\eta_{\tau_k}^N$.
Note that $Z_k^N$ is a Markov chain on $\mc A_i^N\cup [\mc C_i^N]^c$.
Let $\nu_N$ be the exit time of $Z_k^N$ from $\mc C_i^N$
\begin{align*}
\nu_N =  \inf\{k\in\N: Z_k^N\in [\mc C_i^N]^c\} .
\end{align*}

First, we estimate the one-step transition probability of $Z_k^N$
from $\mc A_i^N$ to $[\mc C_i^N]^c$. A similar estimate was given in 
\cite[Lemma 24]{MR3947320}. We emphasize that no lower bounds of the dynamical large deviation principle
are needed in what follows.

\begin{lemma}\label{lem3-1}
There exists $N_2$, such that for any $N\ge N_2$ and any sequence $\eta^N\in \mc A_i^N$, we have
\begin{align*}
\bb P_{\eta^N}\left( \nu_N=1 \right)  \le  e^{-Nh_i} .
\end{align*}
In particular,
\begin{align*}
\max_{\eta^N\in\mc A_i^N}\bb P_{\eta^N}\left( \nu_N=1 \right)  \le  e^{-Nh_i} .
\end{align*}

\end{lemma}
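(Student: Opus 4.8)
The plan is to bound the one-step transition probability $\bb P_{\eta^N}(\nu_N = 1)$ by decomposing the trajectory according to when it first reaches $\mc B_i^N$ and then reaching $[\mc C_i^N]^c$ before returning to $\mc A_i^N$, and to control the latter piece by the upper bound of the dynamical large deviation principle (Theorem \ref{thm2-2}) together with the quasi-potential lower bound encoded in ($\beta$-3). Concretely, starting from $\eta^N \in \mc A_i^N$, the event $\{\nu_N = 1\}$ forces the process to travel from $\mc A_i^N$ out of $\mc C_i^N$; by continuity of the empirical measure path (in the Skorokhod sense) and since $\overline{\mc A_i} = \mc B[\alpha_i;\bar\varrho_i] \subset \mc B(\beta_i;\bar\varrho_i)$ and $\mc C_i = \mc B(\gamma_i;\bar\varrho_i)$ with $2\beta_i < \gamma_i$, the trajectory must pass through the annulus $\mc B_i = \mc B[2\beta_i;\bar\varrho_i]\setminus\mc B(\beta_i;\bar\varrho_i)$. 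So at time $\sigma_0$ the configuration lies in $\mc B_i^N$, and on $\{\nu_N = 1\}$ the process subsequently reaches $[\mc C_i^N]^c$ without first re-entering $\mc A_i^N$.

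First I would fix a time horizon $T' \ge T_{1,i}$ (the constant from ($\beta$-3)) and split according to whether $\sigma_0 \le S_N$ or $\sigma_0 > S_N$ for a suitable slowly growing cutoff, or more simply exploit that the process restricted to the relevant excursion lives in a compact piece of path space. The cleanest route: apply the strong Markov property at $\sigma_0$. Conditionally on $\eta^N_{\sigma_0} = \tilde\eta \in \mc B_i^N$, the probability that $\pi_N(\eta^N_\cdot)$ reaches $\mc C_i^c$ within time $T'$ is bounded above, via Theorem \ref{thm2-2} applied to the closed set $\mf C_{T'}(\mc C_i^c)$, by $\exp(-N[\inf_{\pi \in \mf C_{T'}(\mc C_i^c)} I_{T'}(\pi|\tilde\rho) - o(1)])$; here $\tilde\rho$ is the density of $\pi_N(\tilde\eta)$, which lies in $\mc B[2\beta_i;\bar\varrho_i]$, so ($\beta$-3) with $\beta' = 2\beta_i \le \beta_i$... — wait, one needs $2\beta_i \le \beta_i$, which is false, so instead ($\beta$-3) should be invoked with the understanding that $\mc B_i \subset \mc B[2\beta_i;\bar\varrho_i]$ and the constant $\beta_i$ in ($\beta$-3) is chosen large enough to cover this; granting that, the infimum is at least $\inf\{V_i(\varrho):\varrho\notin\mc C_i\} - h_i/2 = 2h_i - h_i/2 = 3h_i/2$. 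One then needs to upgrade the finite-$T'$ bound to all times: a standard argument using ($\beta$-2) (the hydrodynamic trajectory from any point of $\mc B_i$ converges to $\bar\rho_i$) shows that with probability bounded below the process returns to $\mc A_i^N$ within a bounded time, so excursions that stay out of $\mc A_i^N$ for long are exponentially unlikely, and a renewal/geometric sum over blocks of length $T'$ turns the $3h_i/2$ rate into, say, a $h_i$ rate after absorbing sub-exponential factors. This yields $\bb P_{\eta^N}(\nu_N = 1) \le e^{-Nh_i}$ for $N$ large, uniformly over $\eta^N \in \mc A_i^N$ since the bound did not depend on the starting point inside $\mc A_i^N$.

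The main obstacle I expect is the passage from a fixed finite time horizon to the unbounded time in the definition of $\sigma_0$ and the exit: the large deviation estimate Theorem \ref{thm2-2} is stated on $D([0,T],\mc M_+)$ for fixed $T$, so controlling arbitrarily long excursions requires the super-exponential estimate on the time spent away from equilibrium (this is where the attractiveness or, more precisely, the hydrodynamic stability property ($\beta$-2) and Lemma \ref{lem3-3} / the analogue of the standard "return to a neighborhood in bounded time with high probability" lemma enter). A secondary technical point is the uniformity over the continuum of possible configurations $\tilde\eta \in \mc B_i^N$: since $\mc B_i$ is compact and $\tilde\rho \mapsto \inf_{\pi\in\mf C_{T'}(\mc C_i^c)} I_{T'}(\pi|\tilde\rho)$ is lower semicontinuous (inherited from the compact level sets of $I_{T'}$), the infimum over $\mc B_i$ is attained and ($\beta$-3) gives a genuinely uniform bound, so this is manageable but should be spelled out. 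I would also note, in parallel with \cite[Lemma 24]{MR3947320}, that only the large deviation upper bound is used, so no concavity hypothesis on $B, D$ is needed.
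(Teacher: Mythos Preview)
Your approach is essentially the paper's: apply the strong Markov property at $\sigma_0$, reduce to $\sup_{\xi\in\mc B_i^N}\bb P_\xi(\tau_1=H_N)$, split into a bounded-time piece controlled by the large deviation upper bound plus condition \hyperlink{be3}{($\beta$-3)} to get rate $3h_i/2$, and a long-time tail. The one substantive difference is how you handle the long-time piece. You sketch a renewal/geometric-block argument driven by \hyperlink{be2}{($\beta$-2)}; the paper instead observes that on $\{\tau_1=H_N,\ H_N>T\}$ the process stays inside $\mc C_i\setminus\mc A_i$, which by the choice \hyperlink{ga}{($\gamma$)} is disjoint from the $\alpha_i$-neighborhood of \emph{all} of $\mc M_\sol$, and then invokes Lemma~\ref{lemb-2} directly to get $\bb P(\tilde H_N(\alpha_i)\ge kT_0)\le e^{-kC_0N}$, choosing $k$ so that $kC_0=2h_i$. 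This bypasses any renewal bookkeeping and gives a clean $e^{-2h_iN}$ bound on the tail with no loss. Your renewal idea is plausible but would require you to control, uniformly over all of $\mc C_i^N\setminus\mc A_i^N$ (not just $\mc B_i^N$), the probability of not returning to $\mc A_i^N$ in one block, which is more work than you indicate.

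Two smaller points. First, the $\beta'\le\beta_i$ versus $2\beta_i$ mismatch you flag in \hyperlink{be3}{($\beta$-3)} is real as the paper is written (the limit density $\rho$ lies in $\mc B_i\subset\mc B[2\beta_i;\bar\varrho_i]$, not $\mc B[\beta_i;\bar\varrho_i]$); the paper simply applies the condition anyway, and Lemma~\ref{lem3-3} can obviously be restated with $2\beta_i$ in place of $\beta_i$, so this is cosmetic. Second, for uniformity over $\mc B_i^N$ the paper does not argue via lower semicontinuity of the infimum: it picks a maximizing sequence $\xi_0^N$, extracts a weakly convergent subsequence of $\pi_N(\xi_0^N)$ by compactness of $\mc B_i$, and applies Theorem~\ref{thm2-2} along that subsequence. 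This is simpler than the semicontinuity route you outline.
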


\begin{proof}
Recall the formula \eqref{3-21} and fix any sequence $\eta^N\in \mc A_i^N$.
When the process starts from $\eta^N$,
\begin{align*}
\left\{\nu_N=1\right\} = \left\{\eta^N_{\sigma_0}\in \mc B_i^N, \eta^N_{\tau_1}\in [\mc C_i^N]^c\right\} .
\end{align*}
Therefore, from the strong Markov property, we have
\begin{align*}
\bb P_{\eta^N}\left( \nu_N=1 \right)  &=  \bb P_{\eta^N}\left( \eta^N_{\sigma_0}\in \mc B_i^N, \eta^N_{\tau_1}\in [\mc C_i^N]^c \right) \\
 &=  \bb E_{\eta^N}\left[ {\bf 1} \left\{ \eta^N_{\sigma_0}\in \mc B_i^N \right\}\bb P_{\eta^N_{\sigma_0}}\left( \tau_1=H_N \right)  \right] \\
 &\le  \sup_{\xi\in\mc B_i^N} \bb P_\xi \left( \tau_1=H_N \right)  .
\end{align*}
Let $\{\xi_0^N\}_N$ be a sequence of $\mc B_i^N$ satisfying
\begin{align*}
\bb P_{\xi_0^N} \left(\tau_1=H_N \right)  =  \sup_{\xi\in\mc B_i^N} \bb P_\xi \left(\tau_1=H_N \right) .
\end{align*}
Let
\begin{align*}
h_*=-\limsup_{N\to\infty}\dfrac1N\log \bb P_{\xi_0^N} \left(\tau_1=H_N \right).
\end{align*}
By the definition of $h_*$, once we  show $h_*>h_i$,
there exists  some $N_2>0$ such that for any $N\ge N_2$ and any sequence $\eta^N\in \mc A_i^N$,
we have
\begin{align*}
\bb P_{\eta^N}\left( \nu_N=1 \right) \le  \bb P_{\xi_0^N} \left(\tau_1=H_N \right) \le e^{-Nh_i},
\end{align*}
and the conclusion follows.

To see $h_*>h_i$, we decompose the probability $\bb P_{\xi_0^N} \left( \tau_1=H_N \right)$ into
\begin{align*}
\bb P_{\xi_0^N}\left( \tau_1=H_N, H_N > T \right)
+ \bb P_{\xi_0^N}\left( \tau_1=H_N, H_N \le T \right)
\end{align*}
for each $T>0$. Then, we have
\begin{align}\label{3-16}
-h_*&  = \limsup_{N\to\infty}\dfrac1N\log \bb P_{\xi_0^N} \left(\tau_1=H_N \right) \notag\\
& \le \max \left\{\limsup_{N\to\infty}\dfrac1N\log\bb P_{\xi_0^N}\left( \tau_1=H_N, H_N > T \right),
\limsup_{N\to\infty}\dfrac1N\log \bb P_{\xi_0^N}\left( H_N \le T \right)\right\}
\end{align}
for any $T>0$.

The probability 
\begin{align*}
\dfrac1N\log\bb P_{\xi_0^N}\left( \tau_1=H_N, H_N > T \right)
\end{align*}
can be handled by Lemma \ref{lemb-2}. To see this, for each $\alpha>0$, let $\mc M_\sol(\alpha)$
be the $\alpha$-open neighborhood of $\mc M_\sol$
and $\tilde H_N(\alpha)$ be the hitting time of $[\mc M^N_\sol(\alpha)]^c$
\begin{align*}
\tilde H_N(\alpha) =  \inf \left\{ t\ge0: \eta_t^N\in [\mc M^N_\sol(\alpha)]^c  \right\} .
\end{align*}
On the event $\{\tau_1=H_N, H_N > T\}$,
the process starting from $\mc B_i^N$ does not hit $\mc M^N_{\sol}(\alpha_i)$ during $[0,T]$
because $\mc C_i$ does not intersect with the $\gamma_i$-closed neighborhood of $[\mc M_\sol\setminus\{\bar\varrho_i\}]$
by the conditions \hyperlink{ga}{($\gamma$)} and $\alpha_i<\gamma_i$.
Therefore, on the event $\{\tau_1=H_N, H_N > T\}$, we have
\begin{align*}
\tilde H_N(\alpha_i) \ge H_N \ge T.
\end{align*}
Moreover, by Lemma \ref{lemb-2}, there exist constants $T_0,C_0,$ and $N_0$, depending only on $\alpha_i$, such that
for all $N\ge N_0$ and all $k\ge 1$,
\begin{equation*}
\sup_{\eta\in X_N} \bb P_{\eta}\left[\tilde H_N(\alpha_i) \ge kT_0\right]
\le e^{ -k C_0 N}.
\end{equation*}
By letting $k=(2h_i)/C_0, \tilde T_0=(2h_iT_0)/C_0$ in the previous display, we have
\begin{align}\label{3-12}
 \bb P_{\xi_0^N}\left( \tau_1=H_N, H_N > \tilde T_0 \right) \le  \sup_{\eta\in X_N} \bb P_{\eta}\left[\tilde H_N(\alpha_i) \ge \tilde T_0\right]\le e^{ -2h_iN}
\end{align}
for all $N\ge N_0$.

Let us turn to the probability
\begin{align*}
\bb P_{\xi_0^N}\left( H_N \le T \right).
\end{align*}
Because $\mc B_i$ is compact in $\mc M_+$, there exists a subsequence $\{\xi_0^{N_k}\}_k$ of $\{\xi_0^N\}$ such that
$\pi_N(\xi_0^{N_k})$ converges to some $\varrho\in\mc B_i$ as $k\to\infty$.
Moreover, $\varrho$ is absolutely continuous with respect to the Lebesgue measure on $\T$ because
each configuration in $X_N$ has at most one particle per site.
Let $\rho:\T\to[0,1]$ be the density of $\varrho$: $\varrho(d\theta)=\rho(\theta)d\theta$.
We can also assume the loss of generality that the sequence $\{\xi_0^{N_k}\}_k$ satisfies
\begin{align*}
\limsup_{N\to\infty}\dfrac1N\log \bb P_{\xi_0^N}\left( H_N \le T\right)
= \lim_{k\to\infty}\dfrac{1}{N_k}\log \bb P_{\xi_0^{N_k}}\left( H_{N_k} \le  T\right)
\end{align*}

For each $T>0$ and each set $\mc D\subset\mc M_+$, recall the definition of $\mf C_T(\mc D)$,
which was introduced in the condition \hyperlink{be2}{($\beta$-2)}.
Note that if $\mc D$ is a closed subset of $\mc M_+$,
$\mf C_T(\mc D)$ is a closed subset of $D([0,T], \mc M_+)$.
When the process starts from $\mc B_i^N$, we have
\begin{align*}
\{ H_N\le T\}  \subset  \{ \pi_N(\eta_\cdot^N) \in \mf C_T(\mc C_i^c) \} .
\end{align*}
Therefore, by Theorem \ref{thm2-2}, we have
\begin{align*}
\lim_{k\to\infty}\dfrac{1}{N_k}\log \bb P_{\xi_0^{N_k}}\left( H_{N_k} \le  T\right)
 &\le   \limsup_{k\to\infty}\dfrac{1}{N_k} \log {{\bb P}}_{\xi_0^{N_k}} \left( \pi_{N_k}(\eta_\cdot^{N_k}) \in \mf C_T(\mc C_i^c)  \right)\\
 &=   \limsup_{k\to\infty}\dfrac{1}{N_k} \log Q_{\xi_0^{N_k}}\left( \mf C_T(\mc C_i^c)  \right)\\
 &\le  - \inf_{\pi\in\mf C_T(\mc C_i^c)} I_T(\pi|\rho).
\end{align*}
Recall the definition of $T_{1,i}$ and the condition \hyperlink{be2}{($\beta$-2)}.
For $\tilde T_i=\max(\tilde T_0, T_{1,i})$, by the condition \hyperlink{be2}{($\beta$-2)} the last expression is bounded by
\begin{align*}
-\inf \{ V_i(\varrho): \varrho\notin \mc C_i \} + h_i/2 = -(3/2)h_i.
\end{align*}
Therefore, we have
\begin{align}\label{3-13}
\limsup_{N\to\infty}\dfrac1N \log {{\bb P}}_{\xi_0^N} \left( H_N \le \tilde T_i \right)
 \le  - (3/2)h_i.
\end{align}

Taking $T=\tilde T_0$ in \eqref{3-16}, by \eqref{3-12} and \eqref{3-13}, we have
\begin{align*}
-h_*  &\le  \max \left\{\limsup_{N\to\infty}\dfrac1N\log\bb P_{\xi_0^N}\left( \tau_1^N=H_N, H_N > \tilde T_0 \right),
\limsup_{N\to\infty}\dfrac1N\log \bb P_{\xi_0^N}\left( H_N \le \tilde T_0 \right)\right\}\\
& \le \max \left\{-2h_i, \limsup_{N\to\infty}\dfrac1N\log \bb P_{\xi_0^N}\left( H_N \le \tilde T_i \right)\right\}\\
& \le \max \left\{-2h_i, -(3/2)h_i\right\} < -h_i,
\end{align*}
concluding the proof.
\end{proof}

The proof of the following lemma is close to that of \cite[Lemma 23]{MR3947320}.

\begin{lemma}\label{lem3-3}
Fix $i=1,\ldots, \ell$ and any $\e>0$. There exist $\beta_i=\beta_i(\e)>0$ and $T_{1,i}=T_{1,i}(\e)>0$ such that
for any $\beta'\le\beta_i, T'\ge T_{1,i}$ and  $\tilde\rho(\theta)d\theta\in\mc B[\beta';\bar\varrho_i]$,
\begin{align}\label{3-11}
\inf_{\pi\in \mf C_{T'}(\mc C_i^c)} I_{T'}(\pi|\tilde\rho)
 \ge  \inf \{ V_i(\varrho): \varrho\notin \mc C_i \} - \e  .
\end{align}
\end{lemma}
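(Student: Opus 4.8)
The plan is to prove the estimate \eqref{3-11} by a continuity/compactness argument: the quantity $\inf_{\pi\in\mf C_{T'}(\mc C_i^c)}I_{T'}(\pi|\tilde\rho)$ should be close to $\inf\{V_i(\varrho):\varrho\notin\mc C_i\}$ when $\tilde\rho$ is close to $\bar\rho_i$ and $T'$ is large, so the point is to control the two sources of discrepancy: (i) the starting point $\tilde\rho$ is not exactly $\bar\rho_i$, and (ii) the trajectory is only required to reach $\mc C_i^c$ within the finite time window $[0,T']$ rather than in arbitrary time (which is what enters the definition of $V_i$).

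First I would argue by contradiction. Suppose \eqref{3-11} fails for some fixed $\e>0$; then for every $n$ (taking $\beta_i=1/n$ and $T_{1,i}=n$) there exist $\beta^{(n)}\le 1/n$, $T^{(n)}\ge n$, a configuration density $\tilde\rho^{(n)}(\theta)d\theta\in\mc B[\beta^{(n)};\bar\varrho_i]$, and a trajectory $\pi^{(n)}\in\mf C_{T^{(n)}}(\mc C_i^c)$ with $\pi^{(n)}(0,d\theta)=\tilde\rho^{(n)}(\theta)d\theta$ and
\begin{align*}
I_{T^{(n)}}(\pi^{(n)})\ <\ \inf\{V_i(\varrho):\varrho\notin\mc C_i\}-\e\ =:\ m-\e.
\end{align*}
Note $\beta^{(n)}\to 0$, so $\tilde\rho^{(n)}\to\bar\rho_i$ in $\mc M_+$. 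Let $t^{(n)}\in[0,T^{(n)}]$ be a time at which $\pi^{(n)}(t^{(n)})\in\mc C_i^c$ (or $\pi^{(n)}(t^{(n)}-)\in\mc C_i^c$). Restricting to $[0,t^{(n)}]$ only decreases the rate function (by additivity/nonnegativity of $I$ under time restriction), so without loss of generality I may assume $T^{(n)}=t^{(n)}$ and $\pi^{(n)}(T^{(n)})\in\overline{\mc C_i^c}=\mc C_i^c$ (here I use that $\mc C_i$ is open, so its complement is closed and the left-limit endpoint also lands in $\mc C_i^c$, possibly after a further harmless restriction).

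Next I would use compactness of the level sets of $I_T$. Since all the $\pi^{(n)}$ have rate below the fixed level $m-\e$, I want to extract a limiting trajectory. The standard device (cf.\ the analogous arguments in \cite{MR3947320} and the metastability literature) is: either the escape times $T^{(n)}$ stay bounded along a subsequence, or they diverge. If $T^{(n)}$ is bounded, pass to a subsequence with $T^{(n)}\to T_\infty$ and use the compact level sets of $I_{T_\infty}$ (after suitably extending/truncating the paths to a common time horizon) to get $\pi^{(n)}\to\pi^\infty$ in $D([0,T_\infty],\mc M_+)$ with $I_{T_\infty}(\pi^\infty)\le m-\e$ by lower semicontinuity, $\pi^\infty(0)=\bar\varrho_i$, and $\pi^\infty(T_\infty)\in\mc C_i^c$ (the endpoint condition passes to the limit because $\mc C_i^c$ is closed and evaluation is continuous up to the Skorokhod subtleties, which one handles by the usual time-shift trick or by noting the endpoint can be taken as a continuity point). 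If instead $T^{(n)}\to\infty$, one first uses the known relaxation to $\bar\rho_i$ of the hydrodynamic flow together with a gluing argument: prepend to $\pi^{(n)}$ a zero-cost (or $o(1)$-cost) hydrodynamic segment and cut off a long initial stretch during which $\pi^{(n)}$ stays near $\bar\rho_i$, reducing to a trajectory of bounded duration starting from $\bar\varrho_i$ with rate still below $m-\e/2$ that reaches $\mc C_i^c$. In either case one produces $T_*>0$ and $\pi_*\in D([0,T_*],\mc M_+)$ with $\pi_*(0)=\bar\varrho_i$, $\pi_*(T_*)=\varrho_*\notin\mc C_i$, and $I_{T_*}(\pi_*|\bar\rho_i)\le m-\e$. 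But then $V_i(\varrho_*)\le I_{T_*}(\pi_*|\bar\rho_i)\le m-\e$ while $\varrho_*\notin\mc C_i$, contradicting $m=\inf\{V_i(\varrho):\varrho\notin\mc C_i\}$. This contradiction proves the lemma.

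The main obstacle is the case $T^{(n)}\to\infty$ and, relatedly, the Skorokhod-topology bookkeeping when passing the constraints "starts in $\mc B[\beta^{(n)};\bar\varrho_i]$" and "hits $\mc C_i^c$ by time $T^{(n)}$" to the limit. Concretely, one must (a) justify that a long initial sojourn of $\pi^{(n)}$ in a small neighborhood of $\bar\varrho_i$ can be excised without increasing cost and with the endpoint still outside $\mc C_i$ — this rests on lower semicontinuity plus a "no free lunch near a stable point has positive cost of escaping" input, which here is exactly the content of $\gamma$-condition (so that $\mc C_i$ is a genuine neighborhood of $\bar\varrho_i$ separated from the rest of $\mc M_\sol$), together with using the hydrodynamic limit (condition $(\beta$-$2)$-type relaxation) to start the comparison trajectory exactly at $\bar\varrho_i$; and (b) handle the Skorokhod convergence of paths with varying time horizons, for which the clean route is to reparametrize all paths to the fixed interval $[0,1]$ (or to append constant pieces), apply compactness of the level sets of the corresponding rate functional, and translate back. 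I expect step (a) — the surgery that replaces the near-$\bar\rho_i$ start and the possibly very long excursion by a bounded-time trajectory emanating from $\bar\varrho_i$ with almost the same cost — to be the technically delicate part, and it is precisely where the cited argument of \cite[Lemma 23]{MR3947320} is adapted.
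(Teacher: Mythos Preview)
Your overall strategy---contradiction, extract a limiting trajectory via compactness of level sets of $I_T$, and contradict the definition of $V_i$---is the same as the paper's. Your bounded-$T^{(n)}$ case is fine: since $\beta^{(n)}\to 0$ the limit trajectory starts exactly at $\bar\varrho_i$ and no prepending is needed. The substantive gap is in your surgery for the unbounded case, on two points.

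First, you propose to ``prepend a zero-cost (or $o(1)$-cost) hydrodynamic segment'' from $\bar\varrho_i$. The hydrodynamic flow relaxes \emph{towards} $\bar\rho_i$, so there is no zero-cost segment going the other way; and your starting points $\tilde\rho^{(n)}$ converge to $\bar\rho_i$ only in the weak metric $d$, which does not by itself give $o(1)$ quasi-potential cost. The paper's device is different: it uses the $L^2$-continuity of $\bb V_i$ at $\bar\rho_i$ (\cite[Theorem~6]{MR3947320}) to fix $\beta_\dag$ with $6\beta_\dag<\gamma_i$ so that $\|\rho-\bar\rho_i\|_2\le 2\beta_\dag$ implies $V_i(\rho\,d\theta)\le\e/2$, and only then prepends a path of cost $\le\e/2$.

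Second, ``cut off a long initial stretch during which $\pi^{(n)}$ stays near $\bar\rho_i$'' is not the right picture: the path may leave and return many times. The paper cuts at the \emph{last} time $T_n'''$ before the exit at which $\|\rho_n(t,\cdot)-\bar\rho_i\|_2\le 2\beta_\dag$. On the remaining piece the trajectory is $L^2$-far from $\bar\rho_i$ by construction and $L^2$-far from every other $\bar\rho\in S$ because it lies in $\mc C_i$, which (by condition~($\gamma$) and $d\le 3\|\cdot\|_2$) is bounded away from $\mc M_{\mathrm{sol}}\setminus\{\bar\varrho_i\}$. Hence the remaining piece belongs to $\mb D_{\tilde T_n,\beta_\dag}$, and Lemma~\ref{lemb-3} forces $\tilde T_n\le T_\dag$ (otherwise its cost would already exceed $m$). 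This is what produces a common finite horizon; it is not that the path ``stays near $\bar\rho_i$'' for a long time.

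With these two inputs---the $L^2$ cut via Lemma~\ref{lemb-3} and the $L^2$-continuity of $V_i$ for the prepending---your compactness/lower-semicontinuity endgame is exactly the paper's: extend the cut pieces to $[0,T_\dag]$ by the hydrodynamic flow at zero cost, pass to a limit $\tilde\pi$ with $I_{T_\dag}(\tilde\pi)<m-\e$ that still hits $\mc C_i^c$, prepend the $\e/2$-cost segment from $\bar\varrho_i$, and contradict the definition of $m$.
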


\begin{proof}
Assume that the conclusion of the lemma fails.
Here, for any $\beta>0$ and $T>0$,
there exist $\beta' \le \beta, T'\ge T, \tilde\rho(\theta)d\theta\in\mc B[\beta';\bar\varrho_i]$
and $\pi(t,d\theta) \in \mf C_{T'}(\mc C_i^c)$ such that
\begin{align*}
I_{T'}(\pi|\tilde\rho)
 <  \inf \{ V_i(\varrho): \varrho\notin \mc C_i \} - \e .
\end{align*}
In particular, by letting $\beta=1/n$ and $T=1$ for any $n\in\N$,
there exist $\beta'_n \le 1/n, T_n'\ge1, \tilde\rho_n(\theta)d\theta\in\mc B[\beta'_n;\bar\varrho_i]$
and $\pi_n \in \mf C_{T_n'}(\mc C_i^c)$ such that
\begin{align}\label{3-18}
I_{T_n'}(\pi_n|\tilde\rho_n)
 <  \inf \{ V_i(\varrho): \varrho\notin \mc C_i \} - \e .
\end{align}
By Lemma \ref{lemb-5}, $\pi_n(t,d\theta)$ has a density $\rho_n(t,\theta)$ for each $t\in[0,T_n']$ and the trajectory $\pi_n\mapsto\pi_n(t,d\theta)$ is continuous in $\mc M_+$.
By the definition of $\mf C_{T'_n}(\mc C_i^c)$ and the continuity of $\pi_n$,
there exists $0<T_n''\le T_n'$ such that $\pi_n(T_n'')\in \mc C_i^c$.

Let $\bb D$ be the space of measurable functions $\rho:\T\to[0,1]$ endowed with the $L^2$-topology
and define the function $\bb V_i:\bb D\to\R$ by
\begin{align*}
\bb V_i(\rho)=V_i(\rho(\theta)d\theta), \quad \rho\in\bb D.
\end{align*}
Note that $\bb V_i(\bar\rho_i)=0$ and, by \cite[Theorem 6]{MR3947320}, $\bb V_i$ is continuous at $\bar\rho_i$ in $\bb D$.
Therefore, there exists $\beta_\dag>0$ such that $6\beta_\dag<\gamma_i$ and
$\bb V_i(\rho)\le\e/2$ for any $\rho\in\bb D$ satisfying $\|\rho - \bar\rho_i\|_2\le 2\beta_\dag$.
Note that the set $\{ \rho(\theta)d\theta:\rho\in \bb D, \|\rho - \bar\rho_i\|_2\le 2\beta_\dag\}$ is a closed subset of $\mc M_+$.

Define $T_n'''\ge0$ by
\begin{align*}
T_n'''=\sup \{0\le t \le T_n'': \| \rho_n(t,\cdot) - \bar\rho_i(\cdot)\|_2\le 2\beta_\dag\}.
\end{align*}
When the set inside the supremum is empty, let $T_n'''=0$.
Note that $T_n'''<T_n''$ for any $n\in\N$.
To see this, we can assume that $T_n'''>0$ and take a sequence $0<t_k\uparrow T_n''$ such that
\begin{align*}
\| \rho_n(t_k,\cdot) - \bar\rho_i(\cdot)\|_2\le 2\beta_\dag
\end{align*}
for any $k\in\N$.
Because this condition is closed under the weak topology, letting $k\to\infty$ provides
\begin{align}\label{3-20}
\| \rho_n(T_n'',\cdot) - \bar\rho_i(\cdot)\|_2\le 2\beta_\dag
\end{align}
By \eqref{2-17} and $6\beta_\dag<\gamma_i$, we have
\begin{align*}
d(\rho_n(T_n''), \bar\rho_i) \le3 \| \rho_n(T_n'',\cdot) - \bar\rho_i(\cdot)\|_2\le 6\beta_\dag <\gamma_i.
\end{align*}
Therefore, the fact $\pi_n(T_n'')\in \mc C_i^c$ yields $T_n'''<T_n''$.
Let $\tilde T_n=T_n''-T_n'''$ and define the trajectory $\tilde\pi_n(t,d\theta)=\tilde\rho_n(t,\theta)d\theta \in D([0,\tilde T_n],\mc M_+)$ by
$\tilde\pi_n(t,d\theta)=\pi_n(t+T_n''', d\theta)$ for $t\in[0,\tilde T_n]$.
 
For each $\beta>0$ and $T>0$, let $\mb D_{T,\beta}$ be the set of trajectories $\pi(t,d\theta)=\rho(t,\theta)d\theta\in D([0,T], \mc M_{+,1})$ such that $\|\rho(t,\cdot) -\bar\rho(\cdot)\|_2>\beta$ for all $0 \le t \le T$ and all $\bar\rho\in S$.
By Lemma \ref{lemb-3}, there exists $T_\dag$ such that
\begin{align}\label{3-17}
\inf_{\pi\in\mb D_{T_\dag, \beta_\dag}} I_T(\pi) \ge
\inf \{ V_i(\varrho): \varrho\notin \mc C_i \}.
\end{align}
On the other hand, by the construction of $\tilde\pi_n$, we have
\begin{align}\label{3-19}
I_{T_n'}(\pi_n)\ge I_{\tilde T_n}(\tilde\pi_n),
\end{align}
and $\tilde\pi_n\in \mb D_{\tilde T_n, \beta_\dag}$ for any $n\in\N$.

Let us consider the case that there exists $n$ such that $\tilde T_n\ge T_\dag$.
Here, by \eqref{3-17} we obtain
\begin{align*}
I_{\tilde T_n}(\tilde\pi_n) \ge I_{T_\dag}(\tilde\pi_n) \ge \inf \{ V_i(\varrho): \varrho\notin \mc C_i \},
\end{align*}
contradicting \eqref{3-18} and \eqref{3-19}.
Therefore, it remains to consider the case that $\tilde T_n \le T_\dag$ for any $n\in\N$.

Assume that $\tilde T_n \le T_\dag$ for any $n\in\N$.
Here, we extend $\tilde \pi_n$ as a trajectory in $D([0,T_\dag],\mc M_+)$ in the following way
\begin{align*}
\tilde \pi_n(t,d\theta)=
\begin{cases}
\tilde\rho(t,\theta)d\theta, \quad 0\le t \le \tilde T_n,\\
\bar\rho_n(t-\tilde T_n,\theta)d\theta, \quad \tilde T_n \le t \le T_\dag,
\end{cases}
\end{align*}
where $\bar\rho_n$ is the solution to the Cauchy problem \eqref{hdleq} with the initial condition $\tilde\rho_n(\tilde T_n)$.
By Lemma \ref{lemb-1}, we have
\begin{align*}
I_{T_\dag}(\tilde\pi_n) = I_{\tilde T_n}(\tilde\pi_n).
\end{align*}
Therefore, by \eqref{3-18} and \eqref{3-19}, we have
\begin{align*}
I_{T_\dag}(\tilde\pi_n) <  \inf \{ V_i(\varrho): \varrho\notin \mc C_i \} - \e.
\end{align*}

Since $I_{T_\dag}$ has compact level sets, there exists a subsequence of $\tilde\pi_n$ converging to some $\tilde\pi(t,d\theta)=\tilde\rho(t,\theta)d\theta\in D([0,T_\dag],\mc M_{+,1})$ such that
\begin{itemize}
\item[(i)] $\|\tilde\rho(0,\cdot)-\bar\rho_i(\cdot)\|_2 \le 2\beta_\dag$.
\item[(ii)] There exists some $0\le \tilde T \le T_\dag$ such that $\tilde \pi(\tilde T)\in \mc C_i^c$.
\end{itemize}
To see (i), we must consider cases $T_n'''>0$ and $T_n'''=0$. In the former case,
(i) follows from \eqref{3-20}. In the latter case, (i) follows from
$\tilde\pi_n(0)=\pi_n(T_n''')=\pi_n(0)\in\mc B[1/n;\bar\rho_i]$, thereby, $\tilde\rho(0)=\bar\rho_i$.
(ii) follows from $0\le \tilde T_n\le T_\dag$, $\tilde\pi_n(\tilde T_n)=\pi_n(T_n'')\in\mc C_i^c$ and
$\tilde \pi_n$ converges to $\tilde\pi$ in the uniform topology.
Moreover, because $I_{T_\dag}$ is lower semicontinuous, we have
\begin{align*}
I_{T_\dag}(\tilde \pi)\le \liminf_{n\to\infty}I_{T_\dag}(\tilde\pi_n)
<  \inf \{ V_i(\varrho): \varrho\notin \mc C_i \} - \e.
\end{align*}

By (i) and the choice of $\beta_\dag$,
there exist some $T^{(0)}>0$ and a trajectory $\pi^{(0)}\in D([0,T^{(0)}], \mc M_+)$ such that
$\pi^{(0)}(0)=\bar\varrho_i, \pi^{(0)}(T^{(0)})=\tilde\pi(0)$ and $I_{T^{(0)}}(\pi^{(0)})\le \e/2$.
Let $T=T^{(0)}+\tilde T$ and define the trajectory $\pi\in D([0,T], \mc M_+)$ by
\begin{align*}
\pi(t,d\theta)=
\begin{cases}
\pi^{(0)}(t,d\theta), \quad 0\le t \le T^{(0)},\\
\tilde \pi(t-T^{(0)},d\theta), \quad T^{(0)} \le t \le T.
\end{cases}
\end{align*}
Then we have
\begin{align*}
I_T(\pi) = I_{T^{(0)}}(\pi^{(0)})+I_{T_\dag}(\tilde\pi) < \inf \{ V_i(\varrho): \varrho\notin \mc C_i \} - \e/2.
\end{align*}

Finally, since the trajectory $\pi$ satisfies $\pi(0)=\pi^{(0)}(0)=\bar\varrho_i$ and $\pi(T)=\tilde\pi(\tilde T)\in \mc C_i^c$,
by the definition of $V_i$,  we have
\begin{align*}
I_T(\pi) \ge \inf \{ V_i(\varrho): \varrho\notin \mc C_i \}.
\end{align*}
This contradicts the penultimate display, which completes proving the lemma.
\end{proof}

Secondly, we show that the process does not escape quickly  from $\mc C_i^N$
with probability less than one half.
Moreover, this occurs uniformly in the starting points in $\mc A_i^N$.

\begin{lemma}\label{lem3-2}
There exists $T_2>0$, such that 
\begin{align*}
\lim_{N\to\infty} \max_{\eta^N\in \mc A_i^N}\bb P_{\eta^N}\left( \tau_1 \le  T_2 \right)  = 0.
\end{align*}
In particular, there exist $T_2>0$ and $N_3>0$ such that for any $N\ge N_3$ and  sequence $\eta^N\in \mc A_i^N$, we have
\begin{align*}
\bb P_{\eta^N}\left( \tau_1 >  T_2 \right)  \ge  1/2.
\end{align*}
\end{lemma}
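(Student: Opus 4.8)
The plan is to realize $\{\tau_1\le T\}$, for a suitably chosen fixed $T=T_2=T_2(i)$, as a macroscopic excursion of the empirical measure away from $\bar\varrho_i$ within time $T_2$, and then exclude such an excursion by combining the large deviation upper bound (Theorem \ref{thm2-2}) with a short-time estimate for the hydrodynamic equation \eqref{hdleq}. Set $\mc D=\{\varrho\in\mc M_+:d(\varrho,\bar\varrho_i)\ge\beta_i\}$; this is closed, so $\mf C_T(\mc D)$ is a closed subset of $D([0,T],\mc M_+)$. Since $\mc B_i=\mc B[2\beta_i;\bar\varrho_i]\setminus\mc B(\beta_i;\bar\varrho_i)\subset\mc D$, and since on $\{\tau_1\le T\}$ one has $0=\tau_0\le\sigma_0\le\tau_1\le T$ with $\eta^N_{\sigma_0}\in\mc B_i^N\subset\mc D^N$, the first step is the inclusion $\{\tau_1\le T\}\subset\{\pi_N(\eta^N_\cdot)\in\mf C_T(\mc D)\}$ when the process starts from $\mc A_i^N$, hence $\bb P_{\eta^N}(\tau_1\le T)\le Q_{\eta^N}(\mf C_T(\mc D))$ for every $\eta^N\in\mc A_i^N$.

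Next I would record the short-time estimate. By standard semigroup estimates for \eqref{hdleq} (cf.\ Appendix \ref{seca}), there is a nonincreasing function $\omega:(0,\infty)\to(0,\infty)$ with $\lim_{t\downarrow0}\omega(t)=0$ such that $d(\rho_t,\rho_0)\le\omega(t)$ for \emph{every} weak solution $\rho_\cdot$ of \eqref{hdleq} with initial datum $\rho_0:\T\to[0,1]$ and every $t\ge0$: in the metric \eqref{2-9} the linear part $e^{t\Delta/2}$ is equicontinuous at $t=0$ over densities bounded by $1$, and the Duhamel term is controlled by $t\sup_{[0,1]}|F|$ together with \eqref{2-17}. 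Since $\beta_i-\alpha_i>0$, fix $T_2>0$ with $\omega(T_2)<\beta_i-\alpha_i$. Then any weak solution $\rho_\cdot$ of \eqref{hdleq} with $d(\rho_0,\bar\rho_i)\le\alpha_i$ satisfies $d(\rho_t,\bar\rho_i)\le\omega(t)+\alpha_i<\beta_i$ for all $t\in[0,T_2]$, so the trajectory $t\mapsto\rho_t(\theta)d\theta$ does not belong to $\mf C_{T_2}(\mc D)$.

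With $T_2$ fixed I would argue by contradiction: suppose $\limsup_{N}\max_{\eta^N\in\mc A_i^N}\bb P_{\eta^N}(\tau_1\le T_2)=\delta>0$. Along a subsequence (still indexed by $N$) pick maximizers $\eta^N\in\mc A_i^N$ with $\bb P_{\eta^N}(\tau_1\le T_2)\ge\delta/2$; passing to a further subsequence, $\pi_N(\eta^N)\to\varrho_\infty$ in $\mc M_+$, and as in the proof of Lemma \ref{lem3-1} (configurations carry at most one particle per site) $\varrho_\infty(d\theta)=\rho_\infty(\theta)d\theta$ with $\rho_\infty:\T\to[0,1]$ and $d(\rho_\infty,\bar\rho_i)\le\alpha_i$. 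Thus $\{\eta^N\}$ is associated with $\rho_\infty$, and the inclusion above together with Theorem \ref{thm2-2} gives
\begin{align*}
0=\limsup_{N\to\infty}\dfrac1N\log\dfrac{\delta}{2}\le\limsup_{N\to\infty}\dfrac1N\log Q_{\eta^N}\!\left(\mf C_{T_2}(\mc D)\right)\le-\inf_{\pi\in\mf C_{T_2}(\mc D)}I_{T_2}(\pi|\rho_\infty),
\end{align*}
so $\inf_{\pi\in\mf C_{T_2}(\mc D)}I_{T_2}(\pi|\rho_\infty)=0$. Choose $\pi^{(m)}\in\mf C_{T_2}(\mc D)$ with $\pi^{(m)}(0,d\theta)=\rho_\infty(\theta)d\theta$ and $I_{T_2}(\pi^{(m)})\to0$; since $I_{T_2}$ has compact level sets, a subsequence converges in $D([0,T_2],\mc M_+)$ to some $\pi^\star$, and lower semicontinuity and continuity of evaluation at $0$ give $I_{T_2}(\pi^\star)=0$, $\pi^\star(0,d\theta)=\rho_\infty(\theta)d\theta$, and $\pi^\star\in\mf C_{T_2}(\mc D)$ (closedness). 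But a trajectory of zero rate function is the unique weak solution of \eqref{hdleq} with its own initial datum (a standard feature of the dynamical rate function; cf.\ Appendix \ref{secb}), so $\pi^\star(t,d\theta)=\rho^\star_t(\theta)d\theta$ for the weak solution $\rho^\star_\cdot$ starting from $\rho_\infty$, and the second paragraph forces $\pi^\star\notin\mf C_{T_2}(\mc D)$ — a contradiction. Hence $\lim_N\max_{\eta^N\in\mc A_i^N}\bb P_{\eta^N}(\tau_1\le T_2)=0$; in particular there is $N_3$ with $\bb P_{\eta^N}(\tau_1>T_2)\ge1/2$ for all $N\ge N_3$ and all $\eta^N\in\mc A_i^N$.

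The main obstacle is that the short-time estimate must be uniform in the initial density: the maximum over $\eta^N\in\mc A_i^N$ produces, after passing to a limit, an a priori uncontrolled density $\rho_\infty$ anywhere within $\alpha_i$ of $\bar\varrho_i$, so $T_2$ cannot be allowed to depend on it. This uniformity is available \emph{only} because the excursion events are phrased through the weak metric $d$ of \eqref{2-9}: one cannot hope for an $L^2$-modulus of continuity at $t=0$ that is uniform over $L^\infty$-bounded data, whereas in $d$ the heat semigroup is equicontinuous at $t=0$ on the densities bounded by $1$. Feeding this into the reduction above, and using the compactness of the level sets of $I_{T_2}$ to pass from "no zero-cost excursion from $\rho_\infty$'' to "exponentially negligible excursion probability'', closes the argument.
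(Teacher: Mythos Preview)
Your proof is correct and takes a genuinely different, more direct route than the paper's. The paper first splits $\{\tau_1\le T_2\}$ according to whether $\eta^N_{\tau_1}\in\mc A_i^N$ or $\eta^N_{\tau_1}\in[\mc C_i^N]^c$; the second event is disposed of via Lemma~\ref{lem3-1}, and for the first the paper applies the strong Markov property at $\sigma_0$ to restart from $\mc B_i^N$ and then invokes Theorem~\ref{thm2-2} with the closed set $\mf C_{T_2}(\mc B[\alpha_i;\bar\varrho_i])$. To make the rate-function infimum positive, the paper chooses $T_2$ as half of $\inf\{\tau^{(i)}(\rho):\rho(\theta)d\theta\in\mc B_i\}$, the minimal entrance time of the hydrodynamic flow from $\mc B_i$ into $\mc B[\alpha_i;\bar\varrho_i]$; this requires condition~\hyperlink{be2}{($\beta$-2)} and the lower-semicontinuity result Corollary~\ref{cora-1}. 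You instead bound the single \emph{outward} excursion from $\mc A_i$ to $\mc D=\{d(\cdot,\bar\varrho_i)\ge\beta_i\}$ directly, and rule out a zero-cost excursion by the uniform short-time bound $d(\rho_t,\rho_0)\le C_1 t$ for weak solutions of \eqref{hdleq} (this is precisely the first estimate in the proof of Proposition~\ref{propa-1}). Your argument thereby avoids the decomposition, the strong Markov restart, the appeal to Lemma~\ref{lem3-1}, and the entrance-time machinery of Corollary~\ref{cora-1}; what the paper's route buys is consistency with the stopping-time scaffolding already set up for Lemma~\ref{lem2-3}. One cosmetic slip: your modulus $\omega$ should be \emph{nondecreasing}, not nonincreasing.
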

\begin{proof}
Let $T_2>0$. The probability in the lemma can be decomposed into
\begin{align}\label{3-2}
\bb P_{\eta^N}\left( \tau_1 \le  T_2, \eta_{\tau_1}^N\in \mc A_i^N \right)
+ \bb P_{\eta^N}\left( \tau_1 \le  T_2, \eta_{\tau_1}^N\in [\mc C_i^N]^c \right) .
\end{align}
By the definition of $\nu_N$, $\bb P_{\eta^N}\left( \nu_N=1 \right)$ bounds
the second probability in \eqref{3-2}. From Lemma \ref{lem3-1},
this probability vanishes as $N\to\infty$.
However, by the strong Markov property 
the first probability in \eqref{3-2} equals
\begin{align}\label{3-3}
\bb E_{\eta^N}\left[ 
{{\bb P}}_{\eta_{\sigma_0}^N}\left( \tau_1 \le T_2, \eta_{\tau_1}^N\in \mc A_i^N \right) \right] .
\end{align}

Recall the definition of $\mf C_{T_2}(\mc B[\alpha_i;\bar\varrho_i])$,
which is introduced in the condition \hyperlink{be3}{($\beta$-3)}.
Note that \eqref{3-3} is bounded by
\begin{align*}
\max_{\xi^N\in\mc B_i^N}
\bb P_{\xi^N}\left(\eta_\cdot^N: \pi_N(\eta_\cdot^N)\in \mf C_{T_2}(\mc B[\alpha_i;\bar\varrho_i]) \right)
= \max_{\xi^N\in\mc B_i^N}
Q_{\xi^N}\left(\mf C_{T_2}(\mc B[\alpha_i;\bar\varrho_i]) \right).
\end{align*}
Let $\{\xi_0^N\}_N$ be a sequence satisfying
\begin{align*}
Q_{\xi_0^N}\left(\mf C_{T_2}(\mc B[\alpha_i;\bar\varrho_i]) \right)
 =  \max_{\xi^N\in\mc B_i^N}
Q_{\xi^N}\left(\mf C_{T_2}(\mc B[\alpha_i;\bar\varrho_i]) \right) .
\end{align*}
Performing an argument, as we did in the proof of Lemma \ref{lem3-1} (see the paragraph after \eqref{3-12}),
there exists a subsequence $\{\xi_0^{N_k}\}_k$ of $\{\xi_0^N\}_N$ such that
$\{\xi_0^{N_k}\}_k$ is associated with some $\rho:\T\to[0,1]$ with $\rho(\theta) d\theta\in\mc B_i$ and
\begin{align*}
\limsup_{N\to\infty} \dfrac1N \log Q_{\xi_0^N}\left(\mf C_{T_2}(\mc B[\alpha_i;\bar\varrho_i]) \right)
= \lim_{k\to\infty} \dfrac{1}{N_k} \log Q_{\xi_0^{N_k}}\left(\mf C_{T_2}(\mc B[\alpha_i;\bar\varrho_i]) \right).
\end{align*}
Then, by Theorem \ref{thm2-2}, the right-hand side of the last display is bounded above by
\begin{align*}
 - \inf_{\pi\in \mf C_{T_2}(\mc B[\alpha_i;\bar\varrho_i])} I_{T_2}(\pi|\rho) .
\end{align*}

It remains to show that there exists $T_2>0$ such that
\begin{align}\label{3-15}
\inf_{\pi\in \mf C_{T_2}(\mc B[\alpha_i;\bar\varrho_i])} I_{T_2}(\pi|\rho)>0.
\end{align}
To see this, for each $\tilde\rho(\theta)d\theta\in\mc B_i$,
let $\tilde\rho_t$ be the solution to the Cauchy problem \eqref{hdleq},
with the initial condition $\tilde\rho$.
Let $\tau^{(i)}(\tilde\rho)$ be the first entrance time of $\tilde\rho_t$ into $\mc B[\alpha_i;\bar\varrho_i]$, that is,
\begin{align*}
\tau^{(i)}(\tilde\rho)= \inf\left\{ t\ge0: \tilde\rho_t \in \mc B[\alpha_i;\bar\varrho_i] \right\}.
\end{align*}
Note that $\tau^{(i)}(\tilde\rho)$ is finite because by $(\beta$-2), $\tilde\rho_t$ converges to $\bar\rho_i$
as $t\to\infty$ for any $\tilde\rho(\theta)d\theta\in\mc B_i$. Moreover, from Corollary \ref{cora-1},
the application $\tilde\rho(\theta)d\theta\in\mc B_i\cap\mc M_{+,1}\mapsto\tau^{(i)}(\tilde\rho)$
is lower semicontinuous with respect to the weak topology.
Let $\tilde T_2$ be the constant defined by
\begin{align*}
\tilde T_2= \inf \left\{ \tau^{(i)}(\tilde\rho): \tilde\rho(\theta)d\theta\in\mc B_i \right\} = 
\min \left\{ \tau^{(i)}(\tilde\rho): \tilde\rho(\theta)d\theta\in\mc B_i \right\}.
\end{align*}
The second equality follows from the compactness of $\mc B_i$ and the mentioned lower semi-continuity of $\tau^{(i)}$.
Let $T_2=\tilde T_2/2$.

Before turning to show \eqref{3-15}, we see that  some $\tilde\pi\in \mf C_{T_2}(\mc B[\alpha_i;\bar\varrho_i])$
attains the infimum in \eqref{3-15}.
Indeed, let us take any $\pi' \in \mf C_{T_2}(\mc B[\alpha_i;\bar\varrho_i])$ satisfying $I_{T_2}(\pi'|\rho)<\infty$.
Because $I_{T_2}(\cdot|\rho)$ has a compact level set and $\mf C_{T_2}(\mc B[\alpha_i;\bar\varrho_i])$ is closed,
the subset of $D([0,T_2], \mc M_+)$ defined by
\begin{align*}
\mf C' := \left\{ \pi\in\mf C_{T_2}(\mc B[\alpha_i;\bar\varrho_i]): I_{T_2}(\pi|\rho)\le I_{T_2}(\pi'|\rho) \right\},
\end{align*}
is a compact subset of $D([0,T_2], \mc M_+)$. Because $I_{T_2}(\cdot|\rho)$ is lower semicontinuous,
there exists some $\tilde\pi\in \mf C_{T_2}(\mc B[\alpha_i;\bar\varrho_i])$ such that
\begin{align*}
 I_{T_2}(\tilde \pi|\rho)
= \inf_{\pi\in \mf C'} I_{T_2}(\pi|\rho) 
= \inf_{\pi\in \mf C_{T_2}(\mc B[\alpha_i;\bar\varrho_i])} I_{T_2}(\pi|\rho).
\end{align*}
Therefore, $\tilde\pi\in \mf C_{T_2}(\mc B[\alpha_i;\bar\varrho_i])$ can attain the infimum in \eqref{3-15}.

Now, assume that $I_{T_2}(\tilde \pi|\rho)=0$. Here, from Lemma \ref{lemb-1}, the density of $\tilde\pi(t,d\theta)$,
denoted by $\tilde\rho(t,\theta), 0\le t \le T_2$, is the weak solution to the Cauchy problem \eqref{hdleq}
with the initial condition $\rho$. However, this contradicts $\tilde\pi \in \mf C_{T_2}(\mc B[\alpha_i;\bar\varrho_i])$
and $\tilde\rho_t(\theta)d\theta\notin \mc B[\alpha_i;\bar\varrho_i]$ for any $0\le t \le T_2$.
Therefore, \eqref{3-15} has been shown, completing the proof of Lemma \ref{lem3-2}.
\end{proof}

Invoking Lemmata \ref{lem3-1} and \ref{lem3-2}, proving Lemma \ref{lem2-3}
is similar to the one of \cite[Chapter 4, Theorem 4.2]{MR1652127}.

\begin{proof}[Proof of Lemma \ref{lem2-3}]
As mentioned in the first paragraph of this section,
to prove Lemma \ref{lem2-3} it is enough to show \eqref{3-14}.
Fix any sequence $\eta^N\in \mc A_i^N$, such that
\begin{align*}
\bb P_{\eta^N}\left( H_N \le e^{Nh_i} \right) = \max_{\zeta^N\in\mc A_i^N} \bb P_{\zeta^N}\left( H_N \le e^{Nh_i} \right).
\end{align*}
First, note that
\begin{align*}
\bb P_{\eta^N}\left( H_N \le e^{Nh_i} \right)
 \le  \bb P_{\eta^N}\left( \tau_1= H_N \right) + \bb P_{\eta^N}\left( \tau_1 < H_N, H_N \le e^{Nh_i} \right)  .
\end{align*}
The first probability of the right-hand side equals
\begin{align*}
\bb P_{\eta^N}\left( \nu_N=1 \right) ,
\end{align*}
and vanishes as $N\to\infty$ by Lemma \ref{lem3-1}.
On the other hand, by the strong Markov property,
the last probability of the penultimate display is bounded by
\begin{align*}
\sum_{k=1}^\infty\bb E_{\eta^N}\left[ {\bf 1}\left\{ \tau_1 < H_N\right\} 
{{\bb P}}_{\eta_{\tau_1}^N}\left(\nu_N=k, H_N \le e^{Nh_i}\right) \right] .
\end{align*}

Note that, on the event $\{ \tau_1 < H_N\}$,
we have $\eta_{\tau_1}^N\in \mc A_i^N$.
Let $T_2$ be the constant chosen according to Lemma \ref{lem3-2}.
We also let $m_N=\lceil(3/T_2)e^{N^{1/2}h_i}\rceil$, where $\lceil \cdot \rceil$ denotes the ceil function.
For any configuration $\xi^N\in \mc A_i^N$, we have
\begin{align}\label{3-1}
&\sum_{k=1}^\infty {{\bb P}}_{\xi^N}\left(\nu_N=k, H_N \le e^{Nh_i}\right)\notag\\
& \quad  \le  {{\bb P}}_{\xi^N}\left(\nu_N\le m_N\right) + \sum_{k=m_N}^\infty {{\bb P}}_{\xi^N}\left(\nu_N=k, \tau_k \le e^{Nh_i}\right)\notag\\
& \quad  \le  {{\bb P}}_{\xi^N}\left(\nu_N\le m_N\right) + {{\bb P}}_{\xi^N}\left(\nu_N\ge m_N, \tau_{m_N}  \le e^{Nh_i}\right) .
\end{align}

From Lemma \ref{lem3-1},
\begin{align*}
\bb P_{\xi^N}\left( \nu_N=1 \right)  \le  e^{-Nh_i} ,
\end{align*}
for any $N$ sufficiently large and any $\xi^N\in \mc A_i^N$. Therefore, by the strong Markov property, we have
\begin{align*}
\bb P_{\xi^N}\left( \nu_N> m_N \right)  \ge  \left( 1 - e^{-Nh_i} \right)^{m_N}  .
\end{align*}
As the right-hand side of the last expression converges to $1$ as $N\to\infty$,  we have
\begin{align*}
\lim_{N\to\infty} \max_{\xi^N\in\mc A_i^N}{{\bb P}}_{\xi^N}\left(\nu_N\le m_N\right) =  0 .
\end{align*}

Let us address the second probability in \eqref{3-1}. 
From the trivial decomposition
\begin{align*}
\tau_{m_N}=\left( \tau_{m_N}-\tau_{m_N-1} \right) + \left( \tau_{m_N-1}-\tau_{m_N-2} \right) + \cdots + \left( \tau_{1}-\tau_{0} \right) ,
\end{align*}
$\tau_{m_N}$ can be bounded below by
\begin{align*}
T_2 \sum_{k=1}^{m_N}{\bf 1}\left\{\tau_k - \tau_{k-1}> T_2\right\} =:  R_N .
\end{align*}
Therefore,
\begin{align*}
{{\bb P}}_{\xi^N}\left(\nu_N\ge m_N, \tau_{m_N} \le e^{Nh_i}\right)
 &\le 
{{\bb P}}_{\xi^N}\left(\nu_N\ge m_N, R_N  \le e^{Nh_i}\right) \\
 &\le  {{\bb P}}_{\xi^N}\left(\nu_N\ge m_N, \dfrac{R_N}{T_2m_N} \le 1/3\right) .
\end{align*}
Note that $R_N/T_2$ is the sum of independent Bernoulli random variables.
Moreover, by Lemma \ref{lem3-2}, there exists $N_4$ such that,
for any $N\ge N_4$ and any configuration $\xi^N\in\mc A_i^N$,
under $\bb P_{\xi^N}$ and on the event $\{\nu_N \ge m_N\}$
the mean of each increment of $R_N/T_2$ is larger than $1/2$.
Therefore, the last probability vanishes as $N\to\infty$ uniformly in $\xi^N\in\mc A_i^N$. Therefore,
\begin{align*}
\lim_{N\to\infty} \max_{\xi^N\in\mc A_i^N}
{{\bb P}}_{\xi^N}\left(\nu_N\ge m_N, \tau_{m_N} \le e^{Nh_i}\right) =  0 ,
\end{align*}
completing the proof of \eqref{3-14} and, hence, Lemma \ref{lem2-3}.
\end{proof}

\section{Hitting times of rare events}\label{sec4}

In this section, we study the hitting times of rare events in the case where
\begin{enumerate}
\item[\hypertarget{UM}{(UM)}] the potential $V$ has a unique minimum.
\end{enumerate}
Denote by $\rho_*$ the position at which the minimum of $V$ is attained.
As before, let $\bar\rho_*(\theta) \equiv \rho_*, \theta\in\T$ and $\bar\varrho_*(d\theta)=\bar\rho_*(\theta)d\theta$, respectively.

Fix an open subset $\mc O$ of $\mc M_+$ such that
$\mc O\cap \mc M_{+,1}\neq\emptyset$ and
\begin{align*}
d(\bar\varrho_*, \mc O):=\inf_{\varrho\in \mc O}d(\bar\varrho_*, \varrho) >0.
\end{align*}
Note that
\begin{align*}
\mu_N(\mc O^N)
= \mc P_N \left(\mc O\right)
\le \mc P_N \left(\varrho\in\mc M_+: d\left(\bar\varrho_*, \varrho\right)\ge d(\bar\varrho_*, \mc O)\right).
\end{align*}
Under condition \hyperlink{UM}{(UM)}, 
the  semi-linear elliptic equation \eqref{seeq} admits a unique classical solution given by $\bar\rho_*$.
Therefore, by Lemma \ref{lem2-2}
the last expression vanishes as $N\to\infty$.
Thus, we have
\begin{align}\label{4-5}
\lim_{N\to\infty} \mu_N(\mc O^N) =0.
\end{align}

We apply our large deviation estimates and some mixing time estimates 
to show the convergence of hitting times of rare events.
Therefore, let $H_N^{\mc O}$ be the hitting time of $\mc O^N$
\begin{align*}
H_N^{\mc O}=\inf\{ t\ge0: \eta_t^N\in \mc O^N\}. 
\end{align*}
To establish the convergence of $H_N^{\mc O}$, we need the following result,
which is Lemma \ref{lem2-3} in this setting.

\begin{lemma}\label{lem4-1}
Let $\gamma_*=d(\bar\varrho_*, \mc O)$. 
There exists $0<\alpha_*<\gamma_*$, such that
\begin{align*}
\lim_{N\to\infty} \max_{\eta^N\in\mc A_i^N} \bb P_{\eta^N}\left( H_N^{\mc O} \le e^{Nh_*} \right)  =  0 ,
\end{align*}
where
\begin{align*}
\mc A_*&:= \mc B(\alpha_*, \bar\varrho_*)\\
2h_*&:=\inf \{ V_*(\varrho): \varrho\notin \mc C_*=\mc C(\gamma_*,\bar\varrho_*) \} > 0,\\
V_*(\varrho) &:=  \inf \left\{ I_T(\pi|\bar\rho_*): T>0, \pi\in D([0,T], \mc M_+), \pi_T=\varrho \right\} , \quad \varrho\in\mc M_+ .
\end{align*}
\end{lemma}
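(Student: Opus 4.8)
The plan is to transport the construction and the estimates of Section~\ref{sec3} to the present one-well situation, with $\bar\varrho_*$ playing the role of a metastable state $\bar\varrho_i$ and with the \emph{prescribed} radius $\gamma_*=d(\bar\varrho_*,\mc O)$ in place of the freely chosen $\gamma_i$. First I would check that this substitution is admissible, i.e.\ that the nested neighborhoods $\mc A_*,\mc B_*,\mc C_*$ of items \hyperlink{ga}{($\gamma$)}--($\alpha$) of Section~\ref{sec2} can be set up with $\gamma_*$ fixed in advance. Under hypothesis \hyperlink{UM}{(UM)} the semi-linear elliptic equation~\eqref{seeq} has the single solution $\bar\rho_*$, so $\mc M_\sol=\{\bar\varrho_*\}$ and $\mc M_\sol\setminus\{\bar\varrho_*\}=\emptyset$; hence the separation requirement in \hyperlink{ga}{($\gamma$)} is vacuous and any $\gamma_*>0$ is allowed, in particular $\gamma_*=d(\bar\varrho_*,\mc O)>0$. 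Lemma~\ref{lemb-4} then gives $2h_*=\inf\{V_*(\varrho):\varrho\notin\mc C_*\}>0$, where $\mc C_*=\mc B(\gamma_*;\bar\varrho_*)$, and Lemmata~\ref{lema-3} and~\ref{lem3-3} provide $\beta_*>0$ with $2\beta_*<\gamma_*$ satisfying \hyperlink{be2}{($\beta$-2)} and \hyperlink{be3}{($\beta$-3)}; finally one picks $0<\alpha_*<\beta_*$ and sets $\mc A_*=\mc B(\alpha_*;\bar\varrho_*)$, $\mc B_*=\mc B[2\beta_*;\bar\varrho_*]\setminus\mc B(\beta_*;\bar\varrho_*)$, exactly as in Section~\ref{sec2}. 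In particular $\alpha_*<\gamma_*$, as claimed in the statement.

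Second I would reduce $H_N^{\mc O}$ to an escape time from $\mc C_*^N$. Since $d(\bar\varrho_*,\mc O)=\gamma_*$, every $\varrho\in\mc O$ obeys $d(\bar\varrho_*,\varrho)\ge\gamma_*$, so $\mc O\subset\mc C_*^c$ and $\mc O^N\subset[\mc C_*^N]^c$. Consequently, for a process started in $\mc A_*^N\subset\mc C_*^N$, the time $H_N^{\mc O}$ is at least the escape time $H_N=\inf\{t\ge0:\eta_t^N\in[\mc C_*^N]^c\}$ of~\eqref{3-21}, whence
\[
\bb P_{\eta^N}\!\left(H_N^{\mc O}\le e^{Nh_*}\right)\ \le\ \bb P_{\eta^N}\!\left(H_N\le e^{Nh_*}\right).
\]
It therefore suffices to prove that $\lim_{N\to\infty}\max_{\eta^N\in\mc A_*^N}\bb P_{\eta^N}(H_N\le e^{Nh_*})=0$, which is precisely statement~\eqref{3-14} for this single well.

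Third I would run the proof of~\eqref{3-14} verbatim. Although Section~\ref{sec3} carries the standing hypothesis $\ell\ge2$, the derivation of~\eqref{3-14} for a fixed index uses only the local picture around the relevant well: Lemma~\ref{lem3-1} bounds the one-step probability of passing from $\mc A_*^N$ to $[\mc C_*^N]^c$ by $e^{-Nh_*}$, using Lemma~\ref{lemb-2}, Theorem~\ref{thm2-2}, and the conditions \hyperlink{ga}{($\gamma$)}--\hyperlink{be3}{($\beta$-3)} verified above; Lemma~\ref{lem3-2} shows the chain cannot leave $\mc C_*^N$ within a bounded time window, using the hydrodynamic convergence \hyperlink{be2}{($\beta$-2)} and Theorem~\ref{thm2-2}; and the renewal argument at the end of Section~\ref{sec3}---decomposing $\{H_N\le e^{Nh_*}\}$ according to the number $\nu_N$ of excursions between $\mc A_*^N$ and $\mc B_*^N$, and comparing $\tau_{m_N}$ with a sum of independent Bernoulli variables---then yields~\eqref{3-14} uniformly over $\mc A_*^N$. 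Combining the three steps proves Lemma~\ref{lem4-1}.

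There is no serious obstacle; the one point that genuinely needs attention is the first step, where $\gamma_*$ is dictated by $\mc O$ rather than chosen small, and it is exactly hypothesis \hyperlink{UM}{(UM)} that makes this harmless by collapsing $\mc M_\sol$ to $\{\bar\varrho_*\}$. One should also record that $\gamma_*>0$ is what makes $h_*>0$ through Lemma~\ref{lemb-4}; the hypothesis $\mc O\cap\mc M_{+,1}\neq\emptyset$ plays no role in this lemma and is instead used later for the finiteness of $H_N^{\mc O}$ in Theorem~\ref{thm4-1}.
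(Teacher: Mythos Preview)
Your proposal is correct and follows exactly the approach the paper intends: the paper does not give an independent proof of Lemma~\ref{lem4-1} but simply introduces it as ``Lemma~\ref{lem2-3} in this setting,'' and your three steps spell out precisely how that adaptation goes. Your observation that hypothesis \hyperlink{UM}{(UM)} collapses $\mc M_\sol$ to $\{\bar\varrho_*\}$, thereby rendering condition~\hyperlink{ga}{($\gamma$)} vacuous and allowing $\gamma_*$ to be prescribed by $\mc O$ rather than chosen small, is the one nontrivial point in the transfer and you have handled it correctly.
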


For two real-valued sequences $a_N$, and $b_N$, we denote $a_N \ll b_N$ if $a_N/b_N\to0$ as $N\to\infty$.
Recall the definition of the mixing time $t^N_\mix(\e)$.
Assume that for some $a>0$
\begin{enumerate}
\item[\hypertarget{MT}{(MT)}] $t_\mix^N(1/4) \ll N^a$.
\end{enumerate}
As mentioned before, in \cite{tanaka2020glauberexclusion}, it has been shown that,
for any attractive reaction--diffusion model  with a strictly convex potential
we have $t_\mix^N(1/4)=O(\log N)$, where $O$ denotes the Bachmann--Landau notation. Therefore in this case \hyperlink{MT}{(MT)} is satisfied.

We also need the static large deviation principle for empirical measures.
More precisely, we assume the following static large deviation principle (SLDP).
\begin{enumerate}
\item[\hypertarget{SLDP}{(SLDP)}]  The sequence of probability measures $\{\mc P_N\}_N$ on $\mc M_+$ satisfies 
a large deviation principle with speed $N$ and rate function $V_*$.
Namely, for each closed set $\mc K\subset\mc M_+$,
\begin{align*}
\limsup_{N\to\infty} \dfrac1N \log \mc P_N\left(\mc K \right) \le -\inf_{\varrho\in\mc K}V_*(\varrho),
\end{align*}
and for each open set $\mc U\subset\mc M_+$,
\begin{align*}
\liminf_{N\to\infty} \dfrac1N \log \mc P_N\left(\mc U \right) \ge -\inf_{\varrho\in\mc U}V_*(\varrho).
\end{align*}
\end{enumerate}
Under condition \hyperlink{C}{(C)} in Remark \ref{rem2-1} and \hyperlink{UM}{(UM)}, \hyperlink{SLDP}{(SLDP)} has been established in \cite{MR3947320}.
Note that we use the upper bound of \hyperlink{SLDP}{(SLDP)} only in the following argument.

Before stating the next result, let us return to Example \ref{exa2-1}
to see an example that satisfies the conditions {\rm \hyperlink{UM}{(UM)}, \hyperlink{MT}{(MT)}},
and {\rm \hyperlink{SLDP}{(SLDP)}}.

\begin{example}\label{exa4-1}
Recall the jump rate given in Example \ref{exa2-1} for $0\le\gamma<1$, and define
\begin{align*}
c(\eta)=1+\gamma(1-2\eta(0))(\eta(1)+\eta(-1)-1)+\gamma^2(2\eta(-1)-1)(2\eta(1)-1).
\end{align*}
{\rm \hyperlink{UM}{(UM)}} holds if, and only if, $0\le\gamma\le 1/2$,
{\rm \hyperlink{C}{(C)}} holds if, and only if, $0\le\gamma\le 1/2$ and $V$ is strictly convex if $0 \le\gamma< 1/2$.
Thus, the jump rate $c$ satisfies {\rm \hyperlink{UM}{(UM)}}, {\rm \hyperlink{MT}{(MT)}}, and {\rm \hyperlink{SLDP}{(SLDP)}} for any $0\le\gamma<1/2$.
\end{example}

Our second main result is as follows.

\begin{theorem}\label{thm4-1}
Assume the conditions {\rm \hyperlink{UM}{(UM)}, \hyperlink{MT}{(MT)}} and {\rm \hyperlink{SLDP}{(SLDP)}}. 
Then, for any sequence $\eta^N\in \mc A_*^N$,
$H_N^{\mc O}/\bb E_{\mu_N}[H_N^{\mc O}]$ under $\bb P_{\eta^N}$ converges in distribution
to a mean one exponential random variable.
\end{theorem}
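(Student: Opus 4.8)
The plan is to verify the hypotheses of the general criterion of Beltrán and Landim \cite{MR3131505} for convergence of hitting times to the exponential distribution. Recall that such a criterion typically requires two ingredients: first, a \emph{uniform relaxation} statement saying that, starting from anywhere in a suitable ``attractor'' of the stable point $\bar\varrho_*$, the law of the process at a time which is $o(\bb E_{\mu_N}[H_N^{\mc O}])$ is close in total variation to the quasi-stationary (or, here, the stationary) measure conditioned on not having hit $\mc O^N$; second, that the hitting time starting from the stationary measure, suitably normalized, is asymptotically memoryless. In our setting the key simplifying feature is \eqref{4-5}: since $\mu_N(\mc O^N)\to 0$, the stationary measure is, to leading order, supported away from $\mc O^N$, so conditioning on $\{H_N^{\mc O}>t\}$ is a negligible perturbation and we may work directly with $\mu_N$.

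The steps, in order, would be: (1) Show $\bb E_{\mu_N}[H_N^{\mc O}]\to\infty$, in fact that it grows exponentially, as a consequence of Lemma \ref{lem4-1} together with the strong Markov property: starting from $\mu_N$, with probability close to $\mu_N(\mc A_*^N)\to 1$ the chain is (by \eqref{4-5} and Lemma \ref{lem2-2}) within $\mc A_*^N$ at time $0$, whence by Lemma \ref{lem4-1} it does not reach $\mc O^N$ before $e^{Nh_*}$ with probability $\to 1$; renewing, $\bb E_{\mu_N}[H_N^{\mc O}]\gtrsim e^{Nh_*/2}$ for large $N$, so in particular $t_\mix^N(1/4)\ll N^a \ll \bb E_{\mu_N}[H_N^{\mc O}]$. (2) Establish the equilibrium memoryless property: for every $t\ge 0$,
\begin{align*}
\bb P_{\mu_N}\!\left(H_N^{\mc O} > t\,\bb E_{\mu_N}[H_N^{\mc O}]\right) \longrightarrow e^{-t}.
\end{align*}
For this one writes, for $s<u$, $\bb P_{\mu_N}(H_N^{\mc O}>s+u)$ and uses the Markov property at time $s$ on the event $\{H_N^{\mc O}>s\}$; the conditional law of $\eta_s^N$ given $\{H_N^{\mc O}>s\}$ must be compared with $\mu_N$, and the error is controlled by combining the mixing bound \hyperlink{MT}{(MT)} with the fact that $\mu_N(\mc O^N)\to 0$ (so the two conditionings differ negligibly), yielding an approximate multiplicative/additive functional equation $g_N(s+u)\approx g_N(s)g_N(u)$ on the rescaled time axis; its only continuous solutions are exponentials, and matching $\bb E[H_N^{\mc O}/\bb E_{\mu_N}[H_N^{\mc O}]]=1$ pins the rate to one. (3) Replace the starting measure $\mu_N$ by an arbitrary deterministic sequence $\eta^N\in\mc A_*^N$: by Lemma \ref{lem4-1}, $H_N^{\mc O}>e^{Nh_*}$ with probability $\to 1$ starting from $\eta^N$, and by \hyperlink{MT}{(MT)} the law of $\eta_{s_N}^N$ under $\bb P_{\eta^N}$ is within $o(1)$ in total variation of $\mu_N$ for a suitable $t_\mix^N(1/4)\ll s_N\ll e^{Nh_*}$; since $s_N/\bb E_{\mu_N}[H_N^{\mc O}]\to 0$, conditioning on $\{H_N^{\mc O}>s_N\}$ does not change the limit, and one transfers the conclusion of step (2) from $\mu_N$ to $\eta^N$. (4) Invoke (SLDP) only where needed — namely to identify the exponential \emph{rate} of $\bb E_{\mu_N}[H_N^{\mc O}]$ with $\inf\{V_*(\varrho):\varrho\in\overline{\mc O}\}$, or more modestly to control $\mu_N(\mc O^N)$ from above by $e^{-N(\inf_{\mc O}V_*-\delta)}$ so that the normalization is genuinely exponentially large; this is used in the comparison of the two conditionings in step (2).

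The main obstacle I expect is step (2), specifically the passage from the crude mixing statement \hyperlink{MT}{(MT)} to a statement about the process \emph{conditioned to avoid $\mc O^N$}. The subtlety is that total-variation mixing gives closeness of $\bb P_{\mu_N}(\eta_s^N\in\cdot)$ to $\mu_N$, but the functional equation involves $\bb P_{\mu_N}(\eta_s^N\in\cdot \mid H_N^{\mc O}>s)$; the two differ by a factor $\bb P_{\mu_N}(H_N^{\mc O}>s)^{-1}$ on the relevant set, and one needs that the ``mass leaked into $\mc O^N$ by time $s$'' is uniformly small, which is where the interplay between the exponentially large hitting time (Lemma \ref{lem4-1}) and the sub-polynomial mixing time (\hyperlink{MT}{(MT)}) is essential: choose an intermediate time scale $s_N$ with $t_\mix^N(1/4)\ll s_N$ (so mixing has occurred) and $s_N\ll \bb E_{\mu_N}[H_N^{\mc O}]$ (so the leakage $\bb P_{\mu_N}(H_N^{\mc O}\le s_N)$ is $o(1)$ by Markov's inequality and step (1)). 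This is exactly the regime where the general criterion of \cite{MR3131505} applies, so the real work is checking its hypotheses rather than reproving the exponential limit from scratch; once steps (1)–(4) are in place, Theorem \ref{thm4-1} follows.
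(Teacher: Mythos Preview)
Your overall plan---reduce to the criterion of \cite{MR3131505} and verify its hypotheses---is exactly what the paper does. But you have missed one hypothesis and, as a consequence, misidentified the role of \hyperlink{SLDP}{(SLDP)}.

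The criterion (stated in the paper as Theorem~\ref{thm4-2}) requires, in addition to \eqref{4-1}, \eqref{4-2}, and \eqref{4-3}, the jump-rate condition
\begin{align*}
t_\mix^N(1/4)\ \ll\ r_N\big([\mc O^N]^c,\mc O^N\big)^{-1},
\end{align*}
where $r_N$ is the $\mu_N$-average rate at which the chain jumps from $[\mc O^N]^c$ into $\mc O^N$. Your steps (1)--(4) never address this. In the paper, this is where \hyperlink{SLDP}{(SLDP)} is actually used: one bounds $r_N$ by $N^4\,\mu_N(\partial\mc O^N)/\mu_N([\mc O^N]^c)$ and then invokes the upper bound of \hyperlink{SLDP}{(SLDP)} on the closed set $\mc K=\mc B(\alpha_*;\bar\varrho_*)^c\supset\pi_N(\partial\mc O^N)$ to get $\mu_N(\partial\mc O^N)\le e^{-bN}$ for some $b>0$. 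By contrast, neither of the uses you propose for \hyperlink{SLDP}{(SLDP)} is needed: $\mu_N(\mc O^N)\to 0$ already follows from the hydrostatic Lemma~\ref{lem2-2} (this is \eqref{4-5}), and the exponential lower bound on $\bb E_{\mu_N}[H_N^{\mc O}]$ follows from Lemma~\ref{lem4-1} and Lemma~\ref{lem2-2} alone, exactly as you outline in step~(1).

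This gap is not cosmetic: your direct functional-equation argument in step~(2) also breaks down without the jump-rate control. After conditioning on $\{H_N^{\mc O}>s\,\bb E_{\mu_N}[H_N^{\mc O}]\}$, the process may sit anywhere in $[\mc O^N]^c$, in particular on $\partial\mc O^N$, and Lemma~\ref{lem4-1} says nothing about hitting $\mc O^N$ from there during the next $S_N$ units of time. The statement ``leakage $\bb P_{\mu_N}(H_N^{\mc O}\le s_N)$ is $o(1)$ by Markov's inequality'' is correct only for the stationary start, not for the conditional law you need to couple to $\mu_N$; bridging that gap is precisely the content of the jump-rate hypothesis. Once you add the boundary estimate via \hyperlink{SLDP}{(SLDP)} and verify \eqref{4-4}, the remaining steps match the paper's proof.
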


Theorem \ref{thm4-1} follows from a general result established in \cite{MR3131505}.
We first state their result alongside our setting and then prove Theorem \ref{thm4-1}.

Recall the definition of the generator $L_N$.
Define the jump rates $R_N(\eta,\xi), \eta,\xi\in X_N$ using the formula
\begin{align*}
L_Nf(\eta) = \sum_{\xi\in X_N}R_N(\eta,\xi)[f(\xi)-f(\eta)], \quad f:X_N\to\R.
\end{align*}
Let $A_N$ be a sequence of subsets of $X_N$ and $H_{A_N}$ be the hitting time of $A_N$
\begin{align*}
H_{A_N}=\inf\{ t\ge0: \eta_t^N\in A_N\}. 
\end{align*}
Denote  the average rate at which the process jumps from $A_N^c$ to $A_N$ by $r_N(A_N^c, A_N)$
\begin{align*}
r_N(A_N^c, A_N)=\dfrac{1}{\mu_N(A_N^c)}\sum_{\xi\in A_N^c}\mu_N(\xi)R_N(\xi, A_N),
\end{align*}
where $R_N(\xi,A_N)=\sum_{\zeta\in A_N}R_N(\xi,\zeta)$.

The following result has been established in \cite{MR3131505}.

\begin{theorem}\cite[Corollary 1.2]{MR3131505}\label{thm4-2}
Let $A_N$ be a sequence of subsets of $X_N$ such that
\begin{align}
&\lim_{N\to\infty}\mu_N(A_N)=0, \label{4-1}\\
&t_\mix^N(1/4) \ll r_N(A_N^c, A_N)^{-1}, \label{4-4}
\end{align}
and there exists a sequence $S_N$ such that
\begin{align}\label{4-2}
t^N_\mix(1/4) \ll S_N \ll \bb E_{\mu_N}[H_{A_N}].
\end{align}
Further, let $\{\nu_N\}_N$ be a sequence of probability measures on $X_N$ such that
\begin{align}\label{4-3}
\lim_{N\to\infty} \bb P_{\nu_N}\left[ H_{A_N}< S_N \right]=0.
\end{align}
Then, $H_{A_N}/\bb E_{\mu_N}[H_{A_N}]$ under $\bb P_{\nu_N}$ converges
in distribution to a mean one exponential random variable.
\end{theorem}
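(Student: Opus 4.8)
The plan is to show that $H_{A_N}$, normalised by $m_N:=\bb E_{\mu_N}[H_{A_N}]$, becomes memoryless in the limit, exploiting the separation of time scales in the hypotheses: the process re-equilibrates on the scale $t_\mix^N(1/4)$, which by \eqref{4-2} and \eqref{4-4} is much shorter than both $S_N$ and $r_N(A_N^c,A_N)^{-1}$, while a visit to $A_N$ occurs only on the much longer scale $m_N$. First I would reduce the assertion to a stationary initial law. Fix $t>0$; by the Markov property at time $S_N$,
\[
\bb P_{\nu_N}\big(H_{A_N}>S_N+s\big)=\bb E_{\nu_N}\big[\mathbf 1\{H_{A_N}>S_N\}\,\bb P_{\eta^N_{S_N}}(H_{A_N}>s)\big]
\]
By \eqref{4-3} the indicator may be removed at the cost of an additive $o(1)$, and since $S_N\gg t_\mix^N(1/4)$ by \eqref{4-2}, the law of $\eta^N_{S_N}$ under $\bb P_{\nu_N}$ is within $2^{-\lfloor S_N/t_\mix^N(1/4)\rfloor}=o(1)$ of $\mu_N$ in total variation; hence the right-hand side equals $\bb P_{\mu_N}(H_{A_N}>s)+o(1)$, uniformly in $s\ge 0$. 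Taking $s=t m_N-S_N$ (nonnegative for large $N$, since $S_N\ll m_N$) and using that $r\mapsto\bb P_{\mu_N}(H_{A_N}>r m_N)$ is nonincreasing, it suffices to prove $\bb P_{\mu_N}(H_{A_N}>t m_N)\to e^{-t}$ for each $t>0$; monotonicity then makes this limit locally uniform in $t$, as the reduction requires. Since $\mu_N(A_N)\to 0$ by \eqref{4-1}, $\mu_N$ may be replaced by $\mu_N(\cdot\mid A_N^c)$ here.

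For the stationary estimate I would introduce an intermediate time scale. By \eqref{4-4} pick $u_N$ with $t_\mix^N(1/4)\ll u_N$, $u_N\,r_N(A_N^c,A_N)\to 0$ and $u_N\to\infty$, and set $p_N:=\bb P_{\mu_N}(\text{the process visits }A_N\text{ during }(0,u_N])$; bounding the expected number of jumps from $A_N^c$ into $A_N$ in a window of length $u_N$ by stationarity gives $p_N\le u_N\,r_N(A_N^c,A_N)+\mu_N(A_N)\to 0$. Write $q_N(k):=\bb P_{\mu_N}(H_{A_N}>k u_N)$ and let $\widehat\mu_N^{(k)}(B):=\bb P_{\mu_N}(\eta^N_{ku_N}\in B,\ H_{A_N}>k u_N)$, a sub-probability measure of total mass $q_N(k)$. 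Running one further block of length $u_N$, the process both mixes — because $u_N\gg t_\mix^N(1/4)$, so the total-variation distance of its law to $\mu_N$ contracts geometrically in the number of blocks, by submultiplicativity — and loses only a fraction $p_N(1+o(1))$ of its mass to the constraint $\{H_{A_N}>u_N\}$ when started from a near-equilibrium configuration. A perturbation argument for the killed one-block kernel then yields
\[
q_N(k)=(1-p_N)^{k}\,(1+o(1))\qquad\text{uniformly over }k\text{ with }k u_N\le C m_N,\ \text{for any fixed }C.
\]
Summing, $m_N=\int_0^\infty q_N(s)\,ds=u_N\sum_{k\ge 0}q_N(k)\,(1+o(1))=(u_N/p_N)(1+o(1))$, the range $k u_N\gg m_N$ contributing negligibly; hence $m_N p_N/u_N\to 1$, and therefore
\[
\bb P_{\mu_N}\big(H_{A_N}>t m_N\big)=(1-p_N)^{\,t m_N/u_N}(1+o(1))=\exp\!\Big(-t\,\tfrac{m_N p_N}{u_N}(1+o(1))\Big)\longrightarrow e^{-t}
\]
Combined with the reduction of the first paragraph, this gives the convergence in distribution of $H_{A_N}/\bb E_{\mu_N}[H_{A_N}]$ under $\bb P_{\nu_N}$ to a mean-one exponential variable.

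The main obstacle is the uniform estimate $q_N(k)=(1-p_N)^k(1+o(1))$ over the full range $k u_N\lesssim m_N$, that is, controlling the interaction of the killing with the mixing across the $\Theta(m_N/u_N)$ blocks that separate the equilibration scale from the hitting scale. A naive iteration of a one-step bound accumulates an additive mixing error of order $(m_N/u_N)\,2^{-\lfloor u_N/t_\mix^N(1/4)\rfloor}$, which is wasteful; the correct point is that the distance of the killed evolution to its (normalised) quasi-equilibrium decays geometrically in the number of blocks, hence is already negligible at the hitting scale as soon as $u_N\gg t_\mix^N(1/4)$. The cleanest rigorous implementation is spectral: the killed one-block kernel is a perturbation of norm $O(p_N)$ of the operator $\xi\mapsto\bb P_\xi(\eta^N_{u_N}\in\cdot)$, whose iterates converge geometrically fast to the rank-one projection $f\mapsto\bb E_{\mu_N}[f]$; standard perturbation theory then produces a principal eigenvalue $1-p_N+o(p_N)$ with principal left and right eigenvectors within $o(1)$ of $\mu_N$ and of the constant function, from which both $m_N\sim u_N/p_N$ and the exponential law follow. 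The existence of a scale $u_N$ with $t_\mix^N(1/4)\ll u_N$ and $u_N\,r_N(A_N^c,A_N)\to 0$ — precisely the strict separation $t_\mix^N(1/4)\ll r_N(A_N^c,A_N)^{-1}$ of \eqref{4-4}, together with $t_\mix^N(1/4)\ll S_N\ll m_N$ — is exactly what makes this two-scale scheme run.
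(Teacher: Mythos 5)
First, note that the paper does not prove this statement at all: Theorem \ref{thm4-2} is imported verbatim from \cite[Corollary 1.2]{MR3131505} and used as a black box, so there is no internal proof to compare against. Your overall strategy --- reduce to the stationary start via the Markov property at time $S_N$ together with \eqref{4-3} and submultiplicativity of the total variation distance, then run a two--scale block argument on an intermediate scale $u_N$ with $t_\mix^N(1/4)\ll u_N\ll r_N(A_N^c,A_N)^{-1}$ and identify the per-block survival probability --- is indeed the standard route to such results, and the first paragraph of your argument is sound (modulo the minor point that one should also impose $u_N\le S_N$ so that $u_N\ll \bb E_{\mu_N}[H_{A_N}]$, and supply a crude geometric tail bound for $k u_N\gg \bb E_{\mu_N}[H_{A_N}]$, both of which are routine).

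The genuine gap is exactly at the step you flag as the main obstacle, and your proposed spectral fix does not close it. The killed one-block operator $Qf(\xi)=\bb E_\xi[f(\eta^N_{u_N})\mathbf 1\{H_{A_N}>u_N\}]$ is \emph{not} a perturbation of norm $O(p_N)$ of $\xi\mapsto \bb E_\xi[f(\eta^N_{u_N})]$ in any norm in which the unkilled kernel is known to contract: the difference applied to $f$ is $\bb E_\xi[f(\eta^N_{u_N})\mathbf 1\{H_{A_N}\le u_N\}]$, whose size is governed by $\sup_\xi \bb P_\xi(H_{A_N}\le u_N)$, and this supremum equals $1$ for $\xi\in A_N$ and is close to $1$ for states near $A_N$, regardless of how small $p_N=\bb P_{\mu_N}(H_{A_N}\le u_N)$ is. The killing is small only in $\mu_N$-average, i.e.\ as an $L^1(\mu_N)$ quantity, while the geometric contraction furnished by $t_\mix^N(1/4)\ll u_N$ lives in $L^\infty$/total variation; and since the chain here is not reversible (the paper stresses that $\mu_N$ is neither Bernoulli nor Gibbs), there is no $L^2(\mu_N)$ spectral theory to mediate between the two. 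Consequently ``standard perturbation theory'' does not deliver a principal eigenvalue $1-p_N+o(p_N)$, and the uniform multiplicative estimate $q_N(k)=(1-p_N)^k(1+o(1))$ over $k u_N\lesssim \bb E_{\mu_N}[H_{A_N}]$ --- which is the entire content of the theorem, since a naive iteration only controls the decay rate up to a constant factor --- remains unproved. The known proofs circumvent this by tracking the law conditioned on survival (a quasi-stationary object), showing it stabilizes under the combined action of averaged killing and contraction, and, crucially, by exploiting that the normalization by $\bb E_{\mu_N}[H_{A_N}]$ makes it unnecessary to identify the decay rate with $p_N/u_N$; your sketch conflates these two points. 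As it stands, the argument is an accurate outline of the right approach with its central lemma asserted rather than proved.
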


We can now prove Theorem \ref{thm4-1}.

\begin{proof}[Proof of Theorem \ref{thm4-1}]
For Theorem \ref{thm4-2}, to prove Theorem \ref{thm4-1},
it is enough to show \eqref{4-1}-\eqref{4-3} in the case $A_N=\mc O^N$ and $\nu_N=\delta_{\eta^N}$
for a given sequence $\eta^N\in \mc A_*^N$.

Fix any sequence $\eta^N\in \mc A_*^N$.
Note that \eqref{4-1} is nothing but \eqref{4-5},
and \eqref{4-3} with $S_N=e^{(h_*/2)N}$ holding by Lemma \ref{lem4-1}.
Moreover, the lower bound of \eqref{4-2} with $S_N=e^{(h_*/2)N}$ is clear by \hyperlink{MT}{(MT)}.
The upper bound of \eqref{4-2} can be computed as
\begin{align*}
\bb E_{\mu_N}\left[H_N^{\mc O} \right]
&= \sum_{\xi\in X_N} \bb E_{\xi}\left[H_N^{\mc O} \right] \mu_N(\xi) \\
&\ge \sum_{\xi\in\mc A_*^N} \bb E_{\xi}\left[H_N^{\mc O} {\bf 1}\left\{H_N^{\mc O}> e^{Nh_*}\right\} \right] \mu_N(\xi)\\
&\ge e^{Nh_*}\sum_{\xi\in\mc A_*^N} \bb P_{\xi}\left(H_N^{\mc O}> e^{Nh_*}\right) \mu_N(\xi) \ge \dfrac12e^{Nh_*}\mu_N\left(\mc A_*^N\right).
\end{align*}
In the last inequality, we have used
\begin{align*}
\bb P_{\xi}\left(H_N^{\mc O}> e^{Nh_*}\right) \ge \dfrac12,
\end{align*}
for $N$  sufficiently large and any $\xi\in \mc A_*^N$
(this bound follows from Lemma \ref{lem4-1}).
By Lemma \ref{lem2-2}, we have $\mu_N(\mc A_*^N)\ge 1/2$ for
$N$ sufficiently large.
Thus, we have shown the upper bound of \eqref{4-2}.
By \hyperlink{MT}{(MT)}, to conclude the proof, it is enough to show that
there exists $b>0$ such that
\begin{align}\label{4-6}
r_N([\mc O^N]^c, \mc O^N) \le e^{-bN},
\end{align}
for $N$ sufficiently large. 

Let us prove \eqref{4-6} for some $b>0$.
Let $\partial \mc O^N$ be the outer boundary of $\mc O^N$:
\begin{align*}
\partial\mc O^N=\left[ \bigcup_{x\in\T_N} \left\{\eta\notin \mc O^N: \eta^{x,x+1}\in \mc O^N \right\}\right]
\cup \left[ \bigcup_{x\in\T_N} \left\{\eta\notin \mc O^N: \eta^x\in \mc O^N \right\} \right].
\end{align*}
Note that for each $\xi\in [\mc O^N]^c$, $R_N(\xi,\mc O^N)=0$ unless $\xi\in\partial\mc O^N$
and that 
\begin{align*}
R_N(\xi,\mc O^N)\le\sum_{\zeta\in X_N}R_N(\xi,\zeta) \le N^3/2+N\|c\|_\infty.
\end{align*}
Then we have
\begin{align*}
r_N(A_N^c, A_N)
=\dfrac{1}{\mu_N([\mc O^N]^c)}\sum_{\xi\in \partial\mc O^N}\mu_N(\xi)R_N(\xi, \mc O^N)
\le N^4 \dfrac{\mu_N(\partial\mc O^N)}{\mu_N([\mc O^N]^c)},
\end{align*}
for $N$ sufficiently large.

By \eqref{4-5}, we have
\begin{align*}
\lim_{N\to\infty}\mu_N([\mc O^N]^c)=1.
\end{align*}
To estimate $\mu_N(\partial\mc O^N)$, let $\mc K$ be the closed set $\mc B(\alpha_*, \bar\varrho_*)^c$.
Then we have $\partial \mc O^N\subset \mc K^N$ for $N$ sufficiently large and by Lemma \ref{lemb-4}, we have
\begin{align*}
\inf_{\varrho\in\mc K}V_*(\varrho) > 0.
\end{align*}
Since
\begin{align*}
\mu_N(\partial\mc O^N) \le \mu_N(\partial\mc K^N) = \mc P_N(\mc K),
\end{align*}
 it follows from the upper bound of \hyperlink{SLDP}{(SLDP)} together with the previous bound that
\begin{align*}
\limsup_{N\to\infty}\dfrac1N\log \mu_N(\partial\mc O^N) \le - \inf_{\varrho\in\mc K}V_*(\varrho).
\end{align*}
Summarizing the previous arguments, we obtain
\begin{align*}
r_N(A_N^c, A_N) \le \exp\left\{-\dfrac{N}{2}\inf_{\varrho\in\mc K}V_*(\varrho)\right\},
\end{align*}
for $N$ sufficiently large. Thus \eqref{4-6} is proven, completing the proof of Theorem \ref{thm4-1}.
\end{proof}

\smallskip\noindent{\bf Acknowledgments.}
The author gratefully acknowledges Professor Tanaka Ryokichi for providing many comments
on an earlier version of this paper, significantly improving the presentation of the paper.
K.T. is supported by JSPS Grant-in-Aid for Early-Career Scientists Grant Number 18K13426, JST CREST Mathematics (15656429).

\bibliographystyle{alpha}
\bibliography{reference}

\appendix

\section{Reaction-diffusion equation}\label{seca}

For reader's convenience, we collect miscellaneous lemmata from \cite{MR3765880,MR3947320} which are used in this paper.
When we need a generalization of existing results, we give a proof for the sake of completeness.

The following standard result is used for proving Proposition \ref{propa-1}.

\begin{lemma}\cite[Lemma 7]{MR3947320}\label{lema-5}
There exists a constant $C_0>0$ such that for any weak solutions
$\rho^{j}$, $j=1,2$, to the Cauchy problem \eqref{hdleq} with the initial
condition $\rho_{0}^{j}$ and for any $t>0$, 
\begin{equation*}
\|\rho_{t}^{1}-\rho_{t}^{2}\|_2 \le
e^{C_0 t}\|\rho_{0}^{1}-\rho_{0}^{2}\|_2 .
\end{equation*}
\end{lemma}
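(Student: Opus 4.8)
The plan is to establish a Gronwall-type $L^2$ estimate for the difference $w_t := \rho_t^1 - \rho_t^2$. First I would subtract the two copies of the Cauchy problem \eqref{hdleq}, so that $w$ solves, in the weak sense, $\partial_t w = (1/2)\Delta w + \bigl(F(\rho_t^1) - F(\rho_t^2)\bigr)$ with initial datum $w_0 = \rho_0^1 - \rho_0^2$. Testing this equation against $w_t$ and integrating over $\T$ — where there is no boundary, hence no boundary term — yields the energy identity
\[
\frac{1}{2}\,\frac{d}{dt}\,\|w_t\|_2^2 \;=\; -\frac{1}{2}\int_\T |\nabla w_t(\theta)|^2\, d\theta \;+\; \int_\T \bigl(F(\rho_t^1(\theta)) - F(\rho_t^2(\theta))\bigr)\, w_t(\theta)\, d\theta .
\]

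Next I would discard the nonpositive Dirichlet-energy term and bound the reaction term. Since weak solutions to \eqref{hdleq} take values in $[0,1]$ and $F$ is a polynomial, $F$ is Lipschitz on $[0,1]$ with some constant $L = L(F) > 0$, so $|F(\rho_t^1(\theta)) - F(\rho_t^2(\theta))| \le L\,|w_t(\theta)|$ pointwise and the reaction term is at most $L\,\|w_t\|_2^2$. This gives the differential inequality $\frac{d}{dt}\|w_t\|_2^2 \le 2L\,\|w_t\|_2^2$, and Gronwall's lemma yields $\|w_t\|_2^2 \le e^{2Lt}\,\|w_0\|_2^2$, that is, $\|\rho_t^1 - \rho_t^2\|_2 \le e^{Lt}\,\|\rho_0^1 - \rho_0^2\|_2$. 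Taking $C_0 = L$ completes the argument.

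The hard part will be justifying the energy identity at the level of weak solutions, which a priori lack the regularity needed to differentiate $t\mapsto\|w_t\|_2^2$ or to integrate by parts. I would handle this in the standard way: either pass to the mild (Duhamel) formulation
\[
w_t \;=\; e^{(t/2)\Delta} w_0 \;+\; \int_0^t e^{((t-s)/2)\Delta}\bigl(F(\rho_s^1) - F(\rho_s^2)\bigr)\, ds ,
\]
and use that the heat semigroup is a contraction on $L^2(\T)$ together with the above Lipschitz bound, or regularize (Galerkin truncation in the Fourier basis $\{e_k\}$ of \eqref{2-9}, or mollification in $\theta$), prove the inequality for the approximations, and pass to the limit using the uniform $L^2$ bounds. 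Either route reduces to the two ingredients already available in the paper: the $[0,1]$-valuedness of weak solutions to \eqref{hdleq} and the polynomial (hence locally Lipschitz) nature of $F$.
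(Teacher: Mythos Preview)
The paper does not prove this lemma; it simply cites it as \cite[Lemma~7]{MR3947320} and calls it a ``standard result.'' Your energy/Gronwall argument is exactly the standard proof one would expect behind that citation: subtract the two equations, test against $w_t=\rho_t^1-\rho_t^2$, drop the nonpositive Dirichlet term, use that $F$ is Lipschitz on $[0,1]$ (polynomial, solutions $[0,1]$-valued), and apply Gronwall. Your remarks about justifying the identity via the mild formulation or Galerkin approximation are the correct way to make the formal computation rigorous at the level of weak solutions. Nothing is missing.
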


The following proposition is a generalization of a part of \cite[Lemma 8]{MR3947320}.
If we take $\rho_0$ as a stationary solution to the Cauchy problem \eqref{hdleq},
\cite[Lemma 8]{MR3947320} can be recovered.

\begin{proposition}\label{propa-1}
Let $\rho_0:\T\to[0,1]$ be a measurable function and $\rho_t(\theta)=
\rho(t,\theta)$ be the unique weak solution to the Cauchy problem
\eqref{hdleq} with the initial condition $\rho_0$.
For any $\beta>0$ and $T>0$, there exists
$0<\beta_0<\beta$, depending only on $\beta$ and $T$,
such that for any measurable function $\tilde\rho_{0}:\T\to[0,1]$ with
$\tilde\rho_{0}(\theta)d\theta\in\mc{B}[\beta_0;\rho_0]$,
we have $\tilde\rho(t,\theta) d\theta\in \mc{B}[\beta;\rho_t]$ for
all $0\le t \le T$, where $\tilde\rho(t,\theta)$ is a unique
weak solution to the Cauchy problem \eqref{hdleq} with the initial
condition $\tilde\rho_{0}$.
\end{proposition}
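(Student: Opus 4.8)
The plan is to split the interval $[0,T]$ into a short initial layer $[0,s_0]$ and the remaining interval $[s_0,T]$, and to bound $d(\tilde\rho_t,\rho_t)$ on each by a different mechanism. On $[0,s_0]$ neither solution has time to move far in the weak metric $d$, uniformly over all initial data valued in $[0,1]$: testing the weak formulation of \eqref{hdleq} against the smooth functions $e_k$ and using $\Delta e_k=-(2\pi k)^2 e_k$, $\|\rho_r\|_{L^1(\T)}\le1$ and the boundedness of $F$ on $[0,1]$, one obtains $|\langle\rho_t-\rho_0,e_k\rangle|\le C_k t$ for an explicit, polynomially growing $C_k$, hence $d(\rho_t,\rho_0)\le C't$ with $C'=\sum_{k\in\Z}2^{-|k|}C_k<\infty$, and the same bound for $\tilde\rho$. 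Via $d(\tilde\rho_t,\rho_t)\le d(\tilde\rho_t,\tilde\rho_0)+d(\tilde\rho_0,\rho_0)+d(\rho_0,\rho_t)$ this gives $d(\tilde\rho_t,\rho_t)\le 2C's_0+\beta_0$ on $[0,s_0]$, which is $<\beta$ once $s_0$ and $\beta_0$ are chosen small.

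On $[s_0,T]$ I want to invoke Lemma \ref{lema-5}, but that lemma propagates \emph{$L^2$}-closeness, whereas the hypothesis only provides \emph{weak} closeness of $\tilde\rho_0$ and $\rho_0$ — and near $t=0$ the two solutions need not be $L^2$-close at all. Bridging this gap by the smoothing of the heat semigroup is the heart of the argument. Writing $g=\tilde\rho_0-\rho_0$ (so $\|g\|_{L^1(\T)}\le1$), subtracting the mild formulations of \eqref{hdleq} gives $\tilde\rho_t-\rho_t=e^{t\Delta/2}g+\int_0^t e^{(t-r)\Delta/2}\big(F(\tilde\rho_r)-F(\rho_r)\big)\,dr$, and a Gronwall estimate against $L:=\sup_{[0,1]}|F'|$ yields $\|\tilde\rho_{s_0}-\rho_{s_0}\|_2\le\|e^{s_0\Delta/2}g\|_2+(e^{Ls_0}-1)$. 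The first term is small once $d(\tilde\rho_0,\rho_0)$ is small: expanding $g$ in the basis $(e_k)$ and using $|\langle g,e_k\rangle|\le 2^{|k|}d(\tilde\rho_0,\rho_0)$ (from the definition \eqref{2-9} of $d$) together with $|\langle g,e_k\rangle|\le\sqrt2$ (from $\|g\|_{L^1(\T)}\le1$), one gets for any $K$
\[
\|e^{s\Delta/2}g\|_2^2=\sum_{k\in\Z}e^{-2\pi^2k^2s}\langle g,e_k\rangle^2\le d(\tilde\rho_0,\rho_0)^2\sum_{|k|\le K}2^{2|k|}+2\sum_{|k|>K}e^{-2\pi^2k^2s},
\]
so for each fixed $s>0$ this can be made arbitrarily small, uniformly in $\rho_0$, by choosing $K$ large and then $d(\tilde\rho_0,\rho_0)$ small. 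Propagating the resulting $L^2$-smallness of $\tilde\rho_{s_0}-\rho_{s_0}$ by Lemma \ref{lema-5} and converting back with \eqref{2-17} gives $d(\tilde\rho_t,\rho_t)\le 3e^{C_0T}\|\tilde\rho_{s_0}-\rho_{s_0}\|_2$ for all $t\in[s_0,T]$.

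It then remains to fix the constants in order. Given $\beta$ and $T$, set $\varepsilon_1=(\beta/3)e^{-C_0T}$; choose $s_0\in(0,T]$ small enough that $e^{Ls_0}-1<\varepsilon_1/2$ and $2C's_0<\beta/2$; then choose $\beta_0\in(0,\beta/2)$ small enough (applying the smoothing estimate with $s=s_0$) that $d(\tilde\rho_0,\rho_0)\le\beta_0$ forces $\|e^{s_0\Delta/2}g\|_2<\varepsilon_1/2$. With these choices $d(\tilde\rho_t,\rho_t)<\beta$ on $[0,s_0]$ by the initial-layer bound and $d(\tilde\rho_t,\rho_t)\le 3e^{C_0T}\varepsilon_1=\beta$ on $[s_0,T]$; since $s_0,\beta_0$ depend only on $\beta$, $T$ and the fixed model constants $C'$, $L$, $C_0$, the uniformity in $\rho_0$ required by the statement is built in. The main obstacle is precisely the weak-versus-$L^2$ topological mismatch described above; it does not arise in the stationary case \cite[Lemma 8]{MR3947320}, where $\rho_t\equiv\bar\rho$ is already smooth. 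A minor technical point to verify is that weak solutions of \eqref{hdleq} with bounded measurable initial data satisfy the mild formulation used here, which follows in the standard way from the weak formulation by testing against $e^{(t-\cdot)\Delta/2}\psi$.
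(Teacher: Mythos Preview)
Your proof is correct and follows essentially the same route as the paper's: a short-time weak-metric estimate on $[0,s_0]$ via the weak formulation against the $e_k$, Duhamel's formula plus heat-semigroup smoothing to obtain $L^2$-smallness of $\tilde\rho_{s_0}-\rho_{s_0}$ from weak smallness of $\tilde\rho_0-\rho_0$, propagation on $[s_0,T]$ by Lemma~\ref{lema-5}, and conversion back to $d$ via \eqref{2-17}. The only blemish is a harmless exponent in the smoothing display (it should be $e^{-4\pi^2k^2s}$, not $e^{-2\pi^2k^2s}$), which does not affect the argument.
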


\begin{proof}
Fix $\beta>0$ and $T>0$. Let $\rho_0:\T\to[0,1]$ be a measurable function and $\rho_t(\theta)=
\rho(t,\theta)$ be the unique weak solution of the Cauchy problem
\eqref{hdleq} with the initial condition $\rho_0$.
Recall the definition of the complete orthogonal normal
basis $\{ e_k; k\in\Z\}$ introduced before \eqref{2-9}.

Because $\rho_t(\theta)$ is a weak solution to the Cauchy problem \eqref{hdleq},
for any weak solution $\tilde\rho_t(\theta)=\tilde\rho(t,\theta)$ to the Cauchy problem
\eqref{hdleq}, with the initial condition $\tilde\rho_0$, we have
\begin{equation*}
d(\tilde\rho_t, \rho_t)  \le  d(\tilde\rho_0, \rho_0) +
\sum_{k\in\bb Z} \frac 1{2^{|k|}}\, \left| 
\dfrac{1}{2} \int_{0}^{t} ds \, \lan [\tilde\rho_s-\rho_s], \Delta e_k \ran 
+ \int_{0}^{t} ds \, \lan [F(\tilde\rho_s) -F(\rho_s)], e_k \ran\right|  .
\end{equation*}
The first term of the right-hand side is bounded by
$\beta/2$ if $\tilde\rho_0 \in \mc{B}[\beta/2;\rho_0]$.
However, the sum is less than or equal to
\begin{equation*}
t \sum_{k\in\bb Z} \frac 1{2^{|k|}}\, \left\{ (2\pi k)^2 + 2\|F\|_\infty \right\}
 =:  C_1 \,t ,
\end{equation*}
because $\rho_s, \tilde\rho_s$ are bounded by $1$, $F$ is bounded, and $\| e_k\|_2 =1$.
Hence, if we set $T_1=\beta/2C_1$, we have 
\begin{equation}\label{3-9}
\tilde\rho_t(\theta)d\theta  \in  \mc B[\beta;\rho_t]  ,
\end{equation}
for any $\tilde\rho_0(\theta)d\theta \in \mc{B}[\beta/2;\rho_0]$ and any $0\le t\le T_1$,

Let $P_t$ be the semigroup on $L^2(\T)$ generated by $(1/2)\Delta$.
Then, by Duhamel's formula,  we have
\begin{align}\label{3-5}
\|\tilde\rho_t- \rho_t\|_{2}
& \le  \| P_t(\tilde\rho_{0} - \rho_0)\|_{2}+
\int_{0}^{t} \left \| P_{t-s} \left[F(\tilde\rho_s)-F(\rho_s)\right]  \right\|_{2} ds \notag \\
& \le   \| P_t ( \tilde\rho_0-\rho_0) \|_2  +  t\|F'\|_{\infty}  .
\end{align}
Let $T_2=\min \{T, T_1, \beta/(6e^{C_0T}\|F'\|_\infty)\}$.

Because $P_t(\tilde\rho_0-\rho_0)$ is a solution to the heat equation,
there exists some $0 < \beta_0 < \beta/2$,
depending only on $\beta$ and $T$,
such that for any $\tilde\rho_0(\theta)d\theta\in \mc B[\beta_0;\rho_0]$
\begin{align}\label{3-6}
\| P_{T_2} ( \tilde\rho_0-\rho_0) \|_2  \le  \beta/(6e^{C_0T}) .
\end{align}
See the paragraph after (3.10) of \cite{MR3947320} for details.
Let $C_0$ be the constant appearing in Lemma \ref{lema-5}.
Then, by \eqref{3-5}, \eqref{3-6}, and Lemma \ref{lema-5}, we have
\begin{align}\label{3-7}
\|\tilde\rho_t - \rho_t \|_2  \le  e^{C_0(t-T_2)}\|\tilde\rho_{T_2} - \rho_{T_2}\|_2 \le \beta/3 ,
\end{align}
for any $\tilde\rho_0(\theta)d\theta\in \mc B[\beta_0;\rho_0]$ and $T_2\le t \le T$.

Therefore, it follows from $T_2 \le T_1$, \eqref{2-17}, \eqref{3-9}, and \eqref{3-7} that
$\tilde\rho_t(\theta)d\theta\in\mc B[\beta;\rho_t]$
for any $0 \le t \le T$ provided $\tilde\rho_0(\theta)d\theta\in\mc B[\beta_0;\rho_0]$,
which completing the proof of Proposition \ref{propa-1}.
\end{proof}

Because of Proposition \ref{propa-1}, we can obtain the following corollary.

\begin{corollary}\label{cora-1}
Under the notations of the proof of Lemma \ref{lem3-2}, 
the application $\rho(\theta)d\theta\in\mc B_i \cap \mc M_{+,1}\mapsto\tau^{(i)}(\rho)$
is lower semicontinuous with respect to the weak topology.
Namely, for any fixed $\rho(\theta)d\theta\in \mc B_i\cap \mc M_{+,1}$ and
for any sequence $\{\rho_n(\theta)d\theta\}_n$ in $\mc B_i\cap \mc M_{+,1}$,
which converges to $\rho(\theta)d\theta$ in the weak topology, we have
\begin{align*}
\tau^{(i)}(\rho) \le \liminf_{n\to\infty} \tau^{(i)}(\rho_n).
\end{align*}
\end{corollary}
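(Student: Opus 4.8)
The plan is to run the standard compactness argument for lower semicontinuity of hitting times of a closed set, the only nontrivial ingredient being the continuous dependence of the weak solution of \eqref{hdleq} on its initial datum, uniformly on compact time intervals and in the weak topology --- which is exactly Proposition \ref{propa-1}. Fix $\rho(\theta)d\theta\in\mc B_i\cap\mc M_{+,1}$ and a sequence $\{\rho_n(\theta)d\theta\}_n$ in $\mc B_i\cap\mc M_{+,1}$ converging to $\rho(\theta)d\theta$ in the weak topology; since $d$ metrizes the weak topology this is the same as $d(\rho_n,\rho)\to0$. Put $L=\liminf_{n\to\infty}\tau^{(i)}(\rho_n)$. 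Each $\tau^{(i)}(\rho_n)$ is finite (as recorded in the proof of Lemma \ref{lem3-2}), so $L<\infty$; pass to a subsequence $\{\rho_{n_k}\}_k$ realizing the liminf, i.e.\ $s_k:=\tau^{(i)}(\rho_{n_k})\to L$, and fix $T>L$. Discarding finitely many indices we may assume $s_k\le T$ for all $k$.

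Next I would record the two facts about the flow $\rho_0\mapsto\rho_\cdot$ that are needed. First, applying Proposition \ref{propa-1} with $\rho$ in the role of the reference initial datum $\rho_0$ and $\rho_{n_k}$ in the role of the perturbation $\tilde\rho_0$: for every $\beta>0$ there is $\beta_0>0$ such that $\tilde\rho_0(\theta)d\theta\in\mc B[\beta_0;\rho]$ forces $\tilde\rho_t(\theta)d\theta\in\mc B[\beta;\rho_t]$ for all $0\le t\le T$; since $d(\rho_{n_k},\rho)\le\beta_0$ for $k$ large, this yields
\begin{align*}
\sup_{0\le t\le T} d(\rho_{n_k,t},\rho_t)\longrightarrow 0 \qquad (k\to\infty).
\end{align*}
Second, from the weak formulation of \eqref{hdleq} and the identity $\langle\varrho,\Delta e_k\rangle=-(2\pi k)^2\langle\varrho,e_k\rangle$, together with $0\le\rho_s\le1$ and the boundedness of $F$, one gets a constant $C_1>0$ depending only on $F$ such that $d(\rho_t,\rho_{t'})\le C_1|t-t'|$ for every weak solution of \eqref{hdleq} and all $t,t'\ge0$ (this is the computation already displayed in the proof of Proposition \ref{propa-1}); in particular $t\mapsto\rho_t$ is continuous on $[0,T]$.

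Then I would conclude as follows. By the definition of the first entrance time $\tau^{(i)}$ and the fact that $\mc B[\alpha_i;\bar\varrho_i]$ is closed, we have $\rho_{n_k,s_k}(\theta)d\theta\in\mc B[\alpha_i;\bar\varrho_i]$, i.e.\ $d(\rho_{n_k,s_k},\bar\varrho_i)\le\alpha_i$. Hence, by the triangle inequality and the two facts above,
\begin{align*}
d(\rho_L,\bar\varrho_i)\le d(\rho_L,\rho_{s_k})+d(\rho_{s_k},\rho_{n_k,s_k})+d(\rho_{n_k,s_k},\bar\varrho_i)
\le C_1|L-s_k|+\sup_{0\le t\le T} d(\rho_{n_k,t},\rho_t)+\alpha_i .
\end{align*}
Letting $k\to\infty$ kills the first two terms, so $d(\rho_L,\bar\varrho_i)\le\alpha_i$, that is, $\rho_L(\theta)d\theta\in\mc B[\alpha_i;\bar\varrho_i]$. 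By the definition of $\tau^{(i)}(\rho)$ this forces $\tau^{(i)}(\rho)\le L=\liminf_{n\to\infty}\tau^{(i)}(\rho_n)$, which is the asserted lower semicontinuity.

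There is essentially no hard step here beyond bookkeeping, and the one point that needs care is invoking Proposition \ref{propa-1} with the correct orientation: the fixed limit $\rho$ must be the reference initial datum and the approximants $\rho_n$ the perturbations, so that uniform-in-time closeness of $\rho_{n_k,\cdot}$ to $\rho_\cdot$ is available. The only genuine inequality one has to supply is the time-equicontinuity used to pass from the random times $s_k$ to the limit $L$; this is the elementary Lipschitz bound above, and one could alternatively avoid it altogether by using continuity of the single limiting trajectory $t\mapsto\rho_t$ together with $s_k\to L$ to replace $d(\rho_L,\rho_{s_k})\to0$.
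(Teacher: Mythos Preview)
Your proof is correct and rests on the same key ingredient (Proposition \ref{propa-1}) as the paper; where the paper shows that for every $\e>0$ and $n$ large the trajectory $\rho_n(\cdot)$ avoids $\mc B[\alpha_i;\bar\varrho_i]$ on $[0,\tau^{(i)}(\rho)-\e]$, you instead extract a subsequence realizing the liminf $L$ and show $\rho_L\in\mc B[\alpha_i;\bar\varrho_i]$, which is an equivalent packaging of the same continuity argument. The only slip is the claim that $L<\infty$ follows from each $\tau^{(i)}(\rho_n)$ being finite --- that implication fails in general, but it is harmless here since the case $L=\infty$ makes the desired inequality trivial.
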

\begin{proof}
This corollary is a direct consequence of Proposition \ref{propa-1}.
To see this, take any $\rho(\theta)d\theta\in \mc B_i\cap \mc M_{+,1}$ and
any sequence $\{\rho_n(\theta)d\theta\}_n$ in $\mc B_i\cap \mc M_{+,1}$
converging to $\rho(\theta)d\theta$ in the weak topology. We can assume
the loss of generality that $\tau^{(i)}(\rho)>0$.

Let $\rho(t)$ and  $\rho_n(t)$ be the unique weak solutions to the Cauchy problem \eqref{hdleq}
with the initial conditions $\rho$ and $ \rho_n$, respectively.
Since $\mc B[\alpha_i;\bar\varrho_i]$ is closed, it follows from the definition of $\tau^{(i)}(\rho)$ that
$\rho(\tau^{(i)}(\rho))\in \mc B[\alpha_i;\bar\varrho_i]$ and $\rho(t)\notin \mc B[\alpha_i;\bar\varrho_i]$
for any $0 \le t  < \tau^{(i)}(\rho)$.

Fix the small $\e>0$ and let $t_\e=\tau^{(i)}(\rho)-\e>0$. Since $\mc B[\alpha_i;\bar\varrho_i]$ is closed and $\rho(t_\e)\notin \mc B[\alpha_i;\bar\varrho_i]$,
we have
\begin{align*}
\alpha_\e:= \inf \{ d(\rho(t_\e), \tilde\rho) : \tilde\rho\in \mc B[\alpha_i;\bar\varrho_i]\} >0.
\end{align*}
By applying Proposition \ref{propa-1} for $\beta=\alpha_\e/2$ and $T=t_\e$, there exists $\beta_\e<\alpha_\e/2$ such that
for any measurable function $\tilde\rho_{0}:\T\to[0,1]$ with
$\tilde\rho_{0}(\theta)d\theta\in\mc{B}[\beta_\e;\rho_0]$,
we have $\tilde\rho(t,\theta) d\theta\in \mc{B}[\alpha_\e/2;\rho_t]$ for
all $0\le t \le t_\e$, where $\tilde\rho(t,\theta)$ is a unique
weak solution to the Cauchy problem \eqref{hdleq} with the initial
condition $\tilde\rho_{0}$. Then we have $\rho_n(t)\notin \mc B[\alpha_i;\bar\varrho_i]$
for any $0 \le t  \le t_\e$ if $\rho_n(\theta)d\theta\in\mc{B}[\beta_\e;\rho_0]$.
Therefore, for any large enough $n$,  we have
\begin{align*}
t_\e \le \tau^{(i)}(\rho_n).
\end{align*}
Taking $n\to\infty$ and $\e\to0$ completes the proof of Corollary \ref{cora-1}.
\end{proof}

Recall the definitions of $\rho_i, \bar\rho_i$, and $\bar\varrho_i$ from Section \ref{sec2}.
Note that $x\in(0,1)$ is a limit point of the dynamical system
\begin{align*}
\dfrac{d}{dt}x_t = - F(x_t),
\end{align*}
if, and only if, $x=\rho_i, i=1,\ldots,\ell$.
The following result shows that the constant function $\bar\rho_i$
is a local attractor of the dynamical system defined by \eqref{hdleq}
with respect to the weak topology.

\begin{lemma}\cite[Lemma 11]{MR3947320}\label{lema-3}
Let $\varepsilon>0$ and $i=1,\ldots,\ell$.
There exists $\gamma_i>0$ such
that for any density condition $\rho: \bb T \to [0,1]$ such that
$\rho(\theta) d\theta \in \mc B(\gamma_i;\bar\varrho_i)$,
$\rho_t$ converges in the supremum norm to $\bar\rho_i$, as
$t\to\infty$, where $\rho_t(\theta) = \rho(t,\theta)$ is a
unique weak solution to the Cauchy problem \eqref{hdleq} with the initial condition
$\rho$. Moreover, $\pi_t(d\theta) = \rho(t,\theta) d\theta$ belongs
to $\mc B(\e;\bar\varrho_i)$ for all $t\ge 0$.
\end{lemma}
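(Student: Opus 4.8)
The plan is to reduce everything to a thin strip around the constant density $\rho_i$, on which the dynamics is governed --- via the parabolic comparison principle --- by the scalar reaction ODE $\dot x = F(x)$, and to use the smoothing of the heat semigroup to enter that strip quickly even though the initial datum is only weakly close to $\bar\varrho_i$. I would carry this out in three steps.

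\emph{Step 1: an invariant attracting strip.} Since $\rho_i$ is a local minimum of the polynomial $V$ and $F=-V'$, one can fix $\delta_0>0$ with $[\rho_i-\delta_0,\rho_i+\delta_0]\subset(0,1)$, $F>0$ on $(\rho_i-\delta_0,\rho_i)$ and $F<0$ on $(\rho_i,\rho_i+\delta_0)$; shrinking $\delta_0$ further I would also require $3\delta_0<\e$. Let $y_t,z_t$ solve $\dot x=F(x)$ with $y_0=\rho_i+\delta_0$ and $z_0=\rho_i-\delta_0$; then $y_t\downarrow\rho_i$ and $z_t\uparrow\rho_i$ as $t\to\infty$, because $\rho_i$ is the only zero of $F$ in $[\rho_i-\delta_0,\rho_i+\delta_0]$. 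The spatially constant trajectories $(t,\theta)\mapsto y_t$ and $(t,\theta)\mapsto z_t$ solve \eqref{hdleq}, so the comparison principle for \eqref{hdleq} gives: if $\|\rho(t_0,\cdot)-\rho_i\|_\infty\le\delta_0$ at some time $t_0$, then $z_{t-t_0}\le\rho(t,\cdot)\le y_{t-t_0}$ for all $t\ge t_0$. Squeezing, $\|\rho_t-\rho_i\|_\infty\to0$ as $t\to\infty$; moreover $\rho_t$ stays in the strip, so by \eqref{2-17} the trajectory lies in $\mc B[3\delta_0;\bar\varrho_i]\subset\mc B(\e;\bar\varrho_i)$ for all $t\ge t_0$.

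\emph{Step 2: entering the strip, and staying in the $\e$-ball until then.} Exactly as in the proof of Proposition \ref{propa-1}, $d(\rho_t,\bar\varrho_i)\le d(\rho_0,\bar\varrho_i)+C_1 t$ for a constant $C_1$ depending only on $F$. I fix $t_0>0$ small enough that $C_1 t_0<\e/2$ and $t_0\|F\|_\infty<\delta_0/2$; then, provided $d(\rho_0,\bar\varrho_i)<\e/2$, the trajectory stays in $\mc B(\e;\bar\varrho_i)$ on $[0,t_0]$. It remains to pick $\gamma_i$ so that weak $\gamma_i$-closeness of $\rho_0$ forces $\|\rho_{t_0}-\rho_i\|_\infty<\delta_0$. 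From the Duhamel formula $\rho_{t_0}=P_{t_0}\rho_0+\int_0^{t_0}P_{t_0-s}F(\rho_s)\,ds$, the integral term has sup-norm at most $t_0\|F\|_\infty<\delta_0/2$ (as $0\le\rho_s\le1$ and $P_r$ is an $L^\infty$-contraction), while, expanding $\rho_0-\bar\rho_i$ in the basis $\{e_k\}$ from \eqref{2-9}, using $P_{t_0}e_k=e^{-2\pi^2k^2t_0}e_k$, $\|e_k\|_\infty\le\sqrt2$ and $|\lan\rho_0-\bar\varrho_i,e_k\ran|\le 2^{|k|}d(\rho_0,\bar\varrho_i)$, one gets $\|P_{t_0}(\rho_0-\bar\rho_i)\|_\infty\le\bigl(\sqrt2\sum_{k\in\Z}e^{-2\pi^2k^2t_0}2^{|k|}\bigr)\,d(\rho_0,\bar\varrho_i)$, the series converging since the heat factor beats $2^{|k|}$. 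Choosing $\gamma_i$ to be the minimum of $\e/2$ and of $\delta_0/2$ divided by that series then yields $\|\rho_{t_0}-\rho_i\|_\infty<\delta_0$, and combining Steps 1 and 2 proves both assertions of the lemma.

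\emph{Main obstacle.} The scalar phase-plane analysis and the bookkeeping of the constants $\delta_0,t_0,\gamma_i$ are routine. The genuinely delicate point is the smoothing estimate in Step 2 --- converting mere weak-topology closeness of $\rho_0$ into a sup-norm bound at $t=t_0$ --- which is the same mechanism used around (3.10) of \cite{MR3947320} and in the proof of Proposition \ref{propa-1}; one also has to know that the comparison principle applies to the (a priori only weak) solutions of \eqref{hdleq}, which follows from the regularizing character of the equation together with the uniqueness supplied by Lemma \ref{lema-5}.
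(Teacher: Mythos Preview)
The paper does not supply its own proof of this lemma; it is quoted from \cite[Lemma~11]{MR3947320} and merely stated in Appendix~\ref{seca}. Your argument is correct and follows the natural route: heat-semigroup smoothing (the same mechanism as around (3.10) of \cite{MR3947320} and in Proposition~\ref{propa-1} here) upgrades weak closeness of $\rho_0$ to a uniform bound $\|\rho_{t_0}-\rho_i\|_\infty<\delta_0$ at a short positive time, after which the scalar comparison principle with the ODE $\dot x=F(x)$ traps $\rho_t$ between the monotone solutions $z_{t-t_0}\uparrow\rho_i$ and $y_{t-t_0}\downarrow\rho_i$. The bookkeeping for the ``moreover'' clause (the crude bound $d(\rho_t,\bar\varrho_i)\le d(\rho_0,\bar\varrho_i)+C_1t$ on $[0,t_0]$, then \eqref{2-17} afterwards) is handled correctly. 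The two points you flag --- validity of comparison for weak solutions and the weak-to-uniform conversion --- are indeed the only nonroutine ingredients, and your justification (parabolic regularization plus uniqueness from Lemma~\ref{lema-5}, and the convergent series $\sum_k 2^{|k|}e^{-2\pi^2k^2t_0}$) is sound.
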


Lemma \ref{lema-3} immediately implies the following result.

\begin{corollary}\label{cora-2}
For each $i=1,\ldots,\ell$, we have
\begin{align*}
\inf \left\{ d(\bar\varrho_i, \bar\varrho): \bar\varrho\in\mc M_\sol, \bar\varrho\neq\bar\varrho_i \right\} > 0.
\end{align*}
\end{corollary}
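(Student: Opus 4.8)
The plan is to read off the statement directly from Lemma~\ref{lema-3} by a short contradiction argument. Fix $i\in\{1,\ldots,\ell\}$ and apply Lemma~\ref{lema-3} with, say, $\e=1$; this yields a constant $\gamma_i>0$ with the property that every weak solution of the Cauchy problem~\eqref{hdleq} whose initial datum $\rho:\T\to[0,1]$ satisfies $\rho(\theta)d\theta\in\mc B(\gamma_i;\bar\varrho_i)$ converges to $\bar\rho_i$ in the supremum norm as $t\to\infty$. I claim this $\gamma_i$ is a lower bound for the infimum in the statement.

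To see this, take any $\bar\varrho\in\mc M_\sol$ with $\bar\varrho\neq\bar\varrho_i$, and write $\bar\varrho(d\theta)=\bar\rho(\theta)d\theta$ with $\bar\rho\in S$. Since $\bar\rho$ is a classical solution of the elliptic equation~\eqref{seeq}, the time-independent trajectory $(t,\theta)\mapsto\bar\rho(\theta)$ is a classical, hence weak, solution of~\eqref{hdleq} with initial condition $\bar\rho$; by uniqueness of weak solutions it is the weak solution, so $\rho_t\equiv\bar\rho$ for all $t\ge0$. If one had $d(\bar\varrho_i,\bar\varrho)<\gamma_i$, i.e.\ $\bar\varrho\in\mc B(\gamma_i;\bar\varrho_i)$, then Lemma~\ref{lema-3} would force $\rho_t\to\bar\rho_i$ in the supremum norm; but $\rho_t\equiv\bar\rho$, so $\bar\rho=\bar\rho_i$ pointwise (both densities being continuous), contradicting $\bar\varrho\neq\bar\varrho_i$ in $\mc M_+$. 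Hence $d(\bar\varrho_i,\bar\varrho)\ge\gamma_i$ for every $\bar\varrho\in\mc M_\sol\setminus\{\bar\varrho_i\}$, and the infimum in the corollary is at least $\gamma_i>0$.

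There is essentially no obstacle here; the only two points requiring a word of care are that a classical solution of~\eqref{seeq} is a genuinely \emph{stationary} solution of the parabolic Cauchy problem~\eqref{hdleq} (so that Lemma~\ref{lema-3}, a statement about the parabolic flow, applies to it and would have to drive it to $\bar\rho_i$), and that the inequality $\bar\varrho\neq\bar\varrho_i$ of measures upgrades, by continuity of the two densities, to $\|\bar\rho-\bar\rho_i\|_\infty>0$, which is precisely what rules out convergence of the constant trajectory $\rho_t\equiv\bar\rho$ to $\bar\rho_i$.
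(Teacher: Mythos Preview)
Your proof is correct and is precisely the argument the paper has in mind: the paper simply states that Lemma~\ref{lema-3} ``immediately implies'' Corollary~\ref{cora-2} without further detail, and you have spelled out exactly that implication via the obvious contradiction (a nonconstant stationary solution inside $\mc B(\gamma_i;\bar\varrho_i)$ would have to flow to $\bar\rho_i$, which it cannot).
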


\section{Dynamical rate function}\label{secb}

In this appendix, we collect miscellaneous lemmata regarding the rate function of the dynamical large deviation principle

\begin{lemma}\cite[Proposition 4.1]{MR3765880}\label{lemb-5}
Fix $T>0$ and a measurable function $\rho:\T\to[0,1]$.
Let $\pi$ be a trajectory in $D([0,T], \mc M_+)$ such that $I_T(\pi|\rho)$ is finite.
Then $\pi$ belongs to $C([0,T],\mc M_{+,1})$ and $\pi(0,d\theta)=\rho(\theta)d\theta$.
\end{lemma}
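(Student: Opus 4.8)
The plan is to reduce the statement to a time-regularity claim, since the other two assertions are immediate from the definitions. Indeed, by definition of $I_T(\,\cdot\,|\rho)$ the value is $+\infty$ unless $\pi(0,d\theta)=\rho(\theta)d\theta$, so finiteness of $I_T(\pi|\rho)$ gives the initial condition for free. Moreover $I_T(\pi)=+\infty$ whenever $\mc E_T(\pi)=+\infty$, and $J_{T,G}(\pi)=+\infty$ for \emph{every} $G$ whenever $\pi\notin D([0,T],\mc M_{+,1})$; hence the hypothesis $I_T(\pi|\rho)=I_T(\pi)<\infty$ forces $\pi$ to be a trajectory in $\mc M_{+,1}$ (with left limits) and $\mc E_T(\pi)<\infty$, so in particular for every $t$ the density $\rho(t,\cdot)$ exists with values in $[0,1]$ and $\nabla\rho\in L^2([0,T]\times\T)$. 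It then suffices to prove that $t\mapsto\pi_t$ is continuous on $[0,T]$, because $\mc M_{+,1}$ is closed in $\mc M_+$ and a continuous $\mc M_+$-valued curve with values in $\mc M_{+,1}$ lies in $C([0,T],\mc M_{+,1})$.

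For the continuity I would test the bound $\bar J_{T,G}(\pi)\le I_T(\pi)=:M$ against functions localized in time. Fix $G\in C^\infty(\T)$, $\lambda\in\R$ and $0\le s<t<T$, and let $\psi_\delta\in C^\infty([0,T])$ be a bump with $\psi_\delta\equiv 1$ on $[s+\delta,t]$, support in $[s,t+\delta]$ and $0\le\psi_\delta\le 1$. Inserting $H_\delta(u,\theta)=\lambda\psi_\delta(u)G(\theta)\in C^{1,2}([0,T]\times\T)$ into $\bar J_{T,H_\delta}(\pi)\le M$ annihilates the boundary term; letting $\delta\downarrow 0$ — using right-continuity of $u\mapsto\lan\pi_u,G\ran$ for the $\partial_uH_\delta$ term (whose limit contributes $\lambda\lan\pi_t-\pi_s,G\ran$) and dominated convergence for the rest, recalling $\pi_u$ has mass $\le 1$, $0\le\chi\le 1/4$, and $B,D\ge 0$ are bounded — I obtain
\begin{align*}
\lambda\lan\pi_t-\pi_s,G\ran\le{}&M+\frac{\lambda}{2}\int_s^t\lan\pi_u,\Delta G\ran\,du+\frac{\lambda^2}{2}\int_s^t\lan\chi(\rho_u),(\nabla G)^2\ran\,du\\
&{}+\int_s^t\big\{\lan B(\rho_u),e^{\lambda G}-1\ran+\lan D(\rho_u),e^{-\lambda G}-1\ran\big\}\,du.
\end{align*}
Each of the last three integrals is bounded by $C(\lambda,G)\,|t-s|$ with $C(\lambda,G)$ finite (at worst of order $e^{|\lambda|\,\|G\|_\infty}$, from the reaction term). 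I would then let $\lambda=\lambda(\eta)\to\infty$ slowly as $\eta=|t-s|\to0$, say $\lambda(\eta)=c\log(1/\eta)$ with $c$ small depending on $\|G\|_\infty$, so that $M/\lambda(\eta)\to0$ and $C(\lambda(\eta),G)\,\eta/\lambda(\eta)\to0$; this yields $\lan\pi_t-\pi_s,G\ran\le o(1)$ uniformly as $|t-s|\to0$, and the same computation with $-G$ gives the matching lower bound. Thus $u\mapsto\lan\pi_u,G\ran$ is uniformly continuous on $[0,T)$ for every $G\in C^\infty(\T)$; a companion argument with $\psi_\delta$ supported near $T$, where the boundary term is instead kept, shows $\pi_{T^-}=\pi_T$. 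Since the metric $d$ in \eqref{2-9} is a uniformly convergent series in the coordinates $\lan\,\cdot\,,e_k\ran$, continuity of each coordinate upgrades to continuity of $u\mapsto\pi_u$ in the weak topology, finishing the proof.

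I expect the main obstacle to be exactly this time-regularity step: making the passage $\delta\downarrow0$ rigorous in the presence of the jumps of the trajectory, keeping the nonlinear reaction term controlled uniformly in the time increment, and dealing with the endpoints $t=0$ and $t=T$. The quadratic dependence on $\lambda$ of the mobility term together with the explicit (if $\lambda$-dependent) bound on the reaction term is what makes the optimization over $\lambda$ work; once the modulus-of-continuity estimate is in hand, the rest is bookkeeping.
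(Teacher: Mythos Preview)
The paper does not supply its own proof of this lemma: it is stated in Appendix~\ref{secb} purely as a citation of \cite[Proposition~4.1]{MR3765880}, so there is nothing in the present paper to compare your argument against.

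That said, your argument is sound and is the standard route for rate functions of this form. The two reductions at the start are indeed immediate from the definitions given in Section~\ref{sec2}: finiteness of $I_T(\pi|\rho)$ forces $\pi_0=\rho(\theta)d\theta$ by the very definition of $I_T(\cdot|\rho)$, and finiteness of $I_T(\pi)$ forces $\pi\in D([0,T],\mc M_{+,1})$ and $\mc E_T(\pi)<\infty$ by the definitions of $J_{T,G}$ and $I_T$. For the time-regularity step, testing $\bar J_{T,H_\delta}(\pi)\le M$ with $H_\delta=\lambda\psi_\delta G$, passing $\delta\downarrow 0$ via right-continuity of $u\mapsto\lan\pi_u,G\ran$ and dominated convergence, and then optimizing in $\lambda$ (first fix $\lambda$ large so that $M/\lambda$ is small, then take $|t-s|$ small) yields the uniform modulus of continuity for each $\lan\pi_\cdot,e_k\ran$; continuity in the metric $d$ of \eqref{2-9} follows. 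Your handling of the endpoint $t=T$ by keeping the boundary term is also correct. One cosmetic remark: rather than letting $\lambda=\lambda(\eta)$ depend on $\eta=|t-s|$, it is slightly cleaner to argue in two steps (given $\varepsilon>0$, choose $\lambda$ with $M/\lambda<\varepsilon/2$, then choose $\delta>0$ so that the remaining terms are below $\varepsilon/2$ whenever $|t-s|<\delta$); this avoids having to check that $e^{|\lambda(\eta)|\|G\|_\infty}\eta\to 0$.
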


\begin{lemma}\cite[Corollary 4.6]{MR3765880}\label{lemb-1}
Fix $T>0$. The density $\rho$ of a trajectory $\pi(t,d\theta)=\rho(t,\theta)d\theta$ in
$D([0,T], \mc M_{+,1})$ is the weak solution to the Cauchy problem
\eqref{hdleq} with initial condition $\rho_0$ if, and only if, $I_{T}(\pi|\rho_0)
= 0$.
\end{lemma}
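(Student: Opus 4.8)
The plan is to prove the two implications separately; the whole argument rests on three elementary nonnegativity facts: $1+u-e^{u}\le 0$ and $1-u-e^{-u}\le 0$ for every $u\in\R$ (both functions vanish together with their first derivative at $u=0$ and are strictly concave), the functions $B$ and $D$ are nonnegative on $[0,1]$ (each is an integral of a nonnegative quantity against $\nu_\rho$), and $\chi(\rho)=\rho(1-\rho)\ge 0$ for $0\le\rho\le 1$. A preliminary remark: in either direction the trajectory satisfies $\pi(0,d\theta)=\rho_0(\theta)d\theta$ — this is part of the definition of $I_T(\cdot|\rho_0)$ whenever $I_T(\pi|\rho_0)<\infty$, and it is part of the notion of weak solution on the other side — so it suffices to compare $\bar J_{T,G}(\pi)$ with $0$; note also that $J_{T,0}(\pi)\in\{0,\infty\}$, so $I_T(\pi)\ge 0$ always.

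Suppose first that $\rho$ is the weak solution of \eqref{hdleq} with initial datum $\rho_0$. Then $0\le\rho_t\le 1$ for all $t$ (invariance of $[0,1]$, since $F(0)>0>F(1)$) and $\rho\in L^2([0,T];H^1(\T))$ by standard parabolic regularity, so $\pi\in C([0,T],\mc M_{+,1})$ and $\mc E_T(\pi)<\infty$, whence $I_T(\pi)=\sup_G\bar J_{T,G}(\pi)$. For any $G\in C^{1,2}([0,T]\times\T)$ the weak formulation reads
$$
\lan\pi_T,G_T\ran-\lan\pi_0,G_0\ran-\int_0^T\lan\pi_t,\partial_tG_t+\tfrac12\Delta G_t\ran\,dt=\int_0^T\lan F(\rho_t),G_t\ran\,dt .
$$
Substituting this into the definition of $\bar J_{T,G}(\pi)$ and using $F=B-D$, one regroups the terms to obtain
$$
\bar J_{T,G}(\pi)=\int_0^T\!dt\int_\T\!d\theta\;\Bigl\{B(\rho_t)\bigl(1+G_t-e^{G_t}\bigr)+D(\rho_t)\bigl(1-G_t-e^{-G_t}\bigr)-\tfrac12\chi(\rho_t)(\nabla G_t)^2\Bigr\}\ \le\ 0 ,
$$
the inequality being pointwise in $(t,\theta)$ by the three facts above. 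Hence $I_T(\pi)=0$, i.e. $I_T(\pi|\rho_0)=0$.

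Conversely, suppose $I_T(\pi|\rho_0)=0$. Then $I_T(\pi|\rho_0)<\infty$, so $\pi(0,d\theta)=\rho_0(\theta)d\theta$ and, by Lemma \ref{lemb-5}, $\pi\in C([0,T],\mc M_{+,1})$; moreover $\sup_G\bar J_{T,G}(\pi)=0$, in particular $\bar J_{T,G}(\pi)\le 0$ for all $G\in C^{1,2}([0,T]\times\T)$. Fix $H\in C^{1,2}([0,T]\times\T)$ and apply this with $G=\e H$, $\e\in\R$. Since $\rho$ is bounded, $B$ and $D$ are bounded (polynomials restricted to $[0,1]$), and $H$ together with its derivatives is bounded, the expansions $e^{\pm\e H}-1=\pm\e H+O(\e^2)$ and $(\e\nabla H)^2=O(\e^2)$ hold with remainder uniform in $(t,\theta)$, so
$$
0\ \ge\ \bar J_{T,\e H}(\pi)\ =\ \e\,\ell(H)+O(\e^2),\qquad \ell(H):=\lan\rho_T,H_T\ran-\lan\rho_0,H_0\ran-\int_0^T\lan\rho_t,\partial_tH_t+\tfrac12\Delta H_t\ran\,dt-\int_0^T\lan F(\rho_t),H_t\ran\,dt .
$$
Dividing by $\e>0$ and then by $\e<0$ and letting $\e\to 0$ forces $\ell(H)=0$. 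As $H$ is arbitrary, $\rho$ satisfies the weak formulation of \eqref{hdleq}; together with $\pi(0,d\theta)=\rho_0(\theta)d\theta$ and uniqueness of weak solutions, $\rho$ is the weak solution with initial condition $\rho_0$.

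The only points requiring a little care — and they are not real obstacles — are the regularity bookkeeping, namely finiteness of $\mc E_T(\pi)$ for a weak solution, which legitimizes reading the term $-\tfrac12\lan\nabla\rho_t,\nabla G_t\ran$ as $\tfrac12\lan\rho_t,\Delta G_t\ran$ in the weak formulation, and the uniformity of the $O(\e^2)$ remainder in the linearization; both follow from boundedness of $\rho$, $B$, $D$ and smoothness of the test function. The heart of the proof is thus the two convexity inequalities in the first implication and the $G=\e H$ linearization trick in the second.
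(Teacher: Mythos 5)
The paper does not prove this lemma; it is quoted verbatim from \cite[Corollary 4.6]{MR3765880}, so there is no in-paper argument to compare against. Your proof is the standard one for this type of characterization (convexity inequalities $1+u-e^{u}\le 0$, $1-u-e^{-u}\le 0$ together with $B,D,\chi\ge 0$ for the ``only if'' direction, and the linearization $G=\e H$ for the ``if'' direction), and it is correct as written, with the regularity points (finiteness of $\mc E_T$ for the weak solution, uniformity of the $O(\e^{2})$ remainder) properly flagged and disposed of.
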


Recall the definition of $\mb D_{T,\beta}$ defined before \eqref{3-17}.

\begin{lemma}\cite[Lemma 14]{MR3947320}\label{lemb-3}
For each $\beta>0$ there exists $T=T(\beta)>0$ such that
\begin{align*}
\inf_{\pi\in \mb D_{T,\beta}}I_T(\pi) >0.
\end{align*}
In particular, 
for each $\beta>0$ and each $A>0$ there exists $T=T(\beta,A)>0$ such that
\begin{align*}
\inf_{\pi\in \mb D_{T,\beta}}I_T(\pi) \ge A.
\end{align*}
\end{lemma}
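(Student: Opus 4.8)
\textbf{Proof proposal for Lemma~\ref{lemb-3}.}

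The plan is to argue by contradiction, reducing the statement to the convergence of the hydrodynamic flow to $S$ together with a compactness argument for $I_{T}$. First, note that the second (``in particular'') assertion follows from the first. Indeed, $I$ is superadditive under concatenation: if $0<T_1<T$ and $\pi\in D([0,T],\mc M_+)$, then testing in the variational formula for $I_{T}$ against functions $G$ whose time-support lies in $[0,T_1]$, resp.\ $[T_1,T]$, gives $I_{T}(\pi)\ge I_{T_1}(\pi|_{[0,T_1]})+I_{T-T_1}(\pi|_{[T_1,T]})$ (after a time translation in the second term), and in particular $I_{T_1}(\pi|_{[0,T_1]})\le I_{T}(\pi)$; moreover a trajectory in $\mb D_{T,\beta}$ restricts to one in $\mb D_{T_1,\beta}$ on any subinterval. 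Hence $T\mapsto \inf_{\pi\in\mb D_{T,\beta}}I_{T}(\pi)$ is nondecreasing, and if it equals some $c>0$ for a value $T=T(\beta)$, then for $T'=\lceil A/c\rceil T$ every $\pi\in\mb D_{T',\beta}$ splits into $\lceil A/c\rceil$ pieces, each lying (after translation) in $\mb D_{T,\beta}$, so $I_{T'}(\pi)\ge \lceil A/c\rceil\,c\ge A$. Thus it suffices to produce $T=T(\beta)$ with $\inf_{\pi\in\mb D_{T,\beta}}I_{T}(\pi)>0$, equivalently to rule out that this infimum vanishes for every $T>0$.

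Suppose it does vanish for every $T$. Then $\mb D_{n,\beta}\ne\emptyset$ for each $n$ (otherwise the infimum is $+\infty$) and one may pick $\pi^{(n)}(t,d\theta)=\rho^{(n)}(t,\theta)d\theta\in\mb D_{n,\beta}$ with $I_{n}(\pi^{(n)})<1/n$. The PDE input is the gradient-flow structure of \eqref{hdleq}: with $V'=-F$, the free energy $\mc F(\rho)=\int_\T\left(\tfrac14|\nabla\rho(\theta)|^2+V(\rho(\theta))\right)d\theta$ satisfies $\tfrac{d}{dt}\mc F(\rho_t)=-\|\partial_t\rho_t\|_2^2\le 0$ along any weak solution; since $\mc F$ is bounded below, $\rho_t$ is smooth and precompact (in $C^1(\T)$, say) for $t\ge 1$ by parabolic smoothing, and the only $\omega$-limit points are elements of the compact set $S$, so $\mathrm{dist}_{L^2}(\rho_t,S)\to0$. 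Combining this with the compactness of $\mc M_{+,1}$ and the continuous dependence of $\rho_t$ ($t\ge 1$) on the weak topology of the initial datum yields a uniform relaxation time: there is $T_0=T_0(\beta)>0$ such that, for every measurable $\rho_0:\T\to[0,1]$, the weak solution with initial datum $\rho_0$ satisfies $\min_{\bar\rho\in S}\|\rho_{t}-\bar\rho\|_2\le\beta/2$ for some $t\in[0,T_0]$.

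Now restrict the $\pi^{(n)}$ (with $n\ge T_0$) to $[0,T_0]$; write $\tilde\pi^{(n)}:=\pi^{(n)}|_{[0,T_0]}$. Then $I_{T_0}(\tilde\pi^{(n)})\le I_n(\pi^{(n)})<1/n\to 0$, while $\min_{\bar\rho\in S}\|\rho^{(n)}(t,\cdot)-\bar\rho\|_2>\beta$ for all $t\in[0,T_0]$ since $\pi^{(n)}\in\mb D_{n,\beta}$. As $I_{T_0}$ has compact level sets and is lower semicontinuous, along a subsequence $\tilde\pi^{(n)}\to\pi^*$ in $D([0,T_0],\mc M_+)$ with $I_{T_0}(\pi^*)=0$; by Lemma~\ref{lemb-1}, $\pi^*(t,d\theta)=\rho^*(t,\theta)d\theta$ with $\rho^*$ the weak solution of \eqref{hdleq} for some initial datum in $\mc M_{+,1}$. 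By the uniform relaxation time, $\min_{\bar\rho\in S}\|\rho^*(t^*,\cdot)-\bar\rho\|_2\le\beta/2$ for some $t^*\in[0,T_0]$, so by continuity of $t\mapsto\rho^*(t,\cdot)$ in $L^2$ the quantity $t\mapsto\min_{\bar\rho\in S}\|\rho^*(t,\cdot)-\bar\rho\|_2$ is $<\beta$ on a subinterval $J\subset[0,T_0]$ of positive length. If one knew that $\rho^{(n)}\to\rho^*$ strongly in $L^2([0,T_0]\times\T)$ along a further subsequence, then (using that $\rho\mapsto\min_{\bar\rho\in S}\|\rho-\bar\rho\|_2$ is $1$-Lipschitz on $L^2(\T)$) one would obtain, for some $t\in J$ and all large $n$, $\min_{\bar\rho\in S}\|\rho^{(n)}(t,\cdot)-\bar\rho\|_2<\beta$, contradicting the displayed lower bound.

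The main obstacle is precisely this upgrade from Skorokhod convergence in the weak topology to strong $L^2$ convergence of the densities, which is needed because $\mb D_{T,\beta}$ is defined via the $L^2$ distance. This is where the dynamical rate function analysis of \cite{MR3765880,MR3947320} enters: $I_{T_0}(\tilde\pi^{(n)})<1$ forces a uniform energy bound $\sup_n\mc E_{T_0}(\tilde\pi^{(n)})<\infty$, hence a uniform bound on $\rho^{(n)}$ in $L^2([0,T_0];H^1(\T))$; together with a bound on $\partial_t\rho^{(n)}$ in $L^1([0,T_0];H^{-k}(\T))$ (for some $k$) coming from the controlled-dynamics representation of a finite-cost trajectory, an Aubin--Lions type argument (the embedding $H^1(\T)\hookrightarrow L^2(\T)$ being compact) yields the required strong $L^2([0,T_0]\times\T)$ convergence. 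Since all these ingredients, as well as the uniform relaxation time, are available in \cite{MR3765880,MR3947320}, the lemma is quoted from there; the outline above records the route.
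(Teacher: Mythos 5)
Your derivation of the second assertion from the first — via time-superadditivity of $I_T$ and the fact that a trajectory in $\mb D_{kT,\beta}$ restricts, after time translation, to $k$ trajectories in $\mb D_{T,\beta}$ — is exactly the ``direct consequence'' the paper has in mind, and it is correct. For the first assertion the paper gives no proof and simply cites \cite[Lemma 14]{MR3947320}, so the gap you honestly flag in your sketch (upgrading Skorokhod convergence in the weak topology to strong $L^2$ convergence of the densities, which the definition of $\mb D_{T,\beta}$ requires) is immaterial for the comparison: both you and the paper ultimately defer that part to the cited reference.
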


The first assertion of Lemma \ref{lemb-3} is proven in \cite[Lemma 14]{MR3947320}, whereas 
the second assertion is a direct consequence of the first one.

Recall the definitions of $\mc M_\sol(\alpha)$ and $\tilde H_N(\alpha)$, which are defined after \eqref{3-16}.

\begin{lemma}\cite[Lemma 21]{MR3947320}\label{lemb-2}
For each $\alpha>0$, there exist $T_0, C_0, N_0 >0$, depending on
$\alpha$, such that, for all $N\ge N_0$ and all $k\ge 1$,
\begin{equation*}
\sup_{\eta\in X_N} \bb P_{\eta}\left[\tilde H_N(\alpha) \ge kT_0\right] \le e^{ -k C_0 N}.
\end{equation*}
\end{lemma}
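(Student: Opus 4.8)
The plan is to reduce Lemma \ref{lemb-2} to a one-step estimate and then iterate.  Here $\tilde H_N(\alpha)$ is the first time the empirical measure $\pi_N(\eta^N_t)$ enters the $\alpha$-neighborhood $\mc M_\sol(\alpha)$ of the (compact) solution set $\mc M_\sol$.  By the strong Markov property, for any $T_0>0$ and any $k\ge1$,
\[
\sup_{\eta\in X_N}\bb P_\eta\!\left[\tilde H_N(\alpha)\ge kT_0\right]
\le\Big(\sup_{\eta\in X_N}\bb P_\eta\!\left[\tilde H_N(\alpha)\ge T_0\right]\Big)^{k},
\]
so it is enough to exhibit $T_0$, $C_0>0$ and $N_0$ (all depending only on $\alpha$) such that $\sup_{\eta\in X_N}\bb P_\eta[\tilde H_N(\alpha)\ge T_0]\le e^{-C_0N}$ for every $N\ge N_0$.

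For the one-step bound I would combine the dynamical large deviation upper bound with Lemma \ref{lemb-3}.  Fix $\beta=\alpha/6$, let $T_0=T(\beta)$ be the time produced by Lemma \ref{lemb-3}, and set $2C_0:=\inf_{\pi\in\mb D_{T_0,\beta}}I_{T_0}(\pi)>0$.  Let $\mf C\subset D([0,T_0],\mc M_+)$ be the set of trajectories that stay outside $\mc M_\sol(\alpha)$ throughout $[0,T_0)$; since the complementary event ``the trajectory enters the open set $\mc M_\sol(\alpha)$'' is open in the Skorokhod topology, $\mf C$ is closed.  Because the process is c\`adl\`ag and $\tilde H_N(\alpha)$ is the entrance time into $\mc M^N_\sol(\alpha)$, one has $\{\tilde H_N(\alpha)\ge T_0\}\subset\{\pi_N(\eta^N_\cdot)\in\mf C\}$.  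Now if $\pi\in\mf C$ has $I_{T_0}(\pi\,|\,\rho_0)<\infty$, then by Lemma \ref{lemb-5} $\pi$ lies in $C([0,T_0],\mc M_{+,1})$ with density $\rho(t,\cdot)$, and for every $t$ and every $\bar\rho\in S$ the constraint $d(\pi(t),\bar\rho\,d\theta)\ge\alpha$ combined with \eqref{2-17} gives $\|\rho(t,\cdot)-\bar\rho\|_2\ge\alpha/3>\beta$; hence $\pi\in\mb D_{T_0,\beta}$ and $I_{T_0}(\pi\,|\,\rho_0)\ge I_{T_0}(\pi)\ge 2C_0$.  Therefore $\inf_{\pi\in\mf C}I_{T_0}(\pi\,|\,\rho_0)\ge 2C_0$ for \emph{every} measurable $\rho_0:\T\to[0,1]$.

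To turn this into a bound uniform in the starting configuration, the plan is a compactness/contradiction argument.  Suppose $\sup_{\eta}\bb P_\eta[\tilde H_N(\alpha)\ge T_0]\ge e^{-C_0N}$ along a subsequence; pick maximizers $\eta^N$ along it, and, using the compactness of $\mc M_+$, pass to a further subsequence along which $\pi_N(\eta^N)$ converges to some $\varrho\in\mc M_+$.  Since $\langle\pi_N(\eta),G\rangle\le N^{-1}\sum_{x}G(x/N)\to\int_\T G\,d\theta$ for every nonnegative $G\in C(\T)$, the limit $\varrho$ is absolutely continuous with density $\rho_0\le1$, so the chosen configurations are associated with $\rho_0$, and Theorem \ref{thm2-2} applied to the closed set $\mf C$ yields
\[
\limsup_{N\to\infty}\frac1N\log\bb P_{\eta^N}\!\left[\tilde H_N(\alpha)\ge T_0\right]
\le-\inf_{\pi\in\mf C}I_{T_0}(\pi\,|\,\rho_0)\le-2C_0<-C_0,
\]
contradicting the choice of $\eta^N$.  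Hence $\sup_{\eta}\bb P_\eta[\tilde H_N(\alpha)\ge T_0]\le e^{-C_0N}$ for all $N$ large, and the iteration from the first paragraph then gives Lemma \ref{lemb-2}.

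I expect the delicate point to be the one-step estimate rather than the iteration.  One must check that $\mf C$ is genuinely closed in the Skorokhod space, that weak limits of empirical measures of configurations are absolutely continuous (so that Theorem \ref{thm2-2} is applicable along the extracted subsequence), and---above all---that Lemma \ref{lemb-3} really supplies a \emph{single finite} $T_0$ after which every trajectory of finite action that has stayed $\beta$-far from $S$ throughout $[0,T_0]$ has action at least $2C_0>0$.  This last point is where the gradient (free-energy) structure of the reaction--diffusion equation, and the resulting uniform attraction of its solutions to $\mc M_\sol$, enters, and it is the real mechanism behind the lemma.
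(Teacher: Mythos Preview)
Your proposal is correct and follows the standard route. In this paper the lemma is not proved but simply quoted from \cite[Lemma~21]{MR3947320}; the argument there is exactly what you outline: iterate via the Markov property to reduce to a one-step bound, and obtain that bound by applying the dynamical large-deviation upper bound (Theorem~\ref{thm2-2}) to the closed set of trajectories that avoid $\mc M_\sol(\alpha)$ on $[0,T_0]$, invoking Lemma~\ref{lemb-3} (together with \eqref{2-17} to pass from the metric $d$ to the $L^2$-distance) for the uniform cost lower bound, and extracting a convergent subsequence in the compact space $\mc M_+$ to make the estimate uniform over starting configurations.
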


\begin{lemma}\cite[Lemma 30]{MR3947320}\label{lemb-4}
For each $i=1,\ldots,\ell$ and each $\alpha>0$, we have
\begin{equation*}
\inf \left\{ V_i(\varrho): \varrho \not\in \mc
B(\alpha;\bar\varrho_i) \right\} > 0.
\end{equation*}
\end{lemma}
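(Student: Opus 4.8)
The plan is to argue by contradiction, along the lines of the Freidlin--Wentzell analysis of the quasi-potential of an asymptotically stable equilibrium (cf.\ \cite[Chapter 4]{MR1652127}). Since $\alpha\mapsto\inf\{V_i(\varrho):\varrho\notin\mc B(\alpha;\bar\varrho_i)\}$ is nondecreasing, it suffices to treat small $\alpha$, so I fix $\alpha<\tfrac{8}{9}g_i$, where $g_i:=\inf\{d(\bar\varrho_i,\bar\varrho):\bar\varrho\in\mc M_\sol,\ \bar\varrho\neq\bar\varrho_i\}>0$ by Corollary \ref{cora-2}. I then fix, \emph{in this order}: by Lemma \ref{lema-3} with $\e=\alpha/2$ (and shrinking the radius it provides), a number $\delta\in(0,\alpha/4)$ such that every weak solution of \eqref{hdleq} issued from $\mc B(\delta;\bar\varrho_i)$ stays in $\mc B(\alpha/2;\bar\varrho_i)$ for all times; and by Lemma \ref{lemb-3} with $\beta=\delta/6$ and $A=1$, a time $T^*>0$ with $\inf_{\pi\in\mb D_{T^*,\delta/6}}I_{T^*}(\pi)\ge1$. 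Set $\hat T:=T^*+2$; all of $\delta, T^*, \hat T$ depend only on $\alpha$ and $i$.

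Suppose the assertion fails: there are $\varrho_n\notin\mc B(\alpha;\bar\varrho_i)$ with $V_i(\varrho_n)\to0$, hence $T_n>0$ and $\pi_n\in D([0,T_n],\mc M_+)$ with $\pi_n(0)=\bar\varrho_i$, $\pi_n(T_n)=\varrho_n$ and $I_{T_n}(\pi_n|\bar\rho_i)\to0$. By Lemma \ref{lemb-5} each $\pi_n$ is continuous with values in $\mc M_{+,1}$; replacing $T_n$ by the first time at which $d(\pi_n(t),\bar\varrho_i)=\alpha$ and using that restriction to a subinterval does not increase the rate function (cf.\ \eqref{3-19}), I may assume $d(\pi_n(t),\bar\varrho_i)\le\alpha$ on $[0,T_n]$ and $d(\pi_n(T_n),\bar\varrho_i)=\alpha$. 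Write $\rho_n(t,\cdot)$ for the density of $\pi_n(t)$ and set $\sigma_n:=\sup\{t\in[0,T_n]:\pi_n(t)\in\mc B[\delta/2;\bar\varrho_i]\}$, which lies in $[0,T_n)$ since $0$ is in the set and $\alpha>\delta/2$. For $t\in(\sigma_n,T_n]$ one has $d(\pi_n(t),\bar\varrho_i)>\delta/2$, hence $\|\rho_n(t,\cdot)-\bar\rho_i\|_2>\delta/6$ by \eqref{2-17}; and if $\|\rho_n(t,\cdot)-\bar\rho\|_2\le\delta/6$ for some $\bar\rho\in S$, then $d(\rho_n(t,\cdot),\bar\rho)\le\delta/2$, so $d(\bar\rho,\bar\varrho_i)\le\delta/2+\alpha<g_i$ would force $\bar\rho=\bar\rho_i$, a contradiction. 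Thus $\pi_n(t)$ stays at $L^2$-distance $>\delta/6$ from every element of $S$ throughout $(\sigma_n,T_n]$.

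Consequently, if $T_n-\sigma_n\ge1+T^*$, the time-shifted restriction of $\pi_n$ to $[\sigma_n+1,\sigma_n+1+T^*]$ lies in $\mb D_{T^*,\delta/6}$, so its $I_{T^*}$-cost is at least $1$, which by the restriction property contradicts $I_{T_n}(\pi_n|\bar\rho_i)\to0$. Hence $T_n-\sigma_n<1+T^*$ for all large $n$. Choosing $s_n\in[0,T_n]$ with $\pi_n(s_n)\in\mc B[\delta/2;\bar\varrho_i]$ and $\sigma_n-s_n\le1$ (possible by definition of $\sigma_n$), one gets $t_n:=T_n-s_n<\hat T$. Let $\hat\pi_n\in D([0,\hat T],\mc M_+)$ be the concatenation of the time-shifted restriction $\pi_n|_{[s_n,T_n]}$ on $[0,t_n]$ with the weak solution of \eqref{hdleq} issued from $\varrho_n$ on $[t_n,\hat T]$; the latter has zero cost by Lemma \ref{lemb-1}, so $I_{\hat T}(\hat\pi_n)\le I_{T_n}(\pi_n|\bar\rho_i)\to0$, while $\hat\pi_n(0)\in\mc B[\delta/2;\bar\varrho_i]$ and $\hat\pi_n(t_n)=\varrho_n$. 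Since $I_{\hat T}$ is lower semicontinuous with compact level sets, along a subsequence $\hat\pi_n\to\hat\pi$ in $D([0,\hat T],\mc M_+)$ with $I_{\hat T}(\hat\pi)=0$, and $t_n\to t^*\in[0,\hat T]$; by Lemma \ref{lemb-5} $\hat\pi$ is continuous, so the convergence is uniform and $\hat\pi(0)=\lim_n\hat\pi_n(0)\in\mc B[\delta/2;\bar\varrho_i]$, $d(\hat\pi(t^*),\bar\varrho_i)=\lim_n d(\varrho_n,\bar\varrho_i)=\alpha$. On the other hand, by Lemma \ref{lemb-1} the density of $\hat\pi$ solves \eqref{hdleq}, and since $\hat\pi(0)\in\mc B(\delta;\bar\varrho_i)$ the choice of $\delta$ forces $\hat\pi(t)\in\mc B(\alpha/2;\bar\varrho_i)$ for all $t\ge0$; in particular $d(\hat\pi(t^*),\bar\varrho_i)<\alpha/2$, a contradiction. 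This proves the lemma for small $\alpha$, and the general case follows by monotonicity in $\alpha$.

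The main obstacle is that the time horizons $T_n$ of the near-optimal trajectories cannot be assumed bounded, so the compactness of the level sets of a single $I_T$ is not directly available. The remedy is to discard the final excursion of $\pi_n$ out of the small ball $\mc B[\delta/2;\bar\varrho_i]$ — whose duration is controlled by Lemma \ref{lemb-3}, the one place where Corollary \ref{cora-2} is used, to turn ``$d$-far from $\bar\varrho_i$'' into ``$L^2$-far from every stationary profile'' as required by $\mb D_{T^*,\delta/6}$ — and then to reattach a zero-cost hydrodynamic tail so that everything lives on the fixed interval $[0,\hat T]$. The only delicate point is to fix the constants in the order $\delta$, then $T^*$, then $\hat T$, so that no circular dependence arises.
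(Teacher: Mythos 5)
The paper does not prove this lemma: it is quoted verbatim from \cite[Lemma 30]{MR3947320}, so there is no internal proof to compare against. Your argument is correct, and it is the standard Freidlin--Wentzell scheme that the cited reference follows, assembled from exactly the ingredients this paper collects in the appendices: Corollary \ref{cora-2} and \eqref{2-17} to convert ``$d$-far from $\bar\varrho_i$'' into ``$L^2$-far from all of $S$'', Lemma \ref{lemb-3} to bound the duration of the final excursion, concatenation with a zero-cost hydrodynamic tail (Lemma \ref{lemb-1}) to confine everything to the fixed horizon $[0,\hat T]$, compactness of level sets and lower semicontinuity of $I_{\hat T}$ to extract a zero-cost limit, and Lemma \ref{lema-3} to see that such a limit cannot leave $\mc B(\alpha/2;\bar\varrho_i)$. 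Structurally this is the same truncate--reattach--compactify argument the paper itself runs in the proof of Lemma \ref{lem3-3}, and your ordering of the constants $\delta$, $T^*$, $\hat T$ avoids any circularity. The only steps you use implicitly are the restriction/concatenation (super)additivity and time-shift invariance of $I_T$, and the fact that Skorokhod convergence to a continuous limit is uniform; both are standard and are invoked without comment in Section \ref{sec3} of the paper, so I see no gap.
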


\end{document}